\theoremstyle{plain}
\newtheorem{theorem}{Theorem}[section]
\newtheorem{corollary}[theorem]{Corollary}
\newtheorem*{corollary*}{Corollary}
\newtheorem{prop}[theorem]{Proposition}
\newtheorem{lemma}[theorem]{Lemma}
\newtheorem*{proposition*}{Proposition}
\newtheorem*{theorem*}{Theorem}
\newtheorem*{lemma*}{Lemma}
\newtheorem*{claim*}{Claim}
\theoremstyle{definition}
\newtheorem{Cons}[theorem]{Construction}
\newtheorem{definition}[theorem]{Definition}
\newtheorem*{definition*}{Definition}
\newtheorem{remark}[theorem]{Remark}
\newtheorem*{obs*}{Observation}
\def\ZZ{\mathbb{Z}}
\newcommand{\ora}[1]{\overrightarrow{#1}}
\newcommand{\ol}[1]{\overline{#1}}
\newcommand{\eq}[1]{\begin{equation}\label{eq:#1}}
\newcommand{\eqe}{\end{equation}}
\newcommand{\net}{\mathfrak{F}}
\def\C{\mathcal{C}}
\def\D{\mathcal{D}}
\def\F{\mathcal{F}}
\def\G{\Gamma}
\def\F{\mathcal{F}}
\def\M{\mathcal{M}}
\def\R{\mathcal{R}}
\def\Y{\mathcal{Y}}
\def\RR{\mathbb{R}}
\def\ZZ{\mathbb{Z}}
\def\vr{\varphi}
\def\O{\mathcal{O}}
\def\ge{\geqslant}
\def\geq{\geqslant}
\def\<{\langle}
\def\>{\rangle}
\def\0{\textbf{0}}
\def\col{\colon}
\DeclareMathOperator{\Aut}{Aut}
\DeclareMathOperator{\Div}{Div}
\DeclareMathOperator{\divi}{div}
\DeclareMathOperator{\rank}{rank}
\DeclareMathOperator{\trop}{trop}
\DeclareMathOperator{\val}{val}
\DeclareMathOperator{\roots}{Root}
\DeclareMathOperator{\Eff}{Eff}
\DeclareMathOperator{\Spec}{Spec}
\DeclareMathOperator{\sing}{sing}
\DeclareMathOperator{\mon}{Mon}
\DeclareMathOperator{\Supp}{Supp}
\DeclareMathOperator{\Hom}{Hom}
\DeclareMathOperator{\eff}{Eff}
\DeclareMathOperator{\Roots}{Root}
\DeclareMathOperator{\an}{an}
\DeclareMathOperator{\et}{et}
\newcommand{\flow}{\mathbf{F}}
\newcommand{\flowp}{\mathcal{F}}
\newcommand{\lra}{\longrightarrow}
\newcommand{\ra}{\rightarrow}
\tikzset{snake it/.style={decorate, decoration=snake}}
\begin{document}

\setcounter{tocdepth}{1}

\title[On moduli spaces of roots in algebraic and tropical geometry]{On moduli spaces of roots in algebraic and tropical geometry} 
\author{Alex Abreu}
\author{Marco Pacini}
\author{Matheus Secco}

\thanks{The second author is supported by CNPq, bolsa PQ 301671/2019-2, and Faperj CNE E-26/200.845/2021 (259616)}
\thanks{The work was partially done while the third author was affiliated with Institute of Computer Science of the Czech Academy of Sciences, with institutional support RVO:67985807}

\email{}

\maketitle

\begin{abstract}
In this paper we construct a tropical moduli space parametrizing roots of divisors on tropical curves. We study the relation between this space and the skeleton of Jarvis moduli space of nets of limit roots on stable curves. We show that the combinatorics of the moduli space of tropical roots is governed by the poset of flows, a poset parametrizing certain flows on graphs. 
\end{abstract}

\bigskip

MSC (2020): 14T05, 14H10, 14H40.

Key words: Flow, root of divisor, net of limit roots, moduli space. 

\tableofcontents

\section{Introduction}

The moduli space of $r$-roots of a universal invertible sheaf on  smooth $n$-pointed curves is a natural unramified covering of $\mathcal M_{g,n}$. The construction of a  compactification of this space over $\overline{\mathcal M}_{g,n}$ has attracted a lot of interest in the last three decades and has found several applications. To our knowledge, the first such  compactification was given by Cornalba in \cite{CornalbaTheta}. He constructed the moduli space of spin curves, which takes care of the case where $r=2$ and the invertible sheaf is the relative canonical sheaf. This was continued with the work of Jarvis \cite{Jarvis} and \cite{JarvisNet}, containing a construction of a compactification for any $r$ and any sheaf via torsion-free rank-$1$  sheaves. Then Abramovich and Jarvis \cite{AJ} and  Abramovich, Corti, Vistoli \cite{ACV} solved the problem for any $r$ using twisted bundles, and Caporaso, Casagrande and Cornalba \cite{CCC} construced a compactification for any $r$ via invertible sheaves on semistable curves. Finally, Chiodo \cite{Chiodo} constructed a compatification for any $r$ via twisted curves with the property that the natural map to (a suitable modification of)  $\overline{\M}_{g,n}$ is unramified. The compactifications are a fundamental tool in the computation of the double ramification cycle (see for example \cite{JPPZ}). For a recent generalization of the work of Chiodo and Jarvis with applications to the double ramification cycle, see \cite{HO}.  

In this paper, we mainly consider the Jarvis compactification \cite{JarvisNet}. This is the space paramatrizing nets of $r$-limit roots, and it is a normalization of the space constructed by Jarvis in \cite{Jarvis}, or by Caporaso, Casagrande and Cornalba in \cite{CCC} (the last two spaces are isomorphic). 

Since the work of \cite{ACP}, it is natural to construct tropical analogues of moduli spaces arising in algebraic geometry. These tropical analogues encode the combinatorial data of the objets parametrized by the algebro-geometric spaces. In some cases they even recover topological information from the algebro-geometric space, as in \cite{CGP}.
The tropical spaces are often isomorphic to the skeleton (in the sense of Berkovich) of the analytification of these moduli spaces. The relation between algebraic geometry and tropical geometry is made more concrete with the work of \cite{CCUW} that constructs Artin stacks from tropical moduli spaces.
 Several such tropical analogues have been constructed:  the moduli space of tropical curves in \cite{Mikh} and \cite{BMV}, the  moduli space of tropical admissible covers in \cite{CHMR}, the universal tropical Jacobian in \cite{APPLMS} and \cite{MMUV}, and the moduli space of tropical spin curves in \cite{CMP}.

In this paper we consider roots of divisors on tropical curves. The first description of theta characteristics of tropical curves, namely square roots of the canonical divisor of a tropical curve, was given in \cite{MZ} and \cite{Zharkov}. Our first goal is to describe all $r$-roots of any divisor on tropical curves. Our construction is compatible with specializations of graphs. This allows us to construct a tropical analogue of the space of $r$-roots which is the target of a tropicalization map from the analytification of the Jarvis moduli space.
This space is a generalized cone complex over the moduli space of tropical curves and parametrizes roots of a fixed universal divisor on tropical curves. It is not isomorphic to the skeleton of its algebro-geometric counterpart, but it is the target of a natural map from the skeleton that commutes with the tropicalization map from the analytification of Jarvis space. In fact, to fully describe the skeleton of the space of $r$-roots of an invertible sheaf, an extra-datum is necessary. This is done in \cite{CMP} in the case of spin curves, where the extra-datum is a sign function, encoding the parity of the space of sections of the invertible sheaves. Other special cases should be doable, as for instance the case of the moduli space of Prym varieties. The general case seems quite difficult and  challenging.

\subsection{The results}

The main combinatorial part of this article concerns graphs and flows on graphs. Let $g,n$ be nonnegative integers satifying $2g-2+n > 0$.  As usual we denote by $\mathcal G_{g,n}$ the poset of stable genus-$g$ graphs with $n$ legs.  Given a ramification sequence $\R=(a_1,\ldots, a_n,\ell)$ with elements in an abelian group $A$, we construct a poset $\flowp_{g}(A,\R)$, which we call the \emph{poset of flows}, consisting of pairs $(\Gamma,\vr)$, where $\Gamma$ is a genus-$g$ stable graph with $n$-legs incident to the vertices $v_1,\dots,v_n$ of $\G$, and $\vr$ is a flow on $\Gamma$ with values in $A$ whose associated divisor  $\divi(\vr)$ satisfies the condition  $\divi(\vr)=a_1v_1+\ldots+a_nv_n+\ell K_{\Gamma}$ (here $K_{\G}$ is the canonical divisor of $\G$).

\begin{theorem*}[Theorems \ref{thm:main1}]
The poset $\flowp_{g}(A,\R)$ satisfies the following properties.
\begin{enumerate}[(i)]
    \item It has a unique minimal element which is the pair $(\G,\varphi)$ where $\G$ has only one vertex with weight $g$ and no edges, and $\varphi$ is the trivial flow.
    \item Its maximal elements  are the pairs $(\G,\varphi)$, where $\G$ is a $3$-regular stable graph with $n$-legs.
    \item It is connected and ranked of dimension $3g-3+n$, where the rank of a pair $(\G,\varphi)$ is given by the number of edges of $\G$.
    \item There is a forgetful surjective map of ranked posets $\flowp_{g}(A,\R)\to \mathcal G_{g,n}$. %$\flowp_{g}(A,\R)\to \mathcal G_{g,n}$ taking  $(\G,\varphi)$ to $\G$.
    For $\R=(0,\ldots, 0)$, the fiber over the class of a graph $\Gamma$ is isomorphic to $H_1(\Gamma,A)/\Aut(\Gamma)$.
    % \item If $(\Gamma,\varphi)$ and $(\Gamma',\varphi')$ are pairs in $\mathcal F_g(A,\mathcal R)$ such that $\Gamma$ and $\Gamma'$ have the same number of edges, then they are linked in $\mathcal F_g(A,\mathcal R)$. 
\end{enumerate}
\end{theorem*}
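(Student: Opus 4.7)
The plan is to exploit the poset structure of stable graphs $\mathcal{G}_{g,n}$ under edge contraction and to track how flows lift and project. Order $\flowp_g(A,\R)$ so that $(\Gamma',\varphi')\ge(\Gamma,\varphi)$ when $\Gamma$ is obtained from $\Gamma'$ by contracting edges and $\varphi$ is the pushforward of $\varphi'$. The key computation, which drives every part of the theorem, is that if $\Gamma'$ is obtained from $\Gamma$ by splitting a vertex $v$ of weight $g_v$ and degree $d_v$ into $v_1,v_2$ joined by a new edge $e$, then
\[
K_{\Gamma'}(v_1)+K_{\Gamma'}(v_2)=2(g_{v_1}+g_{v_2})-4+(d_{v_1}+d_{v_2})=2g_v-2+d_v=K_\Gamma(v),
\]
and the legs at $v$ distribute between $v_1,v_2$. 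Hence the prescribed divisor values at $v_1$ and $v_2$ sum to $\divi(\varphi)(v)$, and the single unknown $\varphi'(e)\in A$ is determined uniquely by the divisor equation at $v_1$ (the equation at $v_2$ then holds automatically). Every stable vertex splitting of $\Gamma$ therefore lifts uniquely to a cover in $\flowp_g(A,\R)$.

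For (i), the degree compatibility $\sum a_i+\ell(2g-2+n)=0$ in $A$ (implicit for the poset to be nonempty) ensures that the trivial flow on the single-vertex graph $\Gamma_0$ of weight $g$ with $n$ legs lies in $\flowp_g(A,\R)$; as $\Gamma_0$ is the unique stable graph without edges, $(\Gamma_0,0)$ is the unique minimum. For (ii), a pair $(\Gamma,\varphi)$ is maximal iff no vertex of $\Gamma$ admits a non-trivial stable split, i.e., iff every vertex has weight $0$ and combinatorial valence exactly $3$; by the lifting analysis, whenever such a combinatorial refinement exists it lifts to $\flowp_g(A,\R)$, so maximal elements are exactly the $3$-regular stable graphs. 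For (iii), cover relations in $\flowp_g(A,\R)$ correspond bijectively to single edge splittings, so the rank of $(\Gamma,\varphi)$ is the number of edges of $\Gamma$; any $3$-regular stable graph of genus $g$ with $n$ legs has $3g-3+n$ edges, giving the dimension; and connectedness follows because every element admits a descending chain to $(\Gamma_0,0)$.

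For (iv), surjectivity of the forgetful map amounts to realizing $D=\sum a_iv_i+\ell K_\Gamma$, of total degree $\sum a_i+\ell(2g-2+n)=0$, as $\divi(\varphi)$ for some flow $\varphi$; this follows from exactness of
\[
C_1(\Gamma,A)\xrightarrow{\divi}C_0(\Gamma,A)\xrightarrow{\mathrm{aug}} A\to 0
\]
for any connected graph $\Gamma$. The set of flows on $\Gamma$ with fixed divisor is a torsor over $\ker(\divi)=H_1(\Gamma,A)$; when $\R=(0,\ldots,0)$ the trivial flow is a canonical base point, and the fiber over the isomorphism class of $\Gamma$ is the quotient by the natural $\Aut(\Gamma)$-action, yielding $H_1(\Gamma,A)/\Aut(\Gamma)$. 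The main delicate point is pinning down the definition of covers in $\flowp_g(A,\R)$ and verifying that they correspond to single combinatorial edge splittings (as opposed to covers forcing simultaneous splits, which could in principle arise if the flow could not be lifted stepwise); once the stepwise lifting is established, everything else is bookkeeping from the divisor/flow exact sequence on graphs.
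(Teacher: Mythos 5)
Your proposal follows essentially the same route as the paper: order by edge contraction with pushforward of flows, lift flows one un-contraction at a time using compatibility of the prescribed divisor with contraction (the paper's Proposition \ref{prop:surjective} together with Remark \ref{rem:push-flow1}), inherit rankedness from $\mathcal G_{g,n}$ by pushing the flow forward to an interpolating graph, and describe the fiber in (iv) as an $H_1(\Gamma,A)$-torsor with the trivial flow as base point when $\R=(0,\dots,0)$. Two points need repair. First, a small one: the degree of $D_{\R,\Gamma}$ is $\sum a_i+\ell(2g-2)$, not $\sum a_i+\ell(2g-2+n)$; the legs enter only through the $a_i$, and the weighted canonical divisor $v\mapsto 2w(v)+\val(v)-2$ has degree $2g-2$. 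Second, and more substantively for item (ii): the equivalence you assert, ``maximal iff no vertex admits a non-trivial stable split iff every vertex has weight $0$ and valence $3$,'' is false as stated. A vertex of weight $1$ and valence $1$ admits no stable split into two vertices joined by an edge (the weight-$0$ piece could have valence at most $2$), yet such a graph is not $3$-regular. Reaching a $3$-regular graph from an arbitrary stable graph also requires un-contractions that create loops, i.e.\ trade vertex weight for first Betti number, and these moves are not covered by your key computation, which only treats vertex splits. The fix is short — on a new loop the two orientations cancel in $\divi(\varphi)$, so any value of the flow on the loop yields a lift (existence without uniqueness, exactly the case $b_1(\G)>b_1(\G')$ of Proposition \ref{prop:surjective}) — but it must be said, since otherwise the argument that every element of $\flowp_g(A,\R)$ is dominated by a pair over a $3$-regular graph is incomplete.
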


% The poset of flows is not enough to give a combinatorial description of the stratification of the moduli space of roots of invertible sheaves. Hence, as mentioned before, it is not enough to provide the combinatorial data needed to describe the tropicalization of this space. As explained in \cite{CMP}, in the case of theta characteristics, that is when the ramification sequence is $\R=(0,\ldots, 0,1)$ and the group is $A=\ZZ/2\ZZ$, this can be fixed by enriching the pairs $(\Gamma,\vr)$ with a sign function.  A sign function captures the combinatorial datum of the parity of the dimension of the space of sections of a theta characteristic on a singular curve. This gives rise to the poset $\spin_{g,n}$ of the terns $(\Gamma,\varphi,s)$, where $(\Gamma,\varphi)$ is in the poset $\mathcal F_g(A,\mathcal R)$, and $s$ is a sign function of $(\Gamma,\varphi)$. The poset $\spin_{g,n}$ is the poset underlying the moduli space of spin tropical curves $S_{g,n}^{\trop}$ which is the tropicalization (in the sense of Berkovich) of the moduli space  $\overline{\mathcal S}_{g,n}$ of stable spin curves (see \cite[Theorem C]{CMP}). This poset has two components, $\spin_{g,n}^+$ and $\spin_{g,n}^-$, defined by fixing the parity of a tern $(\Gamma,\varphi,s)$ (see Definition \ref{def:total-sign}). In \cite[Theorem A]{CMP} it is shown that this two components are connected. We refine this result.

% \begin{theorem*}[Theorem \ref{thm:mainspin}]
% The posets $\spin_{g,n}^+$ and $\spin_{g,n}^-$ are connected through codimension one.
% \end{theorem*}

In fact, the poset $\flowp_{g}(A,\R)$ is connected through codimension one: we will prove this fact in the forthcoming paper \cite{APS2}.

The poset of flows $\flowp_{g}(A,\R)$ is the poset underlying the tropical moduli space parametrizing roots of divisors on tropical curves. For a ramification sequence $\R=(a_1,\ldots, a_n,\ell)$ and a $n$-pointed stable genus-$g$ tropical curve $X$, we write $\D_{X,\R}:=a_1p_1+\ldots +a_np_n+\ell K_X$, where $p_i$ are the marked points on $X$ and $K_X$ is the canonical divisor of $X$.

Let $\Roots_{g}^{r,\trop}(A,\R)$ be the generalized cone complex defined as 
\[
\Roots_{g}^{r,\trop}(A,\R)=\bigsqcup \mathbb{R}^{E(\Gamma)}_{>0}/\Aut(\Gamma,\vr),
\]
where the union is taken over pairs $(\G, \vr)$ in the poset $\flowp_g(A/rA,\overline{\mathcal R})$ (here, $\overline{\mathcal R}$ is the ramification sequence $\R$ with entries  taken modulo $A$).
There is a natural forgetful map
\[
\pi_{g,\R}^{\trop}\col \Roots^{r,\trop}_{g}(A,\R)\to M^{\trop}_{g,n}.
\]
\begin{theorem*}[Theorem \ref{thm:main-moduliroot}]
   The following properties hold.
   \begin{enumerate}[(i)]
       \item The points of $\Roots^{r,\trop}_g(A,\R)$ parametrize pairs $(X,[\D])$, where $X$ is a genus-$g$ stable tropical curve with $n$ legs, and $\D$ is an $r$-root of $\D_{X,\R}$. 
       \item The map $\pi_{g,\mathcal R}^{\trop}$ is a map of generalized cone complexes  whose fiber over the class of a tropical curve $X$ in $M_{g,n}^{\trop}$ is in bijection with the set of $r$-roots of $\D_{X,\R}$, modulo $\Aut(X)$.
   \end{enumerate}
  \end{theorem*}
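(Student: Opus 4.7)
The plan is to reduce the theorem to two compatibilities: first, an identification, for a fixed combinatorial type, of the set of $r$-roots of $\D_{X,\R}$ on a tropical curve $X$ with underlying graph $\Gamma$ with a certain set of flows on $\Gamma$; and second, a verification that this identification is equivariant under graph automorphisms and natural under edge contractions.

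Fix a stable graph $\Gamma$ with $n$ legs and an edge-length vector $\vec{\ell}\in \mathbb{R}^{E(\Gamma)}_{>0}$, determining a tropical curve $X$ with marked vertices $v_1,\dots,v_n$. The key lemma I would prove is that the set of divisor classes $[\D]$ on $X$ satisfying $r[\D]=[\D_{X,\R}]$ is in canonical bijection with the set of flows $\vr$ on $\Gamma$ valued in $A/rA$ with $\divi(\vr)=a_1v_1+\cdots+a_nv_n+\ell K_\Gamma$, that is, with the fiber of the forgetful map $\flowp_g(A/rA,\overline{\R})\to \mathcal{G}_{g,n}$ over the class of $\Gamma$. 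To construct the bijection I would represent any root $[\D]$ by a divisor supported on vertices of $\Gamma$ and rewrite $r\D -\D_{X,\R}=\divi(f)$ for a piecewise linear function $f$ on $X$; the collection of slopes of $f$ along oriented edges of $\Gamma$ gives integers well-defined modulo $r$ that assemble into the desired flow $\vr$. The inverse direction reconstructs a representative divisor edge by edge from a given flow, in the spirit of the description of theta characteristics in \cite{MZ,Zharkov}. Both sides are torsors under $H_1(\Gamma,A/rA)$, which makes the existence of such a canonical bijection plausible; the content is that the proposed map is indeed equivariant for the torsor action.

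Given the key lemma, part (i) follows by assembly. The bijection is $\Aut(\Gamma)$-equivariant, so pairs $(X,[\D])$ of fixed combinatorial type $(\Gamma,\vr)$ are parametrized by $\mathbb{R}^{E(\Gamma)}_{>0}/\Aut(\Gamma,\vr)$; edge contractions on the tropical side correspond to the face relations in the poset $\flowp_g(A/rA,\overline{\R})$ by construction of that poset, gluing the individual cones into a generalized cone complex whose points are exactly the announced pairs.

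For part (ii), the forgetful map of posets $\flowp_g(A/rA,\overline{\R})\to \mathcal{G}_{g,n}$ from Theorem \ref{thm:main1}(iv) sends each cone $\mathbb{R}^{E(\Gamma)}_{>0}/\Aut(\Gamma,\vr)$ surjectively to $\mathbb{R}^{E(\Gamma)}_{>0}/\Aut(\Gamma)$ via the inclusion $\Aut(\Gamma,\vr)\subset \Aut(\Gamma)$, and the assembly is a morphism of generalized cone complexes $\pi_{g,\R}^{\trop}\col \Roots^{r,\trop}_g(A,\R)\to M^{\trop}_{g,n}$. By part (i), the fiber over the class of $X$ is in bijection with the set of $r$-roots of $\D_{X,\R}$ modulo $\Aut(X)$. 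The main obstacle will be the proof of the key lemma, which requires passing between the continuous divisor-theoretic description of roots on the metric graph $X$ and the discrete combinatorial description as flows on $\Gamma$ while handling the ramification data $\R$ and the torsion coefficients $A/rA$ coherently; once this is in place, the remainder is a formal consequence of Theorem \ref{thm:main1} and the standard cone-complex formalism of \cite{ACP}.
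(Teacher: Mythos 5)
Your proposal follows essentially the same route as the paper: the ``key lemma'' you isolate is exactly the bijection $\Delta\colon \flowp(\Gamma,\overline{D})\to\roots^{r,\trop}(X,\D_{\R,X})$ with inverse $\Phi$ given by reducing the slopes of $f$ (where $r\D-\D_{X,\R}=\divi(f)$) modulo $r$, which the paper establishes separately as Theorem \ref{thm:Delta_Phi} and then cites, and the assembly over the poset $\flowp_g(A/rA,\overline{\R})$ and the fiber description modulo $\Aut(X)$ proceed as you indicate. The only detail you elide is that $\pi^{\trop}_{g,\R}$ rescales each coordinate by $|r/\gcd(r,\overline{\vr}(e))|$ rather than being the identity on cones, but since that rescaling is a homeomorphism of each open cone it does not affect either the parametrization in (i) or the fiber identification in (ii).
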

 
 The bijection in item (ii) of the above theorem is explicit and given in Definition \ref{def:divisor_flow}. This allow us to give an explicit one-to-one correspondence between flows in $\flowp_g(A/rA,\overline{\mathcal R})$ and roots of $\D_{X,\R}$, see Theorem \ref{thm:Delta_Phi}.

Next, we come back to algebraic geometry. Let $\overline{\M}_{g,n}^r(\R)$ be the Jarvis  moduli space of nets of $r$-limit roots and $\overline{\Sigma}(\overline{\M}_{g,n}^r(\R)^{\an})$ be its skeleton (in the sense of Berkovich). Set $\Roots^{r,\trop}_g(\R):=\Roots^{r,\trop}_g(\mathbb Z,\R)$.
The natural compactification  $\overline{\Roots}^{r,\trop}_{g}(\R)$ of $\Roots^{r,\trop}_g(\R)$ is related to the analytication $\overline{\M}_{g,n}^r(\R)^{\an}$ of the space $\overline{\M}_{g,n}^r(\R)$ via a tropicalization map (see Equation \ref{eq:tropmap})
\[
\trop_{\ol{\M}_{g,n}(\R)}\col \overline{\M}_{g,n}^r(\R)^{\an}\to \overline{\Roots}^{r,\trop}_g(\R),
\]
which factors through a map $\Phi_{\ol{\M}_{g,n}^{r}(\R)}\col \overline{\Sigma}(\overline{\M}_{g,n}^r(\R))\to \overline{\Roots}^{r,\trop}_g(\R)$.

\begin{theorem*}[Theorem \ref{thm:tropj}]
There is a morphism of extended generalized cone complexes
\[
\Phi_{\ol{\M}_{g,n}^{r}(\R)}\col \ol{\Sigma}(\ol{\M}_{g}^{r}(A,\R))\to \ol{\Roots}_{g}^{r,\trop}(\R)
\]
which makes the following diagram commute
\begin{eqnarray*}
\SelectTips{cm}{11}
\begin{xy} <16pt,0pt>:
\xymatrix{
\ol{\M}_{g,n}^r(\R)^{\an} \ar@/^2pc/[rrrr]^{\trop_{\ol{\M}_{g,n}^r(\R)}} \ar[d]_{\pi^{\an}_{g,\mathcal R}} \ar[rr]^{{\bf p}_{\ol{\M}_{g,n}^r(\R)}\;}   
  && \ar[rr]^{{\Phi}_{\ol{\M}_{g,n}^r(\R)}} \ar[d]_{\ol{\Sigma}(\pi^{\an}_{g,\mathcal R})} \ol{\Sigma}(\ol{\M}_{g,n}^r(\R)) && \ol{\Roots}_{g}^{r,\trop}(\R)   \ar[d]_{\pi^{\trop}_{g,\mathcal R}} \\
\ol{\M}_{g,n}^{\an}  \ar@/_2pc/[rrrr]_{\trop_{\ol{\M}_{g,n}}} \ar[rr]^{{\bf p}_{\ol{ {\M}}_{g,n}}\;}            & &  \ar[rr]^{{\Phi}_{\ol{\M}_{g,n}}}         \ol{\Sigma}(\ol{{\M}}_{g,n})  && \ol{M}_{g,n}^{trop}  
 }
\end{xy}
\end{eqnarray*}
(In the diagram $\bf p$ denotes the natural retraction map onto the skeleton.)
\end{theorem*}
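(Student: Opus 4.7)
The plan is to construct $\Phi_{\ol{\M}_{g,n}^{r}(\R)}$ cone by cone, exploiting the identification of the boundary strata of the Jarvis space $\ol{\M}_{g,n}^{r}(\R)$ with the pairs $(\G,\vr)\in\flowp_{g}(\Z/r\Z,\ol{\R})$ that index the cones of $\ol{\Roots}^{r,\trop}_{g}(\R)$. First I would recall from \cite{JarvisNet} that $\ol{\M}_{g,n}^{r}(\R)$ is a Deligne--Mumford stack with normal-crossings (toroidal) boundary, and that its boundary strata are parametrized by limit $r$-roots on pointed stable curves grouped by their underlying stable graph. This toroidal structure endows the analytification $\ol{\M}_{g,n}^{r}(\R)^{\an}$ with a Berkovich skeleton $\ol{\Sigma}(\ol{\M}_{g,n}^{r}(\R))$ that is, by the general machinery of \cite{ACP}, a generalized extended cone complex whose cones $\sigma_{(\G,\vr)}$ are indexed by the boundary strata, with face relations coming from specializations.

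Next I would make precise the combinatorial parametrization of the boundary. On a fixed pointed nodal curve $C$ with dual graph $\G$, the limit $r$-roots of $\D_{X,\R}$, modulo line bundles pulled back from the normalization, are classified by flows on $\G$ with values in $\Z/r\Z$ whose divisor matches $a_1v_1+\dots+a_nv_n+\ell K_\G$ modulo $r$; this is exactly $\flowp_{g}(\Z/r\Z,\ol{\R})$, matching the explicit bijection given in Definition \ref{def:divisor_flow} and used in Theorem \ref{thm:main-moduliroot}. Thus the cones of $\ol{\Sigma}(\ol{\M}_{g,n}^{r}(\R))$ and those of $\ol{\Roots}^{r,\trop}_{g}(\R)$ are indexed by the same poset, and I would define $\Phi_{\ol{\M}_{g,n}^{r}(\R)}$ to be the natural identification $\RR_{\geq 0}^{E(\G)}/\Aut(\G,\vr)\to\RR_{\geq 0}^{E(\G)}/\Aut(\G,\vr)$ on each cone $\sigma_{(\G,\vr)}$. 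Compatibility with face maps (hence $\Phi$ being a morphism of extended generalized cone complexes) follows because smoothing a subset of nodes on the algebraic side corresponds, via the bijection above, to the edge-contraction face operation in the poset $\flowp_{g}(\Z/r\Z,\ol{\R})$ supplied by Theorem \ref{thm:main1}.

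For commutativity of the diagram, the bottom row is the classical statement of \cite{ACP} and may be taken as known. The left square commutes by functoriality of the retraction and skeleton constructions applied to the finite morphism $\pi_{g,\R}\col\ol{\M}_{g,n}^{r}(\R)\to\ol{\M}_{g,n}$. The right square commutes because, on a cone $\sigma_{(\G,\vr)}$, both $\pi^{\trop}_{g,\R}\circ\Phi_{\ol{\M}_{g,n}^{r}(\R)}$ and $\Phi_{\ol{\M}_{g,n}}\circ\ol{\Sigma}(\pi^{\an}_{g,\R})$ are induced by the forgetful map $\flowp_{g}(\Z/r\Z,\ol{\R})\to\mathcal G_{g,n}$ of Theorem \ref{thm:main1}(iv), which is the identity on the underlying $\RR_{\geq 0}^{E(\G)}$ up to the inclusion $\Aut(\G,\vr)\subseteq\Aut(\G)$. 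The outer identity $\trop_{\ol{\M}_{g,n}^{r}(\R)}=\Phi_{\ol{\M}_{g,n}^{r}(\R)}\circ {\bf p}_{\ol{\M}_{g,n}^{r}(\R)}$ I would verify on points coming from DVRs mapping to $\ol{\M}_{g,n}^{r}(\R)$: such a point gives a one-parameter family of $r$-roots whose central fiber is a limit root, and the edge-lengths extracted on both sides are the valuations of the node-smoothing parameters, matched with the flow data via the bijection of Definition \ref{def:divisor_flow}.

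The main obstacle will be the precise identification of the boundary stratification of $\ol{\M}_{g,n}^{r}(\R)$ with the poset $\flowp_{g}(\Z/r\Z,\ol{\R})$, preserving the integral cone structure. This requires translating Jarvis's description of limit roots into the language of flows on dual graphs, and further checking that the local monodromy at each boundary divisor acts on the integral lattice of $\sigma_{(\G,\vr)}$ consistently with the automorphism group $\Aut(\G,\vr)$. A secondary technical point is handling the normalization step (since Jarvis's space is the normalization of the Caporaso--Casagrande--Cornalba space), which can introduce subtleties in the identification of toroidal charts; however, at the level of skeletons this is absorbed by quotienting by the automorphism group of each stratum.
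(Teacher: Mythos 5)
Your overall strategy --- building $\Phi_{\ol{\M}_{g,n}^r(\R)}$ cone by cone from the combinatorial types of boundary strata, using the toroidal structure and the machinery of \cite{ACP} --- is the same as the paper's. However, there are two genuine gaps. The most serious is your justification of the right-hand square: you assert that both $\pi^{\trop}_{g,\R}\circ\Phi_{\ol{\M}_{g,n}^{r}(\R)}$ and $\Phi_{\ol{\M}_{g,n}}\circ\ol{\Sigma}(\pi^{\an}_{g,\R})$ are ``the identity on the underlying $\RR_{\geq0}^{E(\G)}$.'' Neither is. The tropical forgetful map is defined (Equation \eqref{eq:ye}) by $y_e=\lvert r/\gcd(r,\ol{\vr}(e))\rvert\, x_e$, and the algebraic forgetful map is locally $t_e\mapsto \tau_e^{s_e}$ with $s_e=r/\gcd(r,\vr(e))$ (Equation \eqref{eq:forgetful-map}), because $\ol{\M}_{g,n}^r(\R)\to\ol{\M}_{g,n}$ is ramified along the boundary divisors where $\vr(e)\not\equiv 0$. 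Hence the induced map on skeleta also rescales the $e$-th coordinate by $s_e$ (via $\nu_R(\psi^\# t_e)=s_e\,\nu_R(\psi^\#\tau_e)$). The square commutes precisely because these two nontrivial rescalings agree --- that matching is the whole content of the factor in Equation \eqref{eq:ye} --- so claiming both maps are the identity skips the essential computation rather than performing it.

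The second gap concerns your claim that the cones of $\ol{\Sigma}(\ol{\M}_{g,n}^r(\R))$ and of $\ol{\Roots}^{r,\trop}_g(\R)$ ``are indexed by the same poset'' with $\Phi$ a ``natural identification.'' This is false in general: several strata $W_{\G,1},\dots,W_{\G,m_\G}$ over a fixed $\M_\G$ can share the same combinatorial type $(\G,\vr)$, which is exactly why the paper's closing remark says $\Phi$ is usually not an isomorphism (in the spin case, parity is the missing datum). What the argument actually needs --- and what the paper proves --- is weaker: that the combinatorial type is constant on each stratum and compatible with closures (Proposition \ref{prop:strata}, via the constancy of vanishing orders along sections from Remark \ref{rem:flow_fixed_at_node}), and that the monodromy group $H_W$ of each stratum is contained in $\Aut(\G,\vr)$ (Proposition \ref{prop:stabilizer} and Corollary \ref{cor:monodromy}), so that $\ol{\sigma}^\circ_W/H_W\to\ol{\sigma}^\circ_{(\G,\vr)}/\Aut(\G,\vr)$ is well defined. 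You correctly flag the monodromy containment as an obstacle, but you leave it unproved; it is a substantive step, not a formality absorbed by ``quotienting by the automorphism group,'' and it is where the Jarvis local theory (constancy of the vanishing orders $u_q$ in families) actually enters.
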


 In \cite{APS3} we compare the tropical space $\Roots_{g}^{r,\trop}(\R)$ with the  moduli space of tropical admissible covers 
 %$H^{\trop}_{h\to g}(\mu)$ 
 constructed in \cite{CHMR}.  

\subsection*{Acknowledgements} 

We thank Lucia Caporaso, Ethan Cotterill, Margarida Melo, Martin Ulirsch for many important discussions on the topics of the paper.

\section{Background material}

\subsection{Graphs}
\label{sec:graphs}

Let $\G = (V(\G), E(\G))$ be a graph. We will only consider connected graphs, while we will allow possible disconnected subgraphs of a graph. Given a subset $V\subset V(\G)$, we set $V^c:=V(\G)\setminus V$. Given $v\in V(\G)$ we denote by $\val_{\G}(v)$ the valence of $v$ (with loops counting twice).
The set of \emph{oriented edges} of $\G$ is defined as $\ora{E}(\G):=E(\G)\sqcup E(\G)$. We have a map $\ora{E}(\G)\ra E(\G)$ given by $e\mapsto e$. 
If $e\in \ora{E}(\Gamma)$, we will denote by $-e\in \ora{E}(\Gamma)$ the oriented edge, distinct from $e$, such that $e$ and $-e$ have the same underlying edge  
(we can view $e$ and $-e$ as the two possible orientations on a given edge).
We let $\sigma, \tau \col \ora{E}(\G)\to V(\G)$ be the source and target map, so that $\sigma(e)$ and $\tau(e)$ are the vertices incident to $e$, for every $e\in \ora{E}(\G)$. An \emph{orientation} $\ora{\G}$ on $\G$ is a section of the function $\ora{E}(\G)\to E(\G)$; we denote by $E(\ora{\G})$ the image of this section.

We denote by $(\G,w_\G,L_\G)$ a genus-$g$ stable graph with $n$-legs, where $w_\G\col V(\G)\to \mathbb{Z}_{\geq0}$ is the weight function and $L_\G\col [n]\to V(\G)$ is the leg function. We will usually write $\G$ instead of $(\G,w_{\G},L_{\G})$. The \emph{genus} of $\G$ is $g=:g_\G:=b_1(\G)+\sum_{v\in V(\G)}w(v)$, where $b_1(\G)$ is the first Betti number of $\G$. 
We say that $\G$ is \emph{stable} if $2\val_{\G}(v)-2+w_{\G}(v)+L^{-1}_\G(v)>0$, for every $v\in V(\G)$. In  particular, if $\G$ is a genus-$g$ stable graph with $n$ legs, then $2g-2+n>0$. For a graph $\Gamma$ and a subset $\mathcal E\subset E(\Gamma)$ we denote by $\Gamma/\mathcal E$ the graph obtained by contracting all the edges $e\in \mathcal E$.
We refer to \cite{APPLMS} for the notions and properties of specializations of weighted graphs with legs (that is, weighted graphs obtained by contracting edges of a graph).

%A \emph{path} in $\G$ is a sequence $\gamma = (v_0,e_1, v_1,e_2\ldots, e_n,v_n)$ where $\sigma(e_i)=v_{i-1}$ and $\tau(e_i)=v_i$. We call $n$ the \emph{length} of $\gamma$. The distance $d(v,v')$ between two vertices $v,v'$ is the minimum $n$ such that there exists a path $\gamma$ of length $n$ with $v_0=v$ and $v_n=v'$. A cycle $\gamma$ is a path such that $v_n=v_0$.

%We say that $\G$ is \emph{pure} if its length function $w_{\Gamma}$ is the zero function, and \emph{almost pure} if it is either pure, or $|V(\G)|=1$.
%Given an integer $p$, we say that $\G$ is \emph{$p$-regular} if it is pure and $\val_\Gamma(v)+|L_\G^{-1}(v)|=p$ for every vertex $v\in V(\G)$. 
We say that $\G$ is a \emph{tree} if $b_1(\G)=0$, and \emph{tree-like} if $\G$ becomes a tree when contracting its loops. 
%A \emph{cyclic subgraph} of $\G$ is a spanning subgraph all of whose vertices have even valency. 
A \emph{refinement} $\G'$ of $\G$ is a graph whose edges are obtained by refining the edges of $\G$, i.e., inserting a certain number $n_e$ (possibly zero) of vertices in the interior of each edge $e$ of $\G$. We say that an edge of $\G'$ is \emph{over} and edge $e$ of $\G$ if it is obtained by refining $e$. Notice that any orientation of an edge $e$ induces a natural orientation on every edge over $e$.

Let $A$ be an Abelian group. 
An \emph{$A$-divisor} on $\G$ is a function $D\col V(\G)\to A$. The \emph{degree} of an $A$-divisor $D$ is  $\deg(D)=\sum_{v\in V(\G)} D(v)\in A$. We denote by $\Div(\G,A)$ be the Abelian group of $A$-divisors on $\G$, and for $a\in A$, we let by $\Div^a(\G,A)$ the subset of divisors of degree $a$.

\subsection{Ranked posets}

Let  $S=(S, \geq)$ be a poset. We say that $S$ is \emph{ranked} if every maximal chain in $S$ has the same length. A ranked poset has a rank function $\rank\col S\to \mathbb{Z}_{\geq0}$ such that $\rank(x)=\rank(y)+1$ if $x$ covers $y$ and $\rank(x)=0$ if $x$ is minimal. In particular, every maximal element has the same rank, which is the \emph{dimension} of $S$. A map between ranked posets is an order and rank preserving function.

% Let $S$ be a ranked poset. 
% A \emph{link} in $S$ is a sequence $(x_0,y_1,x_1,y_2\ldots, y_n,x_n)$ such that $\rank(x_i)=\rank(y_j)+1$ for every $i\in\{0,\ldots, n\}$ and $j\in \{1,\ldots, n\}$ and $x_{i-1},x_i\geq y_{i}$ for every $i\in\{1,\ldots,n\}$. In this case, we say that $x_0$ and $x_n$ are \emph{linked} in $S$. We say that $S$ is \emph{connected through codimension 1} if every pair of maximal elements are linked. The following lemma is straightforward.

% \begin{lemma}\label{lem:surjective-cod1}
% If $\mathcal P\to \mathcal P'$ is a surjective map of ranked posets and $\mathcal P$ is connected through codimension 1, then $\mathcal P'$ is connected through codimension 1.
% \end{lemma}

We let $\mathbf{G}_{g,n}$  be the category whose objects are genus-$g$ stable graphs with $n$-legs and whose morphisms are specializations. We let $\mathcal{G}_{g,n}$ be the poset whose elements are the isomorphisms classes of elements of $\mathbf{G}_{g,n}$. The poset $\mathcal{G}_{g,n}$ is ranked with rank function given by $\rank(\G)=|E(\G)|$.
% and it is connected through codimension $1$ (see  \cite{HatcherThurston} and \cite{CaporasoLinkage}). 

%For $\G,\G'\in\mathcal{G}_{g,n}$ we say that $\G$ and $\G'$ are \emph{strongly linked} if there exists a link connecting $\G$ and $\G'$ such that each specialization in the link does not contract a loop.

\subsection{Flows on graphs}

 Let $A$ be an Abelian group $A$ with identity $0$ (in this paper we usually use $A=\ZZ$ or $A=\ZZ/m\ZZ$). For elements $a\in A$, $b\in \mathbb{Z}$, we define 
\[
\gcd(b, a + bA ) = \max\{j\in \mathbb{Z}; j \text{ divides } |b|, a+bA\subset jA \}.
\]
Notice that, when $A=\mathbb Z$, we have that $\gcd(b, a + bA )$ is the usual $\gcd(b,a)$. If $d\in \mathbb{Z}$ is such that the multiplication by $d$ is injective in the group $A$, we have that
\begin{equation}
    \label{eq:d_gcd}
    \gcd(db, da+dbA)= d\gcd(b, a+bA).
\end{equation}
We will also need the following lemma.
\begin{lemma}
\label{lem:torsion_quotient}
Let $A$ be an abelian group and $r$ be an integer such that the multiplication map $A\xrightarrow{\cdot r} A$ is injective. If $a$ is a torsion element of $A$, then $a \in rA$.
\end{lemma}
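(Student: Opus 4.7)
The plan is to exploit the order of $a$. Since $a$ is torsion, let $m$ be the smallest positive integer with $ma=0$. My approach has two steps: first show that $\gcd(r,m)=1$, which is where the injectivity hypothesis is used, and then conclude $a \in rA$ by Bézout's identity.

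For the first step, I would argue by contradiction. Suppose $d := \gcd(r,m) > 1$, and write $m = dm'$ with $m' < m$. Consider the element $m'a \in A$. Then
\[
r(m'a) = \tfrac{r}{d}(d m' a) = \tfrac{r}{d}(ma) = 0.
\]
Since multiplication by $r$ is injective, this forces $m'a = 0$, contradicting the minimality of $m$ as the order of $a$. Hence $\gcd(r,m)=1$.

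Once coprimality is established, Bézout provides integers $s,t$ with $sr + tm = 1$, and then
\[
a = (sr+tm)a = r(sa) + t(ma) = r(sa) \in rA,
\]
as desired. The only conceptual point in the whole argument is recognizing that injectivity of multiplication by $r$ restricts the possible torsion orders to integers coprime to $r$; after that observation the rest is a routine application of Bézout.
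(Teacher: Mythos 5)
Your proof is correct and follows essentially the same route as the paper's: both arguments produce an annihilator of $a$ coprime to $r$ (the paper strips common factors from an arbitrary $n$ with $na=0$ iteratively, you use minimality of the order $m$ and a single contradiction) and then conclude via Bézout. The use of injectivity to rule out common factors is identical in substance.
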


\begin{proof}
   Let $n\in \mathbb{Z}$ be such that $na = 0$ and let $d=\gcd(n,r)$. Then we have $d\frac{n}{d}a=0$. Since multiplication by $d$ is injective (because $d$ divides $r$), we have that $\frac{n}{d}a=0$. Repeating the argument we can assume that $n$ is coprime with $r$ (here we use that $n\neq 0$). If we write $nx+ry=1$, for some $x,y\in \mathbb Z$, we have that $a= (xn+ry)a=r(ya)$, which means that $a\in rA$.
\end{proof}

Let $\G$ be a graph.
An \emph{$A$-flow} is a function $\varphi\col \ora{E}(\Gamma)\to A$ such that $\varphi(e)+\varphi(-e)=0$ for every oriented edge $e\in \ora{E}(\Gamma)$. 
We denote by $C_1(\Gamma,A)$ be the $A$-module of $A$-flows on $\Gamma$. 
  Given a specialiation of graphs $\iota\col \Gamma\to \Gamma'$ and a flow $\varphi$ on $\Gamma$, we define the flow $\varphi':=\iota_*(\varphi)$ as the flow on $\G'$ such that $\varphi'(e)=\varphi(e)$, for every edge $e\in \ora{E}(\G')\subset \ora{E}(\Gamma)$. In this case, we say that $(\Gamma,\vr)$ \emph{specializes} to $(\Gamma', \vr')$.
   The \emph{divisor of an $A$-flow} $\varphi$ is defined as
  \[
  \divi(\varphi)(v):= \sum_{e\in \ora{E}(\Gamma);\tau(e)=v}\varphi(e),
  \]
  for $v\in V(\G)$.
Notice that $\divi(\varphi)$ has always degree zero.
An $A$-flow $\varphi$ is a \emph{Kirchhoff flow} if $\divi(\varphi)=0$ (the zero divisor). The following lemma is straightforward.

 \begin{lemma}\label{lem:push-flow}
 If $\iota\col \G\to \G'$ is a specialization of graphs with $n$ legs, then
for every $A$-flow $\vr$  on $\G$   we have that $\divi(\iota_*(\vr))=\iota_*(\divi(\vr))$.
    \end{lemma}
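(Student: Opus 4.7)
The plan is to evaluate both sides of the identity vertex by vertex and exploit the antisymmetry relation $\vr(e)+\vr(-e)=0$ to cancel the contributions coming from contracted edges.

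First, I would unpack what a specialization $\iota\col \G\to \G'$ does at the level of vertices and oriented edges. Let $\mathcal E\subset E(\G)$ denote the set of edges contracted by $\iota$, so that $V(\G')$ is the quotient of $V(\G)$ by the equivalence relation identifying endpoints of edges in $\mathcal E$, and $\ora E(\G')$ sits naturally inside $\ora E(\G)$ as the set of oriented edges not lying over $\mathcal E$. For $v'\in V(\G')$, write $V_{v'}:=\iota^{-1}(v')\subset V(\G)$. Unwinding the definitions, I would compute
\[
\iota_*(\divi(\vr))(v')=\sum_{v\in V_{v'}}\,\sum_{\substack{e\in\ora E(\G)\\ \tau(e)=v}}\vr(e)=\sum_{\substack{e\in\ora E(\G)\\ \tau(e)\in V_{v'}}}\vr(e),
\]
while the right-hand side expands as
\[
\divi(\iota_*\vr)(v')=\sum_{\substack{e\in\ora E(\G')\\ \tau(e)=v'}}\iota_*\vr(e)=\sum_{\substack{e\in\ora E(\G')\\ \tau(e)=v'}}\vr(e).
\]

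Next, I would split the sum in the first expression according to whether the underlying edge lies in $\mathcal E$ or not. The oriented edges $e\in\ora E(\G)$ with $\tau(e)\in V_{v'}$ fall into two classes: those lying over $\mathcal E$, whose source $\sigma(e)$ necessarily also lies in $V_{v'}$ (since $\iota$ identifies the endpoints of contracted edges), and those lying in $\ora E(\G')\subset \ora E(\G)$, whose image in $\G'$ has target exactly $v'$. For an edge $e$ in the first class, both $e$ and $-e$ appear in the sum with target in $V_{v'}$, so their contributions cancel by the flow condition $\vr(e)+\vr(-e)=0$. What remains is precisely the sum defining $\divi(\iota_*\vr)(v')$, yielding the claimed equality.

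There is no real obstacle here; the only point to be careful about is the case of a loop in $\mathcal E$ (where $\sigma(e)=\tau(e)$ already in $\G$), but this is handled uniformly since $e$ and $-e$ still pair up and cancel in the sum. Since the identity holds at each $v'\in V(\G')$, it holds as an equality of divisors on $\G'$.
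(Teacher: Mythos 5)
Your proof is correct: the vertexwise computation, splitting the oriented edges with target in $\iota^{-1}(v')$ into those over contracted edges (which cancel in $\pm$ pairs by antisymmetry) and those surviving in $\G'$, is exactly the argument the authors have in mind when they declare the lemma ``straightforward'' and omit the proof. Nothing is missing, and your remark about loops in $\mathcal E$ is handled correctly.
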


\begin{lemma}\label{lem:surjective}
Given an abelian group $A$, the map $C_1(\Gamma,A)\to \Div^0(\Gamma,A)$ given by $\varphi\to \divi(\varphi)$ is surjective.
\end{lemma}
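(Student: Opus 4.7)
The plan is to give a direct constructive proof by reduction to the case of a tree, followed by an induction on the number of vertices.

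First, observe that the map $\divi\col C_1(\Gamma,A)\to\Div^0(\Gamma,A)$ is $A$-linear, and that it is well-defined into degree-$0$ divisors (each oriented edge $e$ contributes $\vr(e)$ to $\tau(e)$ and $-\vr(e)=\vr(-e)$ to $\sigma(e)$, so these cancel in the sum of values). To prove surjectivity, fix $D\in \Div^0(\Gamma,A)$. Choose a spanning tree $T\subset \Gamma$ (which exists as $\Gamma$ is connected) and set $\vr(e)=0$ for every oriented edge $e$ whose underlying edge does not lie in $T$; in particular, loops and all other non-tree edges contribute nothing to $\divi(\vr)$. Hence it suffices to construct a flow on the tree $T$ whose divisor equals $D$.

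I would then argue by induction on $|V(T)|$. The base case $|V(T)|=1$ forces $D=0$ (since $\deg D=0$), and the zero flow works. For the inductive step, pick any leaf $v$ of $T$, and let $e_0\in \ora{E}(T)$ be the unique oriented edge with $\tau(e_0)=v$, writing $u:=\sigma(e_0)$. Set $\vr(e_0):=D(v)$ and $\vr(-e_0):=-D(v)$. Let $T'$ be the tree obtained by removing $v$ and $e_0$, and define $D'\in \Div(T',A)$ by $D'(u):=D(u)+D(v)$ and $D'(w):=D(w)$ for the remaining vertices $w\ne u$. Then $\deg(D')=\deg(D)=0$, so by the inductive hypothesis there is a flow $\psi$ on $T'$ with $\divi(\psi)=D'$. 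Extending $\psi$ to $T$ by the values of $\vr$ on $\pm e_0$ yields a flow on $T$ whose divisor agrees with $D$: at $v$ the only contribution is $\vr(e_0)=D(v)$; at $u$ the contributions are $\vr(-e_0)=-D(v)$ plus $\divi(\psi)(u)=D(u)+D(v)$, summing to $D(u)$; and at every other vertex $w$ the value is $\divi(\psi)(w)=D(w)$.

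The only subtle point is the bookkeeping when the leaf edge $e_0$ is adjacent to a vertex $u$ that also supports parts of the divisor already handled, but this is exactly why the modification $D'(u)=D(u)+D(v)$ is made; once this is in place the induction runs with no real obstacle, and no hypothesis on $A$ (such as torsion-freeness) is needed.
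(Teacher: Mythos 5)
Your proof is correct and takes essentially the same approach as the paper: both arguments reduce to a spanning tree $T$ by setting $\vr(e)=0$ on all non-tree edges, and then produce a flow on $T$ with the prescribed divisor. The only difference is in how the tree case is handled --- the paper writes the flow down in closed form (the value on a tree edge $e$ is read off from the degree of $D$ restricted to the components of $T$ with $e$ removed), while you construct it by induction on leaves; both are valid.
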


\begin{proof}
   Let $D$ be an $A$-divisor of degree $0$ on $\Gamma$. 
   Choose a spanning tree $T$ of $\Gamma$.  Set $\varphi(e)=0$ for every $e\notin \ora{E}(T)$. Consider $e \in \ora{E}(T)$, and  let $T_1$ and $T_2$ be the components of $T$ containing the target and source of $e$, respectively. We define $\varphi(e)=\deg(D|_{T_1})- \deg(D|_{T_2})$. Then $\varphi$ is a $A$-flow on $\Gamma$ such that $\divi(\varphi)=D$.
\end{proof}

   We denote by $H_1(\Gamma, A)$ the kernel of the map
   \begin{align*}
       C_1(\Gamma,A)& \to \Div^0(\Gamma,A)\\
        \vr &\mapsto \divi(\vr).
   \end{align*}

\subsection{Tropical curves and their moduli}

Let $X$ be an $n$-pointed (weighted) tropical curve. For a model $(\Gamma,\ell)$ of $X$, we identify an edge $e\in E(\G)$ with the associated segment in $X$. Moreover, we let $e^\circ$ be the interior of $e$ in $X$.

We say that $X$ is \emph{stable} if there is a model $(\Gamma,\ell)$ of $X$ with $\G$ a (weighted) stable graph with $n$ legs. In this case, this stable graph is unique and it is called the \emph{canonical model} of $X$.

Let $A$ be an Abelian group. An \emph{$A$-divisor} on $X$ is a function $\mathcal{D}\col X\to A$ such that $\D(p)\neq 0$ for finitely many points $p\in X$. The set $\{p\in X; \mathcal{D}(p)\neq0\}$ is called the \emph{support} of $\mathcal{D}$ and it is denoted by $\Supp(\mathcal{D})$.

An \emph{$A$-rational function} on $X$ is a tuple $f=(\G_f,\ell_f,\varphi_f)$ where $(\G_f,\ell_f)$ is a model of $X$ and $\varphi_f$ is an $A$-flow on $\G$ such that 
\begin{equation}\label{eq:rationalfunction}
  \sum_{e \in \gamma} \varphi_f(e)\otimes \ell_f(e) =0,
\end{equation}
for every cycle $\gamma$ of $\G$, where the equality above is seen in $A\otimes_{\mathbb Z} \mathbb{R}$. We identify $A$-rational functions which agree in a common refinement.
Notice that if $A=\mathbb Z$, the notion of $A$-rational function coincides with the usual notion of rational function on a tropical curve.

If we have an orientation $\ora{\G}$ on $\Gamma$, the condition in Equation  \eqref{eq:rationalfunction} is equivalent to the condition
\begin{equation}\label{eq:rationalfunction-gamma}
  \sum_{e \in E(\ora{\G})}\gamma(e) (\varphi_f(e)\otimes \ell_f(e))=0,
\end{equation}
for every cycle $\gamma$ of $\G$,
where $\gamma(e) = 1$ if $e\in \gamma$, $\gamma(e) = -1$ if $-e \in \gamma$ and $\gamma(e) = 0$, otherwise.
Given an $A$-rational function $f=(\Gamma_f,\ell_f,\vr_f)$ on $\G$, the \emph{principal divisor} of $f$ is given by $\divi(\varphi_f)\in \Div(\G_f)$, seen as a divisor on $X$.

\begin{remark}\label{rem:const}
Let $f$ be a $\ZZ$-rational function on a tropical curve $X$ such that $\divi(f)=0$. Then $f$ is constant, and in particular $\vr_f=0$.
\end{remark}

\begin{lemma}\label{lem:rational-function-torsion} 
Let $f$ be $A$-rational function on a tropical curve $X$ such that $\divi(f)=0$. Then $\vr_f$ is a torsion flow.
\end{lemma}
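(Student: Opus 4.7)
The plan is to reduce the statement to the classical fact that the kernel of the canonical map $A \to A \otimes_{\ZZ} \RR$ is the torsion subgroup of $A$. Since $\divi(\vr_f)=0$, the flow $\vr_f$ is Kirchhoff, hence lies in $H_1(\G_f, A)$. As the $1$-chain complex of $\G_f$ consists of finitely generated free $\ZZ$-modules and $H_0(\G_f,\ZZ)\cong \ZZ$ is free, the universal coefficient theorem gives $H_1(\G_f, A) \simeq H_1(\G_f, \ZZ) \otimes_{\ZZ} A$. Choosing a $\ZZ$-basis $\gamma_1,\ldots,\gamma_b$ of $H_1(\G_f,\ZZ)$, where $b=b_1(\G_f)$, I would write
\[
\vr_f(e) \,=\, \sum_{i=1}^{b} a_i\, \gamma_i(e), \qquad a_i\in A,
\]
and reduce to showing that each $a_i$ is torsion: a common annihilator of the $a_i$'s then annihilates $\vr_f$.

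The second step is to unpack condition \eqref{eq:rationalfunction-gamma} for $\gamma = \gamma_j$. After substituting the expansion of $\vr_f$ and moving the integer factors $\gamma_i(e)\gamma_j(e)$ across the tensor symbol, the condition becomes
\[
\sum_{i=1}^{b} a_i \otimes M_{ij} \,=\, 0 \;\text{ in }\; A\otimes_{\ZZ}\RR, \qquad M_{ij} := \sum_{e\in E(\ora{\G}_f)} \gamma_i(e)\,\gamma_j(e)\,\ell_f(e) \,\in\, \RR,
\]
one relation for each $j\in\{1,\ldots,b\}$. The crucial point is that the symmetric real matrix $M=(M_{ij})$ is positive definite: for $v\in\RR^{b}\setminus\{0\}$,
\[
v^{T} M v \,=\, \sum_{e\in E(\ora{\G}_f)} \Bigl(\sum_{i} v_i\, \gamma_i(e)\Bigr)^{\!2}\, \ell_f(e) \,>\, 0,
\]
since all tropical edge lengths are strictly positive and the $\gamma_i$ are $\RR$-linearly independent in $H_1(\G_f,\RR)$. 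In particular $\det(M)\neq 0$.

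To conclude, I would multiply the $b$ relations by the cofactors $M^{\mathrm{adj}}_{jk}\in\RR$ and sum over $j$; using $M\cdot M^{\mathrm{adj}} = \det(M)\cdot I$, this yields $a_k \otimes \det(M) = 0$ in $A\otimes_{\ZZ}\RR$ for every $k$. Since $A\otimes_{\ZZ}\RR$ is an $\RR$-vector space and $\det(M)$ is a nonzero real number, dividing by $\det(M)$ gives $a_k\otimes 1 = 0$, so $a_k$ lies in the kernel of the canonical map $A\to A\otimes_{\ZZ}\RR$. This kernel coincides with the torsion subgroup of $A$ (since $\ker(A\to A\otimes_{\ZZ}\Q)$ is exactly the torsion subgroup, and $A\otimes_{\ZZ}\Q\hookrightarrow A\otimes_{\ZZ}\RR$ by flatness of $\RR$ over $\Q$). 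Hence each $a_k$ is torsion and so is $\vr_f$. The only step with real content is the positive-definiteness of $M$; I expect this to be the main point of the proof, but it is immediate from the strict positivity of edge lengths together with the linear independence of the chosen cycle basis.
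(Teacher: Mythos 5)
Your proof is correct, but it takes a genuinely different route from the paper's. The paper argues by reduction to the integer case: it passes to the finitely generated subgroup $A'\subseteq A$ generated by the values of $\vr_f$, invokes the structure theorem to split $A'$ into free and torsion summands, and projects $\vr_f$ onto each $\ZZ$-summand to obtain a $\ZZ$-rational function with zero divisor, which is constant by Remark \ref{rem:const}; hence only the torsion part of $\vr_f$ survives. You instead give a direct linear-algebra argument: identifying $H_1(\G_f,A)$ with $H_1(\G_f,\ZZ)\otimes_{\ZZ}A$ (valid since the relevant sequence of free $\ZZ$-modules splits), expanding $\vr_f$ in a cycle basis, and observing that the rational-function condition \eqref{eq:rationalfunction-gamma} applied to the basis cycles yields a linear system over $A\otimes_{\ZZ}\RR$ whose matrix is the length-weighted cycle pairing, which is positive definite because edge lengths are strictly positive. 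Inverting via the adjugate forces each coefficient into $\ker(A\to A\otimes_{\ZZ}\RR)$, i.e.\ into the torsion subgroup. All the steps check out (in particular, the basis cycles can be taken to be fundamental cycles of a spanning tree, so \eqref{eq:rationalfunction-gamma} does apply to them, and $\RR$-linear independence of a $\ZZ$-basis of $H_1(\G_f,\ZZ)$ is automatic). The trade-off: the paper's proof is shorter but uses Remark \ref{rem:const} as a black box, whereas yours is self-contained and in fact reproves that remark as the special case $A=\ZZ$ (where torsion means zero), making explicit that the positive definiteness of the period matrix is the real content underlying both arguments.
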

\begin{proof}
Write $f=(\Gamma_f,\ell_f,\vr_f)$. 
   Let $A'$ be the subgroup of $A$ generated by $\{\vr_f(e) ; e\in \ora{E}(\G)\}$. By the classification of $\ZZ$-modules, we may assume that $A'=\ZZ^k\oplus\bigoplus_{i} \ZZ/k_{i}\ZZ$ for some integers $k$ and $k_i$. Let $\overline{\vr}$ be the projection of $\vr_f$ onto a $\ZZ$-summand of $A'$. Then, $\overline{\vr}$ induces a $\ZZ$-rational function on $X$ with $0$ divisor. By Remark \ref{rem:const}, we have that $\overline{\vr}$ is $0$. Hence $(\prod_{i} k_i)\vr_f=0$, which means that $\vr_f$ is a torsion flow. 
\end{proof}

\begin{lemma}
\label{lem:principal}
 Let $X$ be a tropical curve with model $(\Gamma, \ell)$. Let $e\in \ora{E}(\G)$, and consider distinct points $\sigma(e) = p_0, p_1, p_2, p_3, p_4, p_5=\tau(e)$ in $e$ such that $p_i$ is between $p_{i-1}$ and $p_{i+1}$ for $i=1,2, 3, 4$, and such that the distance between $p_1$ and $p_2$ is equal to distance between $p_3$ and $p_4$. Then, for every $a \in A$ the divisor $-ap_1 + ap_2 + ap_3 - ap_4$ is principal on $X$.
 \end{lemma}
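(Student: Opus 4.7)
The plan is to exhibit an explicit $A$-rational function $f$ on $X$ whose principal divisor equals $-ap_1+ap_2+ap_3-ap_4$. First, I would construct the model $\G_f$ by refining the edge $e$ at the four interior points $p_1,p_2,p_3,p_4$; this produces five consecutive oriented sub-edges $e_0,e_1,e_2,e_3,e_4$ of $\G_f$ lying over $e$, where $e_i$ is oriented from $p_i$ to $p_{i+1}$ and has length $\ell_f(e_i)=d(p_i,p_{i+1})$. All other edges and lengths of $\G$ are left unchanged.

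Next, I would define a candidate flow $\vr_f\col \ora{E}(\G_f)\to A$ to be zero on every oriented edge of $\G_f$ that does not lie over $e$, and on the refinement of $e$ set
\[
\vr_f(e_0)=\vr_f(e_2)=\vr_f(e_4)=0,\qquad \vr_f(e_1)=a,\qquad \vr_f(e_3)=-a,
\]
extending antisymmetrically to the reverse orientations. Using $\divi(\vr_f)(v)=\sum_{\tau(e')=v}\vr_f(e')$ at each of $p_1,\dots,p_4$ (each of which is incident in $\G_f$ only to two sub-edges of the refinement of $e$) gives directly the desired values $-a,a,a,-a$, while at $p_0=\sigma(e)$, $p_5=\tau(e)$, and all other vertices of $\G_f$ only zero-flow edges contribute, yielding divisor value $0$. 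This also handles the loop case $p_0=p_5$, since both boundary contributions $\vr_f(e_0)$ and $\vr_f(-e_4)$ are zero.

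The only nontrivial verification is the cycle condition \eqref{eq:rationalfunction}. Because the four interior vertices $p_1,\dots,p_4$ have valence $2$ in $\G_f$, any cycle $\gamma$ in $\G_f$ either traverses all of $e_0,\dots,e_4$ with a common sign, or is disjoint from the refinement of $e$. In the first case, since $\vr_f$ is supported on $\{e_1,e_3\}$, the cycle sum reduces to $\pm\bigl(a\otimes\ell_f(e_1)-a\otimes\ell_f(e_3)\bigr)$, which vanishes because the hypothesis $d(p_1,p_2)=d(p_3,p_4)$ forces $\ell_f(e_1)=\ell_f(e_3)$; in the second case the sum is trivially zero. Thus $f=(\G_f,\ell_f,\vr_f)$ is a well-defined $A$-rational function on $X$ with $\divi(f)=-ap_1+ap_2+ap_3-ap_4$, proving the lemma. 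No step is genuinely hard here; the mild subtlety is simply isolating the flow inside a symmetric pattern $(0,a,0,-a,0)$ along the refinement so that the cycle condition forces only the single length equality already supplied by the hypothesis.
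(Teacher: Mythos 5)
Your proposal is correct and follows essentially the same route as the paper: refine $e$ at the four interior points, put the flow $a$ on $e_1$ and $-a$ on $e_3$ (zero elsewhere), check the cycle condition via $\ell_f(e_1)=\ell_f(e_3)$, and read off the divisor. Your verification of the cycle condition and the loop case is slightly more explicit than the paper's, but the construction is identical.
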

 
 \begin{proof}
    Let $(\Gamma',\ell')$ be the model of $X$ such that $V(\Gamma')=V(\Gamma)\cup \{p_0,p_1,p_2,p_3\}$. Let $e_0, e_1, e_2, e_3, e_4$ be the edges of $\Gamma'$ over $e$ in such that $\sigma(e_i) = p_i$ and $\tau(e_i) = p_{i+1}$. In particular, we have that $\ell'(e_1)=\ell'(e_3)$. Consider the triple $f = (\Gamma', \ell', \vr)$ such that $\vr(\pm e_1) = \pm a$ and $\vr(\pm e_3) = \mp a$ and $\vr(e')=0$ for every $e'\in \ora{E}(\Gamma')\setminus\{\pm e_1, \pm e_3\}$. Notice that $f$ is a rational function on $X$, since $a\otimes \ell(e_1) + (-a)\otimes \ell(e_2) = 0$. Moreover, $\divi(f) = -ap_1 + ap_2 + ap_3 - ap_4$, and we are done
 \end{proof}

 For each stable graph $\Gamma$, we have an associated cone $\sigma_{\Gamma}:=\mathbb{R}_{\geq 0}^{E(\Gamma)}$ whose interior parametrizes genus-$g$ stable tropical curves with $n$ legs and with canonical model equal to $\Gamma$. For each specialization $\Gamma\to \Gamma'$ there is an associated face morphism $\sigma_{\Gamma'}\to \sigma_{\Gamma}$ induced by the inclusion $E(\Gamma')\subset E(\Gamma)$ (note that the specialization and face morphism can be a nontrivial automorphism). The moduli space of tropical curves is the generalized cone complex. 
 \[
M_{g,n}^{\trop}:= \lim_{\longrightarrow} \sigma_{\Gamma},
 \]
 where the colimit is taken over the category $\mathbf{G}_{g,n}$.
  We refer the reader to \cite{ACP} for the definitions of generalized cone complexes and morphisms between them.

\section{The universal poset of flows}
\label{sec:poset}

In this section we introduce the universal poset of flows, which will play a fundamental role in the rest of the paper.  This poset is obtained by enriching the graphs appearing in the poset $\mathcal G_{g,n}$ with a flow whose associated divisor is equal to a fixed divisor on the graph. 
Fist of all, we consider the case of a fixed graph. Throughout this section, $A$ will be an abelian group.

  Let $\G$ be a graph.
   Given an $A$-divisor $D$ on $\Gamma$, we define the set:
   \[
   \flowp(\G,D)=\{\vr\in C_1(\G,A); \divi(\vr)=D\}.
   \]
  
  \begin{remark}
  \label{rem:deg0}
  We have that $\flowp(\G,D)\neq \emptyset$ if and only if $D$ has degree $0$ (see Lemma \ref{lem:surjective}).
  \end{remark}
  
  \begin{remark}
  \label{rem:tree_flow_equal}
    Let $\G$ be a tree-like graph and $D$ be a divisor on $\G$. Given elements $\varphi_1,\varphi_2\in \flowp(\G,D)$, we have that  $\varphi_1(e)=\varphi_2(e)$ for every non-loop edge $e\in \ora{E}(\G)$.
  \end{remark}
   
   The following result tells us that
  the set $\flowp(\G,D)$ is well-behaved under specializations of graphs.
  
  \begin{prop}\label{prop:surjective}
    If $\iota\col \G\to \G'$ is a specialization of graphs and $D$ is an $A$-divisor on $\Gamma$, then the induced map $\iota_*\col \flowp(\G,D)\to \flowp(\G',\iota_*(D))$ is surjective. If $b_1(\G)=b_1(\G')$, then $\iota_*$ is bijective.
  \end{prop}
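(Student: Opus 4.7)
My plan is to analyze $\iota$ through its edge-contraction set $S \subseteq E(\G)$, so that $E(\G')$ is identified with $E(\G)\setminus S$ and the vertices of $\G'$ correspond bijectively to the connected components of the subgraph $\G_S:=(V(\G),S)$; the map $\iota$ sends each $u\in V(\G)$ to the component $C(u)$ containing it. Given $\vr' \in \flowp(\G',\iota_*(D))$, the idea is to build a lift $\vr \in \flowp(\G,D)$ by setting $\vr(e) = \vr'(e)$ for every $e \in \ora{E}(\G)\setminus \ora{S}$ and then choosing $\vr$ on $\ora{S}$ one component of $\G_S$ at a time.

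Concretely, for each component $C$ of $\G_S$ corresponding to a vertex $v_C \in V(\G')$, I would define
\[
D_C(u):=D(u) - \sum_{\substack{e\in \ora{E}(\G)\setminus\ora{S}\\ \tau(e)=u}} \vr'(e), \qquad u \in V(C),
\]
so that $\vr$ has divisor $D$ if and only if its restriction to $C$ realizes the divisor $D_C$. By Lemma \ref{lem:surjective} such a flow on $C$ exists provided $\deg(D_C)=0$. Summing over $u \in V(C)$, the first sum becomes $\iota_*(D)(v_C)$, while the second sum rearranges to $\divi(\vr')(v_C)$, since oriented edges outside $\ora{S}$ with target in $V(C)$ are exactly those of $\ora{E}(\G')$ with target $v_C$. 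The hypothesis $\divi(\vr')=\iota_*(D)$ then forces $\deg(D_C)=0$, and surjectivity of $\iota_*$ follows by gluing the per-component lifts.

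For the second assertion, the hypothesis $b_1(\G)=b_1(\G')$ combined with the identity $b_1(\G_S)=|S|-|V(\G)|+|V(\G')|$ (applied to the not-necessarily-connected graph $\G_S$, whose number of components is $|V(\G')|$) forces $b_1(\G_S)=0$, so each component of $\G_S$ is a tree. Since a tree has no loops, Remark \ref{rem:tree_flow_equal} shows that a flow on it is determined by its divisor, so the lift is unique and $\iota_*$ is bijective. The main obstacle is essentially bookkeeping: verifying that $D_C$ has degree $0$ on each contracted component; once that identity is in place, both claims follow formally from Lemma \ref{lem:surjective} and Remark \ref{rem:tree_flow_equal}.
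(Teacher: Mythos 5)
Your proof is correct, but it takes a genuinely different route from the paper's. The paper reduces to the contraction of a single edge $e_0$ and writes down an explicit formula for the missing value $\vr(e_0)$ (Equation \eqref{eq:phiD}), with uniqueness in the non-loop case read off directly from that formula; the general case then follows by composing single-edge contractions. You instead handle an arbitrary specialization in one step: you decompose the contracted subgraph $\G_S$ into its components $C$, observe that lifting $\vr'$ amounts to solving $\divi(\cdot)=D_C$ on each $C$ separately, verify $\deg(D_C)=0$ from the hypothesis $\divi(\vr')=\iota_*(D)$, and invoke Lemma \ref{lem:surjective} componentwise (which is legitimate since each $C$ is connected, and the computation also covers the degenerate case where $C$ is a single isolated vertex). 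For bijectivity, your Euler-characteristic argument showing each $C$ is a tree, combined with Remark \ref{rem:tree_flow_equal}, replaces the paper's observation that the contracted edge is not a loop. The paper's approach buys an explicit formula for the lifted flow (which it reuses implicitly for uniqueness), while yours avoids the reduction to a single edge, makes the role of Lemma \ref{lem:surjective} transparent, and isolates exactly where the hypothesis $b_1(\G)=b_1(\G')$ enters. Both are complete.
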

     \begin{proof}
       We can assume that $\iota$ is the contraction of a single edge $e_0\in \ora{E}(\G)$. Let $\vr'\in \flowp(\G',\iota_*(D))$. If $v$ is the vertex to which $e_0$ contracts, then
       \[
       \sum_{\substack{e\in \ora{E}(\G')\\ \tau(e)=v}}\vr'(e)=\iota_*(D)(v)=\begin{cases}
       D(\sigma(e_0))+D(\tau(e_0))&\text{ if $e_0$ is not a loop}\\
       D(\sigma(e_0))&\text{ if $e_0$ is a loop}.
       \end{cases}
       \]
       Define the flow $\vr$ on $\G$ by $\vr(e):=\vr'(e)$ if $e\in  E(\G)\setminus\{e_0\}=E(\G')$ (in particular, $\iota_*(\vr)=\vr'$), and
       \begin{equation}
       \label{eq:phiD}
       \vr(e_0):=-D(\sigma(e_0))+\sum_{\substack{e\in \ora{E}(\G)\setminus\{e_0\}\\ \tau(e)=\sigma(e_0)}}\vr'(e)
        =\,\,D(\tau(e_0))-\sum_{\substack{e\in \ora{E}(\G)\setminus\{e_0\}\\ \tau(e)=\tau(e_0)}}\vr'(e).
       \end{equation}
       Then we have $\divi(\vr)=D$, which finishes the proof of the first part. \par
       Moreover, if $b_1(\Gamma)=b_1(\Gamma')$, then $e_0$ is not a loop, which means that the flow $\varphi$ defined above is the unique flow satisfying $\iota_*(\varphi)=\varphi'$ and $\divi(\vr)=D$. The proof is complete.
     \end{proof}

% \begin{remark}
%  \label{rem:phi0}
%   If we contract a loop $e_0$, then for each $\vr'\in \flowp(\Gamma',\iota_*(D))$ there is a unique $A$-flow $\vr\in \flowp(\G,D)$ such that $\vr(e_0)=0$ and $\iota_*(\vr)=\vr'$ (in this case, the expression defining $\vr(e_0)$ in Equation \eqref{eq:phiD} is zero, since $e_0$ is a loop).
%   Actually, we can lift $\vr'$ to a flow $\vr$ on $\Gamma$ having any value on $e_0$.
% \end{remark}

Next, we want to cast together the set $\flowp(\G,D)$ into a universal object over $\mathcal G_{g,n}$. First we need to define a universal divisor $D$. \par

\begin{definition}\label{def:DRGamma}
 Let $g$ and $n$ be non-negative integers.  
    An \emph{$A$-ramification sequence over $\mathcal G_{g,n}$} (or, simply, \emph{ramification sequence}) is an $(n+1)$-tuple $\R=(a_1,\ldots, a_n,b)$ of elements of $A$. The \emph{degree} of an $A$ ramification sequence, denoted by $\deg\mathcal R$, is given by 
    \[
\deg\mathcal R=a_1+a_2+\ldots+a_n+(2g-2)b.\] 
    Given an $A$-ramification sequence $\R=(a_1,\ldots, a_n,b)$ and a genus-g graph with $n$ legs $\G$, we define the $A$-divisor $D_{\R,\Gamma}\in \Div^0(\G,A)$ taking  a vertex $v \in V(\G)$ to
\[
D_{\R,\Gamma}(v)=(2w_\G(v)+\val_{\Gamma}(v)-2)b+\sum_{j\in L_\G^{-1}(v)}a_j.
\]
\end{definition}

From now on in this section, we will always consider $A$-ramifications of degree zero. 
 The following remark is straightforward.
 \begin{remark}\label{rem:push-flow1}
Let $\R=(a_1,a_2,\dots,a_n,b)$ be an $A$-ramification sequence over $\mathcal G_{g,n}$.  If $\iota\col \G\to \G'$ is a specialization of genus-$g$ graphs with $n$ legs, then
$\iota_*(D_{\R,\G})=D_{\R,\G'}$.
  \end{remark}

  \begin{definition}
   \label{def:posetflowR} Let $g$ and $n$ be non-negative integers and let $\mathcal{R}$ be an $A$-ramification sequence  over $\mathcal{G}_{g,n}$.d 
We define $\flow_{g}(A,\R)$ to be the category whose objects are pairs $(\G,\varphi)$ where $\G$ is a genus-$g$ stable graph with $n$ legs and $\varphi$ is an $A$-flow on $\G$ such that $\divi(\varphi)=D_{\R,\G}$, and whose morphisms are specializations $(\G,\varphi)\to (\G',\varphi')$.
The \emph{universal poset of flows} is the poset $\flowp_{g}(A,\R)$ whose elements are the isomorphism classes in $\flow_{g}(A,\R)$ and the partial order is given by $(\G,\varphi)\geq (\G',\varphi')$ whenever there is an specialization of pairs $(\G,\varphi)\to(\G',\varphi')$.
\end{definition}

%\begin{remark}
%Given a specialization $\iota\colon \Gamma\to\Gamma'$ for each Kirchhoff $A$-flow $\varphi$ in $\Gamma$ there is an induced Kirchhoff $A$-flow $\iota_*(\varphi)$ on $\Gamma'$ defined as $\iota_*(\varphi)(e)=\varphi(e)$ (via the natural inclusion $\ora{E}(\Gamma')\subseteq \ora{E}(\Gamma)$). In this case we say that $(\G',\varphi')$ is a \emph{specialization} of $(\G,\varphi)$, and we use the notation $(\G,\varphi)\to (\G',\varphi')$.
%\end{remark}

%\begin{definition}
%The category $\flow_{g,n}^A$ is the category whose objects are pairs $(\G,\varphi)$ where $\G$ is a stable graph with $n$-legs of genus $g$ and $\varphi$ is a Kirchhoff $A$-flow on $\G$ and whose morphisms are specializations $(\G,\varphi)\to (\G',\varphi')$.
%The poset $\flowp_{g,n}^A$ is the poset whose elements are the isomorphism classes in $\flow_{g,n}^A$ and the partial order is given by $(\G,\varphi)\geq (\G',\varphi')$ whenever there is an specialization $(\G,\varphi)\to(\G',\varphi')$.
%\end{definition}

\begin{remark}
For the $A$-ramification $\R=(0,\ldots,0)$, the $A$-flows  satisfying $\divi(\varphi)=D_{\R,\Gamma}=0$ are usually called \emph{Kirchhoff flows}. In this case, we will simply write $\mathcal F_{g,n}(A)$ and $\flow_{g,n}(A)$ instead of $\mathcal F_g(A,\mathcal R)$ and $\flow_{g}(A,\R)$.
\end{remark}

\begin{theorem}\label{thm:main1}
The poset $\flowp_{g}(A,\R)$ satisfies the following properties.
\begin{enumerate}[(i)]
    \item It has a unique minimal element which is the pair $(\G,\varphi)$ where $\G$ has only one vertex with weight $g$ and no edges (and $\varphi$ is trivial). In particular, the poset $\flowp_{g}(A,\R)$ is connected.
    \item Its maximal elements  are the pairs $(\G,\varphi)$ where $\G$ is a $3$-regular stable graph with $n$-legs.
    \item It is ranked of dimension $3g-3+n$, where the rank of a pair $(\G,\varphi)$ is given by $|E(\G)|$.
    \item There is a forgetful surjective map of ranked posets $\flowp_{g}(A,\R)\to \mathcal G_{g,n}$ taking  $(\G,\varphi)$ to $\G$. When $\R=(0,\ldots, 0)$, the fiber over the class of a graph $\Gamma$ is isomorphic to $H_1(\Gamma,A)/\Aut(\Gamma)$.
\end{enumerate}
\end{theorem}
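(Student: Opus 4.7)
The plan is to treat (i), (iii), and (iv) as largely formal consequences of the preceding machinery, and to concentrate the real combinatorial work on (ii).

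For (i), I would observe that the one-vertex graph $\G_0$ of weight $g$ (carrying all $n$ legs) has no edges, so the only flow on it is the trivial one. The divisor condition $\divi(0)=D_{\R,\G_0}$ reduces to $D_{\R,\G_0}(v)=(2g-2)b+\sum_i a_i=\deg\R=0$, which holds by hypothesis. For any $(\G,\vr)\in\flowp_g(A,\R)$, contracting all edges gives a specialization $\iota\col\G\to\G_0$, and Lemma \ref{lem:push-flow} combined with Remark \ref{rem:push-flow1} show $\divi(\iota_*(\vr))=\iota_*(D_{\R,\G})=D_{\R,\G_0}=0$, so $(\G_0,0)$ is reached from every element. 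Uniqueness of the minimum (and hence connectedness of $\flowp_g(A,\R)$) follows since $\G_0$ is the unique genus-$g$ stable graph with no edges.

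For (iv), the forgetful map $(\G,\vr)\mapsto\G$ descends to isomorphism classes and is visibly order-preserving; it is rank-preserving by construction because both ranks are $|E(\G)|$. Surjectivity follows from Lemma \ref{lem:surjective}, since $\deg D_{\R,\G}=\deg\R=0$ guarantees that some flow $\vr$ with $\divi(\vr)=D_{\R,\G}$ exists. The fiber over the class of $\G$ is $\flowp(\G,D_{\R,\G})/\Aut(\G)$; for $\R=(0,\ldots,0)$ we have $D_{\R,\G}=0$, so by definition $\flowp(\G,0)=H_1(\G,A)$, and the fiber is $H_1(\G,A)/\Aut(\G)$.

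For (ii) and (iii) together, I would first note that a covering in $\flowp_g(A,\R)$ corresponds to contracting a single edge, so $(\G,\vr)\mapsto|E(\G)|$ is compatible with covering relations. The minimum $(\G_0,0)$ has rank $0$, and a standard count shows any genus-$g$ stable graph with $n$ legs has $|E(\G)|\leq 3g-3+n$, with equality precisely when $\G$ is $3$-regular (all vertices have weight $0$ and $\val(v)+|L^{-1}(v)|=3$). Thus any $(\G,\vr)$ with $\G$ $3$-regular is maximal. Conversely, if $\G$ is not $3$-regular, there is a vertex $v$ with $2w(v)+\val(v)+|L^{-1}(v)|\geq 4$; I would construct a stable expansion $\G''$ of $\G$ with $|E(\G'')|=|E(\G)|+1$ by one of two local moves: if $w(v)\geq 1$, attach a new loop to $v$ and reduce its weight by $1$ (stability is preserved because $\val$ grows by $2$ while $w$ drops by $1$); if $w(v)=0$ and $\val(v)+|L^{-1}(v)|\geq 4$, split $v$ into two new vertices $v_1,v_2$ joined by a fresh edge, partitioning the incident half-edges and legs into two groups of size $\geq 2$ (so each new vertex satisfies $\val+|L^{-1}|\geq 3$). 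In either case, the specialization $\iota\col\G''\to\G$ satisfies $\iota_*(D_{\R,\G''})=D_{\R,\G}$ by Remark \ref{rem:push-flow1}, so Proposition \ref{prop:surjective} yields $\vr''\in\flowp(\G'',D_{\R,\G''})$ lifting $\vr$, producing a strict cover $(\G'',\vr'')>(\G,\vr)$. This simultaneously proves that maximal elements are exactly the $3$-regular ones and that every non-maximal element admits an upward cover, so all maximal chains have length $3g-3+n$.

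The main obstacle I anticipate is the stability bookkeeping in the expansion step of (ii): verifying in full generality that a non-$3$-regular vertex always admits at least one local splitting producing a stable graph, particularly when the vertex is weightless but carries many legs. Once that case analysis is settled, Proposition \ref{prop:surjective} supplies the flow lift automatically, and the remaining assertions of the theorem drop out formally.
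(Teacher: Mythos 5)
Your proposal is correct and its overall architecture matches the paper's: (i) by contracting all edges, (iv) by Lemma \ref{lem:surjective} and the definition of $H_1(\Gamma,A)$, and (ii)--(iii) by lifting flows along graph expansions via Proposition \ref{prop:surjective}. The one genuine divergence is in item (ii): where the paper simply cites the existence of a $3$-regular refinement of any stable graph (\cite[Theorem 3.2.5]{BMV}) and then applies Proposition \ref{prop:surjective}, you re-derive that refinement result from scratch via the two local moves (trade a unit of weight for a loop; split a weightless vertex of total valence $\geq 4$). Your case analysis is sound --- both moves preserve genus and stability, and the edge/leg partition into groups of size $\geq 2$ always exists when $\val(v)+|L^{-1}(v)|\geq 4$ --- so this buys self-containedness at the cost of redoing a known lemma. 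For (iii), your opening assertion that ``a covering corresponds to contracting a single edge'' is stated without justification; it is exactly the content of the paper's argument (factor any specialization dropping more than one edge through an intermediate graph, using that $\mathcal G_{g,n}$ is ranked, and push the flow forward), so you should make that factorization explicit rather than treat it as a remark. With that small addition the proof is complete.
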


\begin{proof}
We begin by proving item (i). Clearly, every pair $(\G,\vr)$ specializes to the pair $(\G',\vr')$ where $\G'$ is the graph with one vertex of weight $g$ and no edges, and $\vr'$ is the trivial flow (just contract all edges of $\G$). This proves (i).

To prove item (ii), first recall that, given a stable genus-$g$ graph $\G$ with $n$-legs, there is a specialization $\G'\to \G$, where $\G'$ is a $3$-regular stable genus-$g$ graph with $n$-legs (see \cite[Theorem 3.2.5]{BMV}). Moreover, if $\G$ is $3$-regular, every such  specialization $\G'\to \G$ is an isomorphism. Therefore, by Proposition \ref{prop:surjective}, every pair $(\G,\vr)$ admits a specialization $(\G',\vr')\to (\G,\vr)$ where $\G'$ is $3$-regular. This proves that every maximal element of $\flowp_{g}(A,\R)$ is of the form $(\G,\vr)$ where $\G$ is $3$-regular.

To prove item (iii), we prove that $\flowp_g(A,\R)$ is ranked with rank function given by $(\G,\vr)\mapsto |E(\G)|$. To do that, it is sufficient to prove that for each specialization $(\G_1,\vr_1)\geq (\G_2,\vr_2)$ such that $|E(\G_1)|>|E(\G_2)|+1$, then there exists $(\G_0,\vr_0)$ such that $ (\G_1,\vr_1) > (\G_0,\vr_0) >(\G_2,\vr_2)$. Since $\mathcal G_{g,n}$ is ranked, we can find $\G_0$ such that $\G_1>\G_0>\G_2$, i.e., there exist specializations $\iota\col \G_1\to \G_0$ and $\G_0\to \G_2$. Defining $\vr_0:=\iota_*(\vr_1)$, then we have that $ (\G_1,\vr_1) > (\G_0,\vr_0) >(\G_2,\vr_2)$.\par

The first statement of item (iv) is clear. When $\R=(0,0,\ldots,0)$ we have that the elements of $\flowp_{g}(A)$ are pairs $(\G,\vr)$ where $\divi(\vr)=0$, modulo isomorphisms. In particular, the fiber of the forgetful map $\mathcal F_{g,n}(A)\ra \mathcal G_{g,n}$ over the class of a graph  $\G$ consists of the set 
\[
\{(\G',\vr) \colon \G'\cong \G, \vr\in H_1(\G',A)\}/\sim,
\]
where $\sim$ denotes isomorphism of pairs, and so this fiber can be identified with $H_1(\Gamma,A)/\Aut(\Gamma)$.
\end{proof}

We compare universal posets of flows when we vary the abelian group $A$.

\begin{prop}
\label{prop:rankmap}
Let $f\col A\to B$ be a homomorphism of abelian groups and $\R=(a_1,a_2,\dots,a_n,\ell)$ be an $A$-ramification sequence. Then the natural map
\begin{align*}
f_*\colon\flowp_{g}(A,\R) &\to \flowp_{g}(B,f(\R))\\
   (\G,\varphi) &\to (\G, f\circ \varphi)
\end{align*}
is a map of ranked posets over $\mathcal G_{g,n}$. If $f$ is injective (respectively, surjective), then $f_*$ is injective (respectively, surjective). Here we denote by $f(\R)$ the $B$-ramification sequence $f(\R):=(f(a_1),f(a_2),\ldots, f(a_n),f(\ell))$.
\end{prop}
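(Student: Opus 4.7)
The plan is to verify the four assertions in order: well-definedness, compatibility with the specialization order and rank function, injectivity when $f$ is injective, and surjectivity when $f$ is surjective. The first three will be essentially formal; the main subtlety lies in the surjectivity step, where a naive lift of a $B$-flow need not have the correct divisor, and a correction by a Kirchhoff-type flow in $\ker f$ will be needed.

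First, I would check that $f_*$ is well-defined. Given an $A$-flow $\varphi$, the composition $f\circ\varphi$ satisfies $(f\circ\varphi)(e)+(f\circ\varphi)(-e)=f(\varphi(e)+\varphi(-e))=0$, so it is a $B$-flow. Its divisor at each vertex $v$ is $f(\divi(\varphi)(v))$, and applying $f$ to the explicit formula defining $D_{\mathcal R,\Gamma}$ in Definition \ref{def:DRGamma} yields $D_{f(\mathcal R),\Gamma}$; hence $(\Gamma,f\circ\varphi)$ lies in $\flowp_g(B,f(\mathcal R))$. Compatibility with specializations is immediate: if $\iota\colon\Gamma\to\Gamma'$ is a specialization and $\varphi'=\iota_*\varphi$, then the values of $f\circ\varphi$ and $f\circ\varphi'$ agree on the common edge set $E(\Gamma')\subset E(\Gamma)$, so $\iota_*(f\circ\varphi)=f\circ(\iota_*\varphi)$. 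Since $\Gamma$ is preserved by $f_*$, it is a map of ranked posets over $\mathcal G_{g,n}$ by Theorem \ref{thm:main1}(iv).

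For injectivity, assume $f$ is injective and that $f_*(\Gamma_1,\varphi_1)=f_*(\Gamma_2,\varphi_2)$. Then there is an isomorphism $\sigma\colon\Gamma_1\to\Gamma_2$ of genus-$g$, $n$-legged graphs with $\sigma^*(f\circ\varphi_2)=f\circ\varphi_1$, i.e.\ $f(\varphi_2(\sigma(e)))=f(\varphi_1(e))$ for every $e\in\vec E(\Gamma_1)$. Injectivity of $f$ gives $\sigma^*\varphi_2=\varphi_1$, so $(\Gamma_1,\varphi_1)=(\Gamma_2,\varphi_2)$ in $\flowp_g(A,\mathcal R)$.

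The main step is surjectivity. Assume $f$ is surjective and let $(\Gamma,\psi)\in\flowp_g(B,f(\mathcal R))$. Choose an orientation of $\Gamma$ and, using surjectivity of $f$, pick for each $e\in E(\vec\Gamma)$ a preimage $\widetilde\varphi(e)\in A$ of $\psi(e)$; extend by $\widetilde\varphi(-e):=-\widetilde\varphi(e)$. This gives an $A$-flow with $f\circ\widetilde\varphi=\psi$. By construction,
\[
f\bigl(\divi(\widetilde\varphi)(v)-D_{\mathcal R,\Gamma}(v)\bigr)=\divi(\psi)(v)-D_{f(\mathcal R),\Gamma}(v)=0,
\]
so the divisor $D:=D_{\mathcal R,\Gamma}-\divi(\widetilde\varphi)$ takes values in $\ker f$. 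Since $\mathcal R$ has degree zero, $\deg D_{\mathcal R,\Gamma}=0$, and $\divi(\widetilde\varphi)$ always has degree zero, so $D\in\Div^0(\Gamma,\ker f)$. By Lemma \ref{lem:surjective} applied to the group $\ker f$, there exists a $\ker f$-valued flow $\eta$ on $\Gamma$ with $\divi(\eta)=D$. Setting $\varphi:=\widetilde\varphi+\eta$ gives an $A$-flow satisfying $f\circ\varphi=\psi$ and $\divi(\varphi)=D_{\mathcal R,\Gamma}$, so $(\Gamma,\varphi)\in\flowp_g(A,\mathcal R)$ and $f_*(\Gamma,\varphi)=(\Gamma,\psi)$. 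The hardest part of the argument is precisely this lifting: the naive edgewise preimage has the wrong divisor, and the rescue is to adjust by a $\ker f$-valued flow whose existence is guaranteed by Lemma \ref{lem:surjective} together with the degree-zero hypothesis on $\mathcal R$.
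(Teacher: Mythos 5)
Your proof is correct and follows essentially the same route as the paper: the well-definedness and order/rank compatibility are the same formal checks, and the surjectivity argument is identical in structure — lift $\psi$ edgewise to an $A$-flow $\widetilde\varphi$, observe that $D_{\mathcal R,\Gamma}-\divi(\widetilde\varphi)$ is a degree-zero $\ker f$-valued divisor, and correct $\widetilde\varphi$ by a $\ker f$-valued flow supplied by Lemma \ref{lem:surjective}. The only difference is that you spell out the well-definedness and injectivity steps that the paper dismisses as clear.
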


\begin{proof}
  If $(\G,\varphi)\to (\G',\varphi')$ is a specialization, then there is a specialization $(\G,f\circ \varphi)\to (\G',f\circ\varphi')$. This means that the map $f_*$ preserves the partial order. Since the rank function of both  $\flowp_{g}(A,\R)$ and $\flowp_{g}(B,f(\R))$ is the number of edges of $\G$, then the map also preserves the rank. This means that  $f_*$ is a map of ranked posets. It is clear that $f_*$ commutes with the forgetful maps to $\mathcal G_{g,n}$.
  If $f$ is injective, then it is also clear that $f_*$ is injective as well.
  
  Assume that $f$ is surjective and consider a pair $(\G,\varphi_B) \in \flowp_{g}(B,f(\mathcal{R}))$. Since $f$ is surjective, there is an $A$-flow $\widetilde{\varphi}_A$ such that $\varphi_B=f\circ\widetilde{\varphi}_A$.   Since $\divi(\varphi_B)=D_{\Gamma,f(\mathcal{R})}$, we have that $\divi(\widetilde{\vr}_A)-D_{\Gamma,\mathcal{R}}$ is an element of $\Div^0(\G,\ker(f))$. By Lemma \ref{lem:surjective}, we have that there exists a flow $\vr_{\ker(f)}$ such that $\divi(\vr_{\ker(f)})=\divi(\widetilde{\vr}_A)-D_{\Gamma,\mathcal{R}}$. This means that the flow $\vr_A\coloneqq \widetilde{\vr}_A-\vr_{\ker(f)}$ is such that $\divi(\vr_A)=D_{\Gamma,\mathcal{R}}$ and $f\circ \vr_A=\vr_B$. This proves that $(\G,\vr_B)$ is the image of $(\G,\vr_A)$ and hence the map $f_*$ is surjective. 
\end{proof}

\section{Roots of $A$-divisors}

We study roots of $A$-divisors on a fixed tropical curve. 
Throughout the section, $X$ will be a tropical curve and $A$ will be  an abelian group. 
 We let $r$ be a non-zero integer such that the multiplication map  $A\xrightarrow{\cdot r}A$ is injective (which is necessary to have a well-defined division by $r$). We let $\D$ be an $A$-divisor on the tropical curve $X$ with degree in $rA$.
  We denote by $\roots^{r,\trop}(X,\D)$ the set of $r$-roots of $\D$, i.e.,
  \[
  \roots^{r,\trop}(X,\D):=\{[\D']; r\D'\text{ is equivalent to } \D\}.
  \]

Let $(\Gamma,\ell)$ be a model of $X$ whose set of vertices contains the support of $\D$. We fix an orientation $\overrightarrow{\G}$ of $\G$. 
Let $D$ be the $A$-divisor on $\G$ corresponding to $\D$.
 We let  $\overline{D}$ be the $(A/rA)$-divisor on $\G$ induced by $D$, i.e., obtained from $D$ by taking its coefficient modulo $rA$ (clearly, $\overline{D}$ has degree $0$). Given an $A$-flow $\varphi$ on $\G $, we let $\overline{\vr}$ be the $(A/rA)$-flow on $\G$ induced by $\vr$.
 
 \begin{definition}\label{def:divisor_flow}
 For each $A$-flow $\vr$ on $\G$ such that $\overline{\vr}\in \flowp(\G,\overline{D})$, we define the divisor $\D_{\vr}$ (sometimes denoted  $\D_{\vr,r}$, or $\D_{\vr,r,D}$, when it is important to specify the dependence on $r$ and $D$) on the tropical curve $X$ as follows. For every vertex  $v\in V(\G)$, we set
  \[
 \D_\vr(v)=\frac{\D(v)-\sum_{e\in E(\ora{\Gamma}), \sigma(e)=v}(r-1)\vr(e)-\sum_{e\in E(\ora{\Gamma}),\tau(e)=v}\vr(e)}{r}  .
 \]
 Notice that the above expression is well-defined since the numerator of the above expression is in $rA$ 
 and the multiplication by $r$ is injective.
 Given $e\in E(\overrightarrow{\G})$, we let $q_{e}$ be the point on $e$ at distance $\frac{\ell(e)}{r}$ from $\sigma(e)$. We set $\D_\vr(q_{e})=\vr(e)$ for every $e\in E(\ora{\G})$. Finally, we set $\D_{\vr}(p)=0$ for every  point $p$ on $X\setminus V(\G)$ distinct from every point $q_e$. We call $\D_\vr$ \emph{the divisor associated to the flow $\vr$}.
 \end{definition}

\begin{definition}\label{def:flow_divisor}
    For each $A$-divisor $\D'$ on $X$ such that $r\D'$ is equivalent to $\D$, we define the $(A/rA)$-flow $\overline{\varphi}_{\D'}$ on $\G$ as follows. Choose  a rational function $f=(\Gamma_f, \ell_f, \vr_f)$ on $X$ such that $\D-r\D'=\divi(f)$, where $\Gamma_f$ is a refinement of $\Gamma$.    
    Since $\D$ is supported on $V(\Gamma)$, for each $e\in E(\Gamma)$ and for each point $p \in e^\circ$, we have
    \[
    \divi(f)(p)=\D(p)-r\D'(p)=-r\D'(p) \in rA.
    \]   
    Thus 
    for each $e \in \ora{E}(\G)$, and  $e',e''\in \ora{E}(\Gamma_f)$ over $e$ we have that $\vr_f(e')\equiv \vr_f(e'')\bmod rA$.
    We set $\overline{\varphi}_{\D'}(e):= \vr_f(e')\bmod rA$ for any $e'\in \ora{E}(\Gamma_f)$ over $e$. 
  \end{definition}

  In the remainder of this section, 
  we will prove that the function
 \begin{align}\label{eq:Delta}
 \Delta := \Delta_{\Gamma} \colon \flowp(\Gamma, \overline{D})& \to \roots^{r,\trop}(X,\D)\\
   \overline{\vr} &\mapsto [\D_{\vr}]\nonumber
 \end{align}
 where $\vr\in \mathcal F(\G,D)$ is a lifting of a flow $\overline{\vr}\in\flowp(\Gamma, \overline{D})$,
 and the function
 \begin{align}\label{eq:Phi}
     \Phi = \Phi_{\Gamma} \colon \roots^{r, \trop}(X,\D) &\to  \flowp(\Gamma,\overline{D})\\
            \D'& \mapsto \overline{\vr}_{\D'}\nonumber
 \end{align}
 are well-defined and are one inverse to each other. We begin by proving some auxiliary results.

 \begin{lemma}
 \label{lem:segment}
 Let $e\in \ora{E}(\G)$.  Denote by
 $q_e$ and $q_{-e}$ the points at distance $\frac{\ell(e)}{r}$ from $\sigma(e)$ and $\tau(e)$, respectively.
 For every $a \in A$, there are rational functions $f$ and $f'$ on $X$ such that
 \begin{enumerate}[(i)]
     \item $\divi(f)=-(r-1)a\sigma(e)+raq_e-a\tau(e)$.
     \item $f'(\sigma(e))=f'(\tau(e))$ and $\divi(f')=-a\sigma(e)+aq_e+aq_{-e}-a\tau(e)$.
 \end{enumerate} 
 \end{lemma}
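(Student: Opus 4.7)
The plan is to exhibit $f$ and $f'$ as explicit triples $(\Gamma_f, \ell_f, \vr_f)$ in the sense defined above, where $\Gamma_f$ is the refinement of $\G$ obtained by inserting the prescribed interior vertices of $e$ and $\vr_f$ is supported on the edges over $e$. The divisor will then be read off directly from the definition of $\divi(\vr_f)$, while the rationality condition \eqref{eq:rationalfunction-gamma} will collapse to a single check along $e$: each inserted point is bivalent in $\Gamma_f$, so any cycle of $\Gamma_f$ either avoids the refined part of $e$ entirely or traverses all of its edges with a consistent induced orientation.

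For (i), I would insert $q_e$ into $e$ and orient the resulting edges $e_1, e_2$ so that $\sigma(e_1) = \sigma(e)$, $\tau(e_1) = \sigma(e_2) = q_e$, $\tau(e_2) = \tau(e)$, with lengths $\ell_f(e_1) = \ell(e)/r$ and $\ell_f(e_2) = (r-1)\ell(e)/r$. Setting $\vr_f(e_1) = (r-1)a$, $\vr_f(e_2) = -a$, and $\vr_f = 0$ on every other oriented edge of $\Gamma_f$, a direct evaluation of $\divi(\vr_f)$ at $\sigma(e), q_e, \tau(e)$ gives $-(r-1)a$, $ra$, $-a$ respectively, while the cycle condition reduces to $(r-1)a \otimes \ell(e)/r + (-a) \otimes (r-1)\ell(e)/r = 0$ in $A \otimes_{\mathbb{Z}} \mathbb{R}$, which is immediate.

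For (ii), if $r \geq 3$ then $q_e \neq q_{-e}$, and I would insert both points into $e$, producing three oriented edges $e_1, e_2, e_3$ (listed from $\sigma(e)$ to $\tau(e)$) of lengths $\ell(e)/r$, $(r-2)\ell(e)/r$, $\ell(e)/r$. Setting $\vr_{f'}(e_1) = a$, $\vr_{f'}(e_2) = 0$, $\vr_{f'}(e_3) = -a$, and zero elsewhere, a direct check gives $\divi(\vr_{f'}) = -a\sigma(e) + a q_e + a q_{-e} - a\tau(e)$, and the identity $a \otimes \ell(e)/r + 0 + (-a) \otimes \ell(e)/r = 0$ in $A \otimes_{\mathbb{Z}} \mathbb{R}$ simultaneously verifies the cycle condition and the equality $f'(\sigma(e)) = f'(\tau(e))$, understood as the vanishing of the integral of the slopes of $f'$ along $e$. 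When $r = 2$ the middle edge disappears and $q_e = q_{-e}$; the same construction with $\vr_{f'}(e_1) = a$, $\vr_{f'}(e_2) = -a$ on the two halves of $e$ produces the divisor $-a\sigma(e) + 2aq_e - a\tau(e)$. The only real obstacle is the bookkeeping needed to confirm that no cycle condition beyond the one computed along $e$ arises, which follows from the bivalency of the inserted vertices noted in the first paragraph.
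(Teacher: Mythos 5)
Your proposal is correct and follows essentially the same route as the paper: both exhibit $f$ and $f'$ as explicit flows supported on a refinement of $e$ and verify the divisor and the cycle condition \eqref{eq:rationalfunction-gamma} by a single computation along $e$ (the paper inserts both $q_e$ and $q_{-e}$ for part (i) and delegates part (ii) to Lemma \ref{lem:principal}, whose proof is the same three-edge construction you write out). Your explicit treatment of the $r=2$ degeneration in (ii) and the bivalency remark justifying that no further cycle conditions arise are welcome details that the paper leaves implicit.
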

 
 \begin{proof}     
    We prove (i). Consider the refinement $\Gamma_f$ of $\Gamma$ obtained by inserting two vertices $v_e$ and $v_{-e}$ over $e$. We let $e_0,e_1,e_2\in \ora{E}(\Gamma)$ be the edges over $e$ such that $\sigma(e_0)=\sigma(e)$, $\tau(e_0)=\sigma(e_1)=v_e$, $\tau(e_1)=\sigma(e_2)=v_{-e}$ and $\tau(e_2)=\tau(e)$. We let $\ell_f$ be the length function on $\Gamma_f$ which agrees with $\ell$ on every edge $e'\in  E(\Gamma)\setminus\{e\}$ and $\ell_f(e_0)=\ell_f(e_2)=\frac{\ell(e)}{r}$ and $\ell_f(e_1)=\frac{\ell(e)(r-2)}{r}$. In this way $(\Gamma_f,\ell_f)$ is a model of $X$ and $v_{e}$ and $v_{-e}$ correspond to the points $q_e$ and $q_{-e}$ on $X$. Consider the rational function $f=(\Gamma_f, \ell_f,\vr_f)$ on $X$, where $\vr_f$ is the flow on $\Gamma_f$ taking an element $e'\in \ora{E}(\Gamma_f)$ to 
    \[
    \vr_f(e')=\begin{cases}
          \pm (r-1)a& \text{ if }e'=\pm e_0\\
          \mp a& \text{ if }e'=\pm e_1,\pm e_2\\
          0& \text{ otherwise.}
    \end{cases}
    \]
    Then $f$ is a rational function on $X$ (see Equation \eqref{eq:rationalfunction}), because $(r-1)a\otimes \frac{\ell(e)}{r}+(-a)\otimes \frac{\ell(e)(r-1)}{r}=0$. Moreover, we have that $\divi(f)=-(r-1)a\sigma(e)+raq_e-a\tau(e)$. This concludes the proof of (i).
    
Notice that (ii) is a particular case of Lemma \ref{lem:principal}.
 \end{proof}
 
 In the next three results we will prove that the map $\Delta$ in Equation \eqref{eq:Delta} is well-defined.
 We begin by proving that $\Delta$  is independent of the choice of orientation $\ora{\Gamma}$ of $\Gamma$.
  \begin{prop}
  \label{prop:Delta_orientation}
 The class $[\D_{\vr}]$ 
 does not depend on the choice  of orientation of $\Gamma$.
 \end{prop}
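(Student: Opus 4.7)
The plan is to reduce the statement to the case of flipping a single oriented edge and then identify the resulting change by a principal divisor produced by Lemma \ref{lem:segment}(ii).

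First I would observe that any two orientations of $\Gamma$ differ by flipping the orientation on some subset of the edges, so by induction it suffices to treat the case where $\ora{\Gamma}'$ is obtained from $\ora{\Gamma}$ by flipping a single edge $e_0 \in E(\ora{\Gamma})$, i.e.\ $E(\ora{\Gamma}') = (E(\ora{\Gamma}) \setminus \{e_0\}) \cup \{-e_0\}$. Write $\D_\vr^{\ora{\Gamma}}$ and $\D_\vr^{\ora{\Gamma}'}$ for the divisors from Definition \ref{def:divisor_flow} computed with the two orientations.

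Next I would compute the difference $\D_\vr^{\ora{\Gamma}'} - \D_\vr^{\ora{\Gamma}}$ point by point. Every vertex outside $\{\sigma(e_0),\tau(e_0)\}$ is unaffected, and on every edge distinct from $e_0$ the two divisors agree; so only four points contribute. Using $\sigma(-e_0) = \tau(e_0)$, $\tau(-e_0) = \sigma(e_0)$ and $\vr(-e_0) = -\vr(e_0)$, the numerator in Definition \ref{def:divisor_flow} changes by exactly $r\vr(e_0)$ at each of $\sigma(e_0)$ and $\tau(e_0)$, so after dividing by $r$ the vertex coefficients both shift by $+\vr(e_0)$. On the interior of $e_0$, the coefficient at the old midpoint $q_{e_0}$ goes from $\vr(e_0)$ to $0$, while the coefficient at $q_{-e_0}$ (which is the new midpoint, at distance $\ell(e_0)/r$ from $\tau(e_0)$) goes from $0$ to $\vr(-e_0) = -\vr(e_0)$. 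Hence
\[
\D_\vr^{\ora{\Gamma}'} - \D_\vr^{\ora{\Gamma}} = \vr(e_0)\sigma(e_0) - \vr(e_0)q_{e_0} - \vr(e_0)q_{-e_0} + \vr(e_0)\tau(e_0).
\]

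Finally I would apply Lemma \ref{lem:segment}(ii) with $e = e_0$ and $a = \vr(e_0)$ to produce a rational function $f'$ on $X$ with $\divi(f') = -\vr(e_0)\sigma(e_0) + \vr(e_0)q_{e_0} + \vr(e_0)q_{-e_0} - \vr(e_0)\tau(e_0)$. Comparing, $\D_\vr^{\ora{\Gamma}'} - \D_\vr^{\ora{\Gamma}} = -\divi(f')$, so the two divisors are linearly equivalent and $[\D_\vr^{\ora{\Gamma}'}] = [\D_\vr^{\ora{\Gamma}}]$. There is no real conceptual obstacle here; the only care required is in the sign bookkeeping and in remembering that reversing $e_0$ also replaces the point $q_{e_0}$ (defined via $\sigma(e_0)$) by $q_{-e_0}$ (defined via $\tau(e_0)$), which is precisely what Lemma \ref{lem:segment}(ii) is tailored to absorb.
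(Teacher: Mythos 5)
Your proof is correct and follows essentially the same route as the paper's: reduce to flipping a single edge $e_0$, compute the difference of the two associated divisors supported at $\sigma(e_0),\tau(e_0),q_{e_0},q_{-e_0}$, and recognize it as (minus) the principal divisor from Lemma \ref{lem:segment}(ii) with $a=\vr(e_0)$. Your explicit vertex-by-vertex bookkeeping matches the paper's displayed formula up to the overall sign coming from computing $\D'_\vr-\D_\vr$ rather than $\D_\vr-\D'_\vr$.
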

 \begin{proof}
 It is enough to prove the result when two orientations  differ on just one edge $e_0\in \ora{E}(\Gamma)$. In this case, if $\D_{\vr}$ and $\D_{\vr}'$ are the divisors associated to the flow $\vr$ constructed using the two orientations, then 
 \begin{equation}\label{eq:difference}
 \D_{\vr}-\D'_{\vr}=-\vr(e_0)\sigma(e_0)+\vr(e_0)q_{e_0}+\vr(e_0)q_{-e_0}-\vr(e_0)\tau(e_0).
 \end{equation}
 The divisor in Equation \eqref{eq:difference} is principal, as we can see by taking $a = \vr(e_0)$ in Lemma \ref{lem:segment} (ii).
 \end{proof}

 \begin{prop}
 \label{prop:Delta_lifting}
 The class $[\D_{\vr}]$ 
 %depends only on $\overline{\vr}$ and
 does not depend on the choice of the lifting $\vr\in C_1(\G,A)$ of $\ol{\vr}$.
 \end{prop}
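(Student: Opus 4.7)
The plan is to take two liftings $\vr_1, \vr_2 \in C_1(\Gamma, A)$ of $\ol{\vr}$ and show that the divisor difference $\D_{\vr_1} - \D_{\vr_2}$ is principal on $X$. First I would observe that since $\vr_1(e) \equiv \vr_2(e) \pmod{rA}$ for every $e \in \ora{E}(\Gamma)$ and multiplication by $r$ is injective on $A$, there is a unique $A$-flow $\psi$ on $\Gamma$ such that $\vr_1 - \vr_2 = r\psi$; the flow condition $\psi(e) + \psi(-e) = 0$ follows from the corresponding relation for $\vr_1 - \vr_2$ by dividing by $r$.

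Next, I would expand $\D_{\vr_1} - \D_{\vr_2}$ using Definition \ref{def:divisor_flow}. At a vertex $v \in V(\Gamma)$, after dividing the numerator by $r$ and substituting $\vr_1 - \vr_2 = r\psi$, the difference equals
\[
-(r-1)\sum_{e\in E(\ora{\Gamma}),\,\sigma(e)=v}\psi(e)\;-\;\sum_{e\in E(\ora{\Gamma}),\,\tau(e)=v}\psi(e),
\]
while at each subdivision point $q_e$ it equals $r\psi(e)$, and it vanishes at every other point of $X$. Regrouping the contributions edge by edge, I would obtain
\[
\D_{\vr_1} - \D_{\vr_2} \;=\; \sum_{e \in E(\ora{\Gamma})} \bigl(-(r-1)\psi(e)\,\sigma(e) \,+\, r\psi(e)\,q_e \,-\, \psi(e)\,\tau(e)\bigr).
\]

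To conclude, I would apply Lemma \ref{lem:segment}(i) with $a = \psi(e)$ to each edge $e \in E(\ora{\Gamma})$: the corresponding summand is exactly the principal divisor produced by that lemma. Hence $\D_{\vr_1} - \D_{\vr_2}$ is a sum of principal divisors and therefore principal, so $[\D_{\vr_1}] = [\D_{\vr_2}]$. The main point is the bookkeeping in the previous paragraph, namely recognizing that after regrouping by edges the coefficient pattern $\bigl(-(r-1),\,r,\,-1\bigr)$ on $\sigma(e), q_e, \tau(e)$ matches exactly the principal divisor provided by Lemma \ref{lem:segment}(i); existence of $\psi$ and the division by $r$ are formal consequences of the standing hypothesis that multiplication by $r$ is injective on $A$.
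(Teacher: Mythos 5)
Your proof is correct and follows essentially the same route as the paper's: both arguments reduce the difference $\D_{\vr_1}-\D_{\vr_2}$ to a sum over edges of divisors of the form $-(r-1)a\,\sigma(e)+ra\,q_e-a\,\tau(e)$ and invoke Lemma \ref{lem:segment}(i). The only cosmetic difference is that the paper changes the lifting one edge at a time while you handle all edges simultaneously via the flow $\psi$ with $\vr_1-\vr_2=r\psi$; the substance is identical.
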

 \begin{proof}
    Consider an edge $e_0\in \ora{E}(\G)$.
    It is enough to prove that, if $\vr'$ is a flow on $\G$ such that $\vr'( e_0)=\vr( e_0)+ ra$ for some $a\in A$, and $\vr'(e)=\vr(e)$ for every $e\in \ora{E}(\Gamma)\setminus\{\pm e_0\}$, then $\D_{\vr'}$ is equivalent to $\D_{\vr}$.
    Indeed, we have $\D_{\vr'}-\D_{\vr}=-(r-1)a\sigma(e_0)+raq_{e_0}-a\tau(e_0)$. By Lemma \ref{lem:segment} (i) there exists a rational function $f$ on $X$ such that $\divi(f)=-(r-1)a\sigma(e_0)+raq_{e_0}-a\tau(e_0)$. This means that $\D_{\vr'}-\D_{\vr}$ is principal on $X$, and we are done.
 \end{proof}

    \begin{Cons}\label{cons:GammaS}
      Let $X$ be a tropical curve with model $(\Gamma,\ell)$. For a set of points $S \subset X\setminus V(\Gamma)$ we define the model $(\Gamma_S,\ell_S)$ of $X$ such that $V(\Gamma_S)=V(\Gamma)\cup S$. Notice that $\Gamma_S$ is a refinement of $\Gamma$. For every orientation  $\ora{\Gamma}$ on $\Gamma$, there is a natural induced orientation $\ora{\Gamma_S}$ on $\G_S$. In this case, given $e\in E(\ora{\G})$, we denote by $e^{(j)}_S$, for $j=0,\ldots, m_e$, the oriented edges in $E(\ora{\Gamma_S})$ over $e$ such that $\sigma(e^{(0)}_S)=\sigma(e)$, $\tau(e^{(m_e)}_S)=\tau(e)$ and $\sigma(e^{(i+1)}_S)=\tau(e^{(i)}_S)$. We call $(\Gamma_S,\ell_S)$ \emph{the model of $X$ induced by} $S$. Notice that if $\vr$ is a flow on $\Gamma$, then we have an induced flow $\vr_S$ on $\Gamma_S$ such that $\vr_S(e^{(j)}_S)=\vr(e)$ for every $e\in E(\overrightarrow{\Gamma})$ and  $j=0,\dots,m_e$.
    \end{Cons}
 
 \begin{corollary}
 \label{cor:flow_bijection}
  Let $D$ be a divisor on $\Gamma$. Consider a set $S\subset X\setminus V(\Gamma)$. Let $D_S$ be the induced divisor on $\Gamma_S$ and $\overline{D}_S$ the associated $A/rA$-divisor. Then the function \begin{align*}
      \flowp(\Gamma, \overline{D})& \to \flowp(\Gamma_S, \overline{D}_S)\\
       \vr & \mapsto \vr_S
  \end{align*}
 is a bijection whose inverse is $\iota_*\col \flowp(\Gamma_S, D_S)\to \flowp(\Gamma,D)$ where $\iota$ is any specialization $\iota\col \Gamma_S\to \Gamma$ such that $\iota^*(e)$ is an edge over $e$ for every $e\in E(\Gamma)$.
\end{corollary}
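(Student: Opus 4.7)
The plan is to identify the map $\vr \mapsto \vr_S$ as the set-theoretic inverse of $\iota_*$, using Proposition \ref{prop:surjective} to supply the bijectivity on the $\iota_*$ side. First I would verify that the map is well-defined, i.e.\ that $\vr_S \in \flowp(\Gamma_S, \overline{D}_S)$ whenever $\vr \in \flowp(\Gamma, \overline{D})$. At each subdivision vertex $p \in S$, there are precisely two oriented edges of $\Gamma_S$ with target $p$, both lying over a single edge $e \in \ora{E}(\Gamma)$, and they contribute $\vr(e)$ and $-\vr(e)$ to $\divi(\vr_S)(p)$, yielding $0 = \overline{D}_S(p)$. At each $v \in V(\Gamma)$, the oriented edges of $\Gamma_S$ with target $v$ are in a natural bijection with those of $\Gamma$ with target $v$, on which $\vr_S$ agrees with $\vr$, so $\divi(\vr_S)(v) = \divi(\vr)(v) = \overline{D}(v) = \overline{D}_S(v)$.

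Next I would invoke Proposition \ref{prop:surjective} with the abelian group $A/rA$ in place of $A$. Since $\Gamma_S$ is obtained from $\Gamma$ by subdividing edges (no loops are created or destroyed), we have $b_1(\Gamma_S) = b_1(\Gamma)$. Moreover, any specialization $\iota\colon \Gamma_S \to \Gamma$ of the form described in the statement satisfies $\iota_*(\overline{D}_S) = \overline{D}$, since the contracted edges meet $V(\Gamma)$ only at their endpoints and $\overline{D}_S$ vanishes on $S$. The proposition then yields that $\iota_*\colon \flowp(\Gamma_S, \overline{D}_S) \to \flowp(\Gamma, \overline{D})$ is a bijection.

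Finally, I would check that $\vr \mapsto \vr_S$ is a right inverse of $\iota_*$: by construction, $\iota_*(\vr_S)(e) = \vr_S(\iota^*(e)) = \vr(e)$ for every $e \in \ora{E}(\Gamma)$, since $\vr_S$ takes the constant value $\vr(e)$ on every edge of $\Gamma_S$ over $e$. As $\iota_*$ is already known to be bijective, this right inverse is automatically the two-sided inverse. I do not expect a genuine obstacle; the essential content is the observation that a flow on $\Gamma_S$ whose divisor is supported on $V(\Gamma)$ must be constant along each subdivided edge, which is precisely the mechanism behind the bijectivity clause of Proposition \ref{prop:surjective} in this setting.
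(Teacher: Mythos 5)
Your proposal is correct and follows essentially the same route as the paper, whose proof is a one-line appeal to Construction \ref{cons:GammaS} and Proposition \ref{prop:surjective}; you have simply filled in the details of exactly that argument (the divisor check at subdivision vertices, the equality $b_1(\Gamma_S)=b_1(\Gamma)$ that triggers the bijectivity clause of Proposition \ref{prop:surjective}, and the right-inverse computation).
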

\begin{proof}
   The result follows directly from Construction \ref{cons:GammaS} and Proposition \ref{prop:surjective}.
\end{proof}

 \begin{prop}\label{prop:Delta_refinement}
   Consider a set of points $S\subset X\setminus V(\Gamma)$. Let $\vr$ be a flow on $\Gamma$ and $\vr_S$ be the induced flow on $\Gamma_S$. Then $[\D_\vr]=[\D_{\vr_S}]$.
 \end{prop}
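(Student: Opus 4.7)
The plan is to reduce to the case of adding a single point and then show that the difference $\D_{\vr_S}-\D_\vr$ is principal on $X$, using Lemma \ref{lem:principal}. By induction on $|S|$, it is enough to treat the case $S=\{p\}$ with $p$ in the interior of an oriented edge $e\in E(\overrightarrow{\Gamma})$ at distance $d$ from $\sigma(e)$. Let $e_1,e_2\in E(\overrightarrow{\Gamma_S})$ be the two edges over $e$ with the induced orientation. By Construction \ref{cons:GammaS} we have $\vr_S(e_1)=\vr_S(e_2)=\vr(e)$.

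The first step is to unwind Definition \ref{def:divisor_flow} and compute $\D_{\vr_S}-\D_\vr$ pointwise. At every vertex $v\neq p$, the set of edges incident to $v$ and their flow values are the same in $\Gamma$ and $\Gamma_S$, so $\D_{\vr_S}(v)=\D_\vr(v)$. At the new vertex $p$, using $\D(p)=0$, the formula gives
\[
\D_{\vr_S}(p)=\frac{0-(r-1)\vr(e)-\vr(e)}{r}=-\vr(e).
\]
Outside the vertex sets, $\D_\vr$ is supported on $q_e\in e^\circ$ with coefficient $\vr(e)$, while $\D_{\vr_S}$ is supported on $q_{e_1},q_{e_2}\in e^\circ$, each with coefficient $\vr(e)$. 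Assuming the four points $q_{e_1},p,q_e,q_{e_2}$ are pairwise distinct, we obtain
\[
\D_{\vr_S}-\D_\vr=\vr(e)\bigl(-p+q_{e_1}+q_{e_2}-q_e\bigr),
\]
supported on $e$. A direct distance computation shows: if $d<\ell(e)/r$, the four points lie along $e$ from $\sigma(e)$ to $\tau(e)$ in the order $q_{e_1},p,q_e,q_{e_2}$ with $d(q_{e_1},p)=d(q_e,q_{e_2})=d(r-1)/r$; if $d>\ell(e)/r$, the order is $q_{e_1},q_e,p,q_{e_2}$ with $d(q_{e_1},q_e)=d(p,q_{e_2})=(\ell(e)-d)/r$. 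In either case Lemma \ref{lem:principal}, applied with $a=-\vr(e)$ and the appropriate choice of $p_1,p_2,p_3,p_4$, shows that $\D_{\vr_S}-\D_\vr$ is principal on $X$.

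The main obstacle is the degenerate case $d=\ell(e)/r$, where $p=q_e$ and the four points collapse so that Lemma \ref{lem:principal} does not apply directly. To circumvent this, I would choose an auxiliary point $p_0\in e^\circ$ avoiding the finitely many bad positions $\{q_e,\ell(e)/r^2,\ell(e)(2r-1)/r^2\}$ and chain three applications of the generic case: first, $[\D_\vr]=[\D_{\vr_{\{p_0\}}}]$ on $\Gamma$ (using $p_0\neq q_e$); second, $[\D_{\vr_{\{p_0\}}}]=[\D_{\vr_{\{p_0,p\}}}]$ on $\Gamma_{\{p_0\}}$, where the conditions on $p_0$ guarantee that, on the edge of $\Gamma_{\{p_0\}}$ containing $p$, the ``$q$''-point differs from $p$; and third, $[\D_{\vr_{\{p\}}}]=[\D_{\vr_{\{p,p_0\}}}]=[\D_{\vr_{\{p_0,p\}}}]$ on $\Gamma_{\{p\}}$ (a similar verification shows the relevant $q$-points are avoided). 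Combining these equalities yields $[\D_\vr]=[\D_{\vr_{\{p\}}}]$, completing the proof.
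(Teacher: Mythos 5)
Your proof is correct and follows essentially the same route as the paper: reduce to adding a single point, compute the difference $\D_{\vr_S}-\D_{\vr}$ as a four-point divisor $\vr(e)\bigl(-p+q_{e_1}+q_{e_2}-q_e\bigr)$ supported on $e$, and apply Lemma \ref{lem:principal} after distinguishing the two possible orderings of the points. Your explicit perturbation argument for the degenerate position $p=q_e$ is a welcome extra level of care: the paper's proof tacitly assumes the four points are distinct, and either your three-step chain or a direct extension of Lemma \ref{lem:principal} to the case $p_2=p_3$ is needed to cover that configuration.
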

 \begin{proof}
    It is sufficient to prove the proposition when $S$ is a single point $p_0$ in the interior $e^\circ$ of some edge $e\in E(\Gamma)$. Consider the points $q_e$, $q_{e^{(0)}_S}$, $q_{e^{(1)}_S}$ of $X$ (recall the notation in Definition \ref{def:divisor_flow}).
    In this case, we have that
        \[
    \D_{\vr_S}-\D_{\vr} = q_{e^{(0)}_S} - q_e - \sigma(e^{(1)}_S) + q_{e^{(1)}_S}.
        \]
        The distance between $q_{e^{(0)}}$ and $q_e$ is equal to $\frac{\ell(e)-\ell(e^{(0)})}{r}$ and the distance between $\sigma(e^{(1)})$ and $q_{e^{(1)}}$ is equal to $\frac{\ell(e^{(1)}_S)}{r} = \frac{\ell(e)-\ell(e^{(0)}_S)}{r}$.    The result follows from Lemma \ref{lem:principal}. Notice that it can happen that $\sigma(e^{(1)}_S)$ is between $q_{e^{(0)}_S}$ and $q_e$: in this case, the distance between $q_{e^{(0)}_S}$ and $\sigma(e^{(1)}_S)$ is the same as the distance between $q_e$ and $q_{e^{(1)}_S}$.
 \end{proof}

 \begin{prop}
 \label{prop:Delta_root}
 Consider a flow $\overline{\vr}\in \flowp(\Gamma,\overline{D})$. For every lifting $\vr\in C_1(\G,A)$ of $\overline{\vr}$, we have that $r\D_{\vr}$ is equivalent to $\D$.
 \end{prop}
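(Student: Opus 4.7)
The plan is to compute $r\D_{\vr}-\D$ explicitly and recognize it as a sum, over oriented edges, of divisors of the shape appearing in Lemma \ref{lem:segment}~(i).

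First, I would verify that $\D_{\vr}$ is a well-defined divisor, i.e.\ that for every $v \in V(\G)$ the numerator
\[
\D(v) - \sum_{e \in E(\ora{\G}),\,\sigma(e)=v}(r-1)\vr(e) - \sum_{e \in E(\ora{\G}),\,\tau(e)=v}\vr(e)
\]
lies in $rA$. Modulo $rA$, the $(r-1)\vr(e)$ factor becomes $-\vr(e)$, so the expression reduces to $D(v) - \divi(\vr)(v) \bmod rA$ (using that $\vr(-e)=-\vr(e)$ and the usual formula for $\divi(\vr)(v)$ in terms of the chosen orientation). Since $\overline{\vr} \in \flowp(\G, \overline{D})$, we have $\divi(\vr) \equiv D \pmod{rA}$, so the numerator is in $rA$ as needed; injectivity of multiplication by $r$ on $A$ then gives a well-defined value.

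Next, I would compute $r\D_\vr - \D$ point by point. At a vertex $v$ the coefficient is $-\sum_{\sigma(e)=v}(r-1)\vr(e) - \sum_{\tau(e)=v}\vr(e)$; at the interior point $q_e$ for $e \in E(\ora{\G})$ the coefficient is $r\vr(e)$; and at all other points it is $0$. Group these contributions by oriented edge: for each $e \in E(\ora{\G})$, set
\[
\D_e := -(r-1)\vr(e)\,\sigma(e) + r\,\vr(e)\,q_e - \vr(e)\,\tau(e),
\]
and check directly that $r\D_\vr - \D = \sum_{e\in E(\ora{\G})} \D_e$ by matching coefficients vertex by vertex and point by point.

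Finally, Lemma \ref{lem:segment}~(i) applied with $a = \vr(e)$ produces, for each $e \in E(\ora{\G})$, an $A$-rational function $f_e$ on $X$ with $\divi(f_e) = \D_e$. Taking the formal sum of these rational functions (on a common refinement containing all the $q_e$) yields an $A$-rational function $f$ on $X$ with $\divi(f) = r\D_\vr - \D$, so $r\D_\vr$ is equivalent to $\D$. The combination with Propositions \ref{prop:Delta_orientation} and \ref{prop:Delta_lifting} then shows that the orientation-choice and lifting-choice performed here are harmless. The only mildly delicate step is the well-definedness check, since it is where the hypothesis $\divi(\overline{\vr})=\overline{D}$ and the injectivity of $\cdot r\colon A\to A$ both enter; once that is in place, the rest is a routine sum of principal divisors coming from Lemma \ref{lem:segment}~(i).
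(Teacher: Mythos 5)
Your proposal is correct and follows essentially the same route as the paper: the paper's proof also writes $r\D_{\vr}-\D=\sum_{e\in E(\ora{\G})}\D_e$ with $\D_e=\vr(e)(-(r-1)\sigma(e)+rq_e-\tau(e))$ and invokes Lemma \ref{lem:segment}~(i) to conclude each $\D_e$ is principal. The well-definedness check you carry out is already recorded in Definition \ref{def:divisor_flow}, so it need not be repeated here.
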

 
\begin{proof}
For every $e\in E(\ora{\G})$, we set $\D_e:=\vr(e)(-(r-1)\sigma(e)+rq_e-\tau(e))$.
By Definition \ref{def:divisor_flow}, we have
\begin{equation}
\label{eq:DphiDe}
r\D_{\vr}=\D+\sum_{e\in E(\ora{\G})} \D_e.    
\end{equation}
By Lemma \ref{lem:segment} (i), the divisor  $\D_e$ is principal on $X$, hence the result follows.
\end{proof}
\begin{corollary}
\label{cor:Delta_well}
 The map $\Delta$ is well-defined.
\end{corollary}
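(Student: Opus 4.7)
The plan is to assemble the three preceding propositions, which together cover every choice that has been made in evaluating $\Delta$ on an element $\overline{\vr} \in \flowp(\Gamma, \overline{D})$.

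First, I would verify that a lift $\vr \in C_1(\Gamma, A)$ of $\overline{\vr}$ always exists. After fixing any orientation $\ora{\Gamma}$, the surjectivity of $A \to A/rA$ lets me lift each value $\overline{\vr}(e)$ for $e \in E(\ora{\Gamma})$ to some $\vr(e) \in A$; setting $\vr(-e) := -\vr(e)$ then produces an $A$-flow on $\G$. The hypothesis $\overline{\vr} \in \flowp(\Gamma, \overline{D})$ gives exactly the congruence $\divi(\vr)(v) \equiv D(v) \pmod{rA}$ needed for the numerator appearing in Definition \ref{def:divisor_flow} to lie in $rA$, so that the formal division by $r$, which is well-defined because multiplication by $r$ is injective on $A$, produces a bona fide element $\D_\vr(v) \in A$. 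Thus the divisor $\D_\vr$ on $X$ of Definition \ref{def:divisor_flow} is genuinely defined.

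Next, I would invoke the three preparatory propositions in sequence: Proposition \ref{prop:Delta_orientation} shows that the class $[\D_\vr]$ does not depend on the choice of orientation $\ora{\Gamma}$; Proposition \ref{prop:Delta_lifting} shows that it does not depend on the choice of lift $\vr$ of $\overline{\vr}$; and Proposition \ref{prop:Delta_root} shows that $r\D_\vr$ is linearly equivalent to $\D$, so $[\D_\vr]$ lies in $\roots^{r,\trop}(X, \D)$. Since every choice made in Definition \ref{def:divisor_flow} is now irrelevant up to linear equivalence and the target class is confirmed to land in the correct set, the map $\Delta \colon \flowp(\Gamma, \overline{D}) \to \roots^{r,\trop}(X, \D)$ of \eqref{eq:Delta} is well-defined.

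The corollary is essentially bookkeeping: all the substantive geometric content has been packaged into the three propositions, and I do not anticipate any additional obstacle. The only thing to be careful about is enumerating every choice implicit in Definition \ref{def:divisor_flow} (an orientation, a lift, and the assertion that the output lies in the target set) and matching each to the correct preparatory result.
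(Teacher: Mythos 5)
Your proposal is correct and matches the paper's proof, which simply combines Propositions \ref{prop:Delta_orientation}, \ref{prop:Delta_lifting}, and \ref{prop:Delta_root}. The extra verification that a lift exists and that the division by $r$ in Definition \ref{def:divisor_flow} makes sense is already noted in the paper at the point of that definition, so your argument is just a slightly more explicit version of the same bookkeeping.
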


\begin{proof}
   Just combine Propositions \ref{prop:Delta_orientation}, \ref{prop:Delta_lifting}, and \ref{prop:Delta_root}.
\end{proof}

Recall that the definition of the map $\Delta=\Delta_{\Gamma}$ in Equation \eqref{eq:Delta} is given was given after fixing a model $\Gamma$ of $X$. 

\begin{corollary}
\label{cor:Delta_refinement}
Consider a set $S\subset X\setminus V(\Gamma)$. We have a natural commutative diagram:
 \[
 \begin{tikzcd}[row sep = tiny]
  \flowp(\Gamma, \overline{D}) \arrow[rd, "\Delta_\Gamma"] \arrow[dd, leftrightarrow,"\iota_*"] & \\
  &  \Roots^{r,\trop}(X, \mathcal{D}) \\
\flowp(\Gamma_S, \overline{D}_S) \arrow[ru, "\Delta_{\Gamma_S}"']
 \end{tikzcd}
 \]
 where $\iota_*$ is the bijection induced by  any specialization $\iota\col \Gamma_S\to \Gamma$ as in Corollary \ref{cor:flow_bijection}.
\end{corollary}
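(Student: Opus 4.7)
The plan is to assemble the commutative diagram from two results already established in the excerpt, so no new geometric computation is needed. First I would identify the vertical bijection: by Corollary \ref{cor:flow_bijection}, the map $\iota_*\col \flowp(\Gamma_S, \overline{D}_S)\to \flowp(\Gamma, \overline{D})$ induced by any specialization $\iota\col \Gamma_S\to \Gamma$ with $\iota^*(e)$ over $e$ for each $e\in E(\Gamma)$ is a bijection whose inverse sends a flow $\overline{\vr}\in \flowp(\Gamma,\overline{D})$ to the induced flow $\overline{\vr}_S$ of Construction \ref{cons:GammaS}. This pins down which map we are comparing with $\Delta_{\Gamma}$ and $\Delta_{\Gamma_S}$.

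Next I would verify commutativity on elements. Pick $\overline{\vr}\in \flowp(\Gamma,\overline{D})$ and any integral lift $\vr\in C_1(\Gamma,A)$ of $\overline{\vr}$. By construction, the induced flow $\vr_S\in C_1(\Gamma_S,A)$ is a lift of $\overline{\vr}_S$, so by Proposition \ref{prop:Delta_lifting} both $\Delta_{\Gamma}(\overline{\vr})$ and $\Delta_{\Gamma_S}(\overline{\vr}_S)$ are independent of the chosen lifts and can be computed from $\vr$ and $\vr_S$ respectively. Proposition \ref{prop:Delta_refinement} gives $[\D_\vr]=[\D_{\vr_S}]$ in $\Roots^{r,\trop}(X,\D)$, which is exactly the commutativity of the triangle. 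By induction on $|S|$, one reduces to the case of a single inserted point, already handled by that proposition.

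The only real obstacle is a compatibility check, and it is minor: one must confirm that the choice of specialization $\iota$ in Corollary \ref{cor:flow_bijection} and the orientation conventions used in defining $\Delta_{\Gamma_S}$ are consistent with the orientation $\ora{\Gamma_S}$ induced from $\ora{\Gamma}$ in Construction \ref{cons:GammaS}. This is automatic because Proposition \ref{prop:Delta_orientation} guarantees that $\Delta_{\Gamma_S}$ is independent of the orientation, so any choice works. With this, the diagram commutes, completing the proof.
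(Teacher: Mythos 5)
Your proposal is correct and matches the paper's proof, which likewise derives the commutativity directly from Corollary \ref{cor:flow_bijection} (for the vertical bijection) and Proposition \ref{prop:Delta_refinement} (for $[\D_\vr]=[\D_{\vr_S}]$). The extra checks you mention about lifts and orientations are harmless but already subsumed in the well-definedness of $\Delta$ established earlier.
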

\begin{proof}
   The result follows from Corollary  \ref{cor:flow_bijection} and Proposition \ref{prop:Delta_refinement} .
\end{proof}

The next result is a fundamental step toward the proof that the maps $\Delta$ and $\Phi$ are inverse to each other.

\begin{prop}
\label{prop:Delta_injective}
 The map $\Delta$ is injective.
\end{prop}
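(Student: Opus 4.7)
Let $\overline{\vr}_1, \overline{\vr}_2 \in \flowp(\Gamma, \overline{D})$ with $\Delta(\overline{\vr}_1) = \Delta(\overline{\vr}_2)$, and fix $A$-liftings $\vr_1, \vr_2$. The plan is to repackage any rational function $f = (\Gamma_f, \ell_f, \vr_f)$ with $\divi(f) = \D_{\vr_1} - \D_{\vr_2}$ as a single Kirchhoff $A$-flow $\gamma$ on $\Gamma$ whose associated triple $(\Gamma, \ell, \gamma)$ is itself an $A$-rational function on $X$ with trivial divisor, and then to apply Lemmas \ref{lem:rational-function-torsion} and \ref{lem:torsion_quotient} to conclude that $\gamma(e) \in rA$ for every $e$, which forces $\overline{\vr}_1 = \overline{\vr}_2$.

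First I would refine $\Gamma_f$ (by passing to a common refinement of $\Gamma_f$ and the model $\Gamma_S$ of Construction \ref{cons:GammaS} for $S = \{q_e : e \in E(\ora{\Gamma})\}$) so that its vertex set contains $V(\Gamma) \cup S$. Since $\divi(\vr_f) = \D_{\vr_1} - \D_{\vr_2}$ is supported on $V(\Gamma) \cup S$, at every vertex of this refinement lying in the interior of an edge of $\Gamma$ and distinct from the $q_e$ the flow has local divisor zero and is therefore pass-through. Consequently, for each $e \in E(\ora{\Gamma})$, there exist constants $\alpha_e, \beta_e \in A$ such that $\vr_f$ equals $\alpha_e$ on every sub-edge of the segment from $\sigma(e)$ to $q_e$ and equals $\beta_e$ on every sub-edge of the segment from $q_e$ to $\tau(e)$, oriented compatibly with $e$.

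Next I would match $\divi(\vr_f)$ against $\D_{\vr_1} - \D_{\vr_2}$ at the two kinds of special vertices. At $q_e$ we immediately read off $\alpha_e - \beta_e = \vr_1(e) - \vr_2(e)$. At $v \in V(\Gamma)$, the explicit formula for $\D_{\vr_i}(v)$ in Definition \ref{def:divisor_flow}, after clearing the denominator $r$ and rearranging, becomes
\[
\sum_{\tau(e) = v}\bigl(\alpha_e + (r-1)\beta_e\bigr) = \sum_{\sigma(e) = v}\bigl(\alpha_e + (r-1)\beta_e\bigr),
\]
so the $A$-flow $\gamma$ defined by $\gamma(e) := \alpha_e + (r-1)\beta_e$ for $e \in E(\ora{\Gamma})$ is a Kirchhoff flow on $\Gamma$. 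On the other hand, the cycle condition \eqref{eq:rationalfunction} for $\vr_f$ becomes $\sum_{e \in c} \gamma(e) \otimes \frac{\ell(e)}{r} = 0$ in $A \otimes_\ZZ \RR$ for every cycle $c$ of $\Gamma$; multiplying by $r$ shows that $(\Gamma, \ell, \gamma)$ is itself an $A$-rational function on $X$, with $\divi(\gamma) = 0$.

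By Lemma \ref{lem:rational-function-torsion}, $\gamma$ is then a torsion flow, and by Lemma \ref{lem:torsion_quotient}, which applies because multiplication by $r$ is injective on $A$, every torsion element lies in $rA$; hence $\gamma(e) \in rA$ for every edge. Since $\gamma(e) - r\beta_e = \alpha_e - \beta_e = \vr_1(e) - \vr_2(e)$, this forces $\vr_1(e) \equiv \vr_2(e) \pmod{rA}$ and so $\overline{\vr}_1 = \overline{\vr}_2$. The computational heart of the argument is identifying the right combination $\gamma := \alpha + (r-1)\beta$ (the length-weighted combination of the two constant values, reflecting that the segment lengths are $\ell(e)/r$ and $(r-1)\ell(e)/r$); once this substitution is made, both the Kirchhoff condition at vertices of $\Gamma$ and the cycle condition on $\Gamma_f$ collapse to the corresponding conditions for $\gamma$ on $\Gamma$, and the torsion lemmas do the rest.
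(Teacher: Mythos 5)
Your proof is correct and follows essentially the same route as the paper's: both reduce to a rational function realizing $\D_{\vr_1}-\D_{\vr_2}$ on the model with vertex set $V(\Gamma)\cup\{q_e\}$, extract the two constants $\alpha_e,\beta_e$ on the two sub-segments of each edge, form the same key combination $\gamma(e)=\alpha_e+(r-1)\beta_e$ (the paper's flow $\beta$), verify it is a Kirchhoff flow defining a rational function with trivial divisor, and finish with Lemmas \ref{lem:rational-function-torsion} and \ref{lem:torsion_quotient}. The only cosmetic difference is that you start from an arbitrary refinement $\Gamma_f$ and argue the flow is pass-through at the extra valence-two vertices, whereas the paper chooses the witness rational function directly on $\Gamma_S$.
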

\begin{proof}
Let $\overline{\vr}_1$ and $\overline{\vr}_2$ be flows in  $\flowp(\G,\overline{D})$. Consider liftings $\vr_1$ and $\vr_2$ in $C_1(\G,A)$ of $\overline{\vr}_1$ and $\overline{\vr}_2$, respectively. We need to prove that if $\D_{\vr_1}$ is equivalent to $\D_{\vr_2}$, then $\overline{\vr}_1=\overline{\vr}_2$.\par

   Define $\vr:=\vr_1-\vr_2$, and let $\overline{\vr}$ be the $(A/rA)$-flow on $\G$ induced by $\vr$. We set  $\D_{\vr,0}:=\D_{\vr_1}-\D_{\vr_2}$. Notice that
   \[
    \D_{\vr,0}(v)=\D_{\vr_1}(v)-\D_{\vr_2}(v)=\frac{-\sum_{e\in E(\ora{\Gamma}), \sigma(e)=v}(r-1)\vr(e)-\sum_{e\in E(\ora{\Gamma}),\tau(e)=v}\vr(e)}{r}  
 \]
for every $v\in V(\Gamma)$. Moreover,
  $\D_{\vr, 0}(q_{e})=\vr(e)$ for every $e\in E(\ora{\Gamma})$, and $\D_{\vr,0}(p) = 0$ for every other point $p\in X$. 
   We deduce that  $\D_{\vr_1}$ is equivalent to $\D_{\vr_2}$ if, and only if, $\D_{\vr, 0}$ is principal. Notice that  $\divi(\overline\vr)=\divi(\overline{\vr}_1)-\divi(\overline{\vr}_2)=\ol{0}$ (this is the trivial $(A/rA)$-divisor on $\G$), hence in particular $\overline{\vr}\in \flowp(\G,\ol{0})$.\par
    Let us assume that $\D_{\vr, 0}$ is principal. We need to prove that $\overline{\vr}$ is the trivial flow or, equivalently, $\vr(e)\in rA$ for every $e\in E(\ora{\Gamma})$.
     Consider the set $S = \{q_e\}_{e\in E(\ora{\Gamma})}$ and the model $(\Gamma_S,\ell_S)$ of $X$ induced by $S$. Since $\D_{\vr,0}$ is principal and supported on $V(\Gamma_S)$, there exists a rational function $f=(\Gamma_S,\ell_S,\vr_f)$ on $X$ such that $\divi(f)=\D_{\vr,0}$.\par 
    %$\ora{\Gamma}^{(2)}$ of $\ora{\Gamma}$ obtained by inserting exactly one vertex $v_e$ in the interior of every edge $e\in E(\ora{\Gamma})$. We denote by $e'$ and $e''$ the edges of $\Gamma^{(2)}$ over an edge $e$ of $E(\Gamma)$, satisfying $\sigma(e')=\sigma(e)$, $\tau(e')=v_e=\sigma(e'')$, and $\tau(e'')=\tau(e)$. There is a model $(\Gamma^{(2)},\ell^{(2)})$ of $X$ where $\ell^{(2)}(e')=\frac{\ell(e)}{r}$ and $\ell^{(2)}(e'')=\frac{(r-1)\ell(e)}{r}$ (in particular we have that $v_e$ is identified with the point $q_e$ on $X$).
      By the condition $\divi(f)=\D_{\vr,0}$, we deduce that \begin{equation}\label{eq:fieffe}
          \sum_{\substack{e\in E(\ora{\Gamma})\\\tau(e)=v}}\vr_f(e^{(1)}_S)-\sum_{\substack{e\in E(\ora{\Gamma})\\\sigma(e)=v}}\vr_{f}(e^{(0)}_S)=\frac{-\sum_{e\in E(\ora{\Gamma}), \tau(e)=v}\vr(e)-\sum_{e\in E(\ora{\Gamma}),\sigma(e)=v}(r-1)\vr(e)}{r}
      \end{equation}
      for every $v\in V(\Gamma)$ and
      \begin{equation}
          \vr_f(e^{(0)}_S)-\vr_f(e^{(1)}_S)= \D_{\vr, 0}(q_e) = \vr(e)   \label{eq:phifphi}
          \end{equation}
          for every $e\in E(\ora{\Gamma})$.
    Therefore, multiplying Equation \ref{eq:fieffe} by $r$ and using Equation \ref{eq:phifphi}, we get
    \[
    r\sum_{\substack{e\in E(\ora{\Gamma})\\\tau(e)=v}}\vr_f(e^{(1)}_S)-r\sum_{\substack{e\in E(\ora{\Gamma})\\\sigma(e)=v}}\vr_{f}(e^{(0)}_S)=-\sum_{\substack{e\in E(\ora{\Gamma})\\\tau(e)=v}}(\vr_f(e^{(0)}_S)-\vr_f(e^{(1)}_S))-\sum_{\substack{e\in E(\ora{\Gamma})\\\sigma(e)=v}}(r-1)(\vr_f(e^{(0)}_S)-\vr_{f}(e^{(1)}_S)),
    \]
    and hence
    \begin{equation}
        \label{eq:beta}
    -\sum_{\substack{e\in E(\ora{\Gamma})\\\sigma(e)=v}}(\vr_f(e^{(0)}_S)+(r-1)\vr_{f}(e^{(1)}_S))+\sum_{\substack{e\in E(\ora{\Gamma})\\\tau(e)=v}}(\vr_f(e^{(0)}_S)+(r-1)\vr_f(e^{(1)}_S))=0.
    \end{equation}
    
    Let $\beta$ be the $A$-flow on $\Gamma$ defined by $\beta(e)=\vr_f(e^{(0)}_S)+(r-1)\vr_f(e^{(1)}_S)$ for every edge $e\in E(\ora{\G})$. By Equation \eqref{eq:beta}, we have that $\divi(\beta)=0$.  Let $\overline{\beta}$ be the $(A/rA)$-flow on $\G$ induced by $\beta$. By Equation \eqref{eq:phifphi}, we have
    \[
    \beta(e)=\vr_f(e^{(0)}_S)-\vr_f(e^{(1)}_S)+r\vr_f(e^{(1)}_S)=\vr(e)+r\vr_f(e^{(1)}_S),
    \]
    and hence $\overline{\beta}=\overline{\vr}$ in $C_1(\Gamma,A/rA)$.
    
    Since $f$ is a rational function on $X$ (recall Equation \eqref{eq:rationalfunction-gamma}), we have
    \[
    \sum_{e\in \gamma}\gamma(e)\bigg(\vr_f(e^{(0)}_S)\otimes\frac{\ell(e)}{r}+\vr_f(e^{(1)}_S)\otimes\frac{(r-1)\ell(e)}{r}\bigg)=0,
    \]
    for every cycle $\gamma$ of $\ora{\Gamma}$. In particular,  we get
    \[
    0=\sum_{e\in \gamma}\gamma(e)(\vr_f(e^{(0)}_S)+(r-1)\vr_f(e^{(1)}_S))\otimes \ell(e)=\sum_{e\in \gamma}\gamma(e)(\beta(e)\otimes \ell(e))
    \]
    for every cycle $\gamma$ of $\ora{\Gamma}$.
    Therefore, $f_\beta:=(\Gamma,\ell,\beta)$ is a rational function on $X$ such that $\divi(f_{\beta})=0$. By Lemma \ref{lem:rational-function-torsion}, we have that $\beta$ is a torsion flow, hence, by Lemma \ref{lem:torsion_quotient}, we have that $\overline{\vr}=\overline{\beta}=0$. This finishes the proof.
    \end{proof}
    
    Next we turn our attention to the map $\Phi$ in Equation \eqref{eq:Phi}. First we prove that $\Phi$ is well-defined. Recall the notation in Definition \ref{def:flow_divisor}.

\begin{prop}
\label{prop:Phi_well}
   Let $\D'$ be a divisor on $X$ such that $[\D']\in \roots^{r,\trop}(X,\D)$. Then the following properties hold.
   \begin{enumerate}[(i)]
       \item The flow $\overline{\varphi}_{\D'}$ does not depend on the choice of the rational function $f=(\Gamma_f,\ell_f,\vr_f)$.
       \item If $\D''$ is a divisor on $X$ such that $[\D'']=[\D']$ then $\overline{\vr}_{\D'}=\overline{\vr}_{\D''}$.
       \item We have $\divi(\overline{\vr}_{\D'})\in \flowp(\Gamma, \overline{D})$.
   \end{enumerate}
\end{prop}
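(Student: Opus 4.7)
My plan is to prove the three items in order, with all three relying on the same two underlying facts from the preceding subsection: Lemma \ref{lem:rational-function-torsion}, which says an $A$-rational function with trivial divisor has torsion flow, combined with Lemma \ref{lem:torsion_quotient}, which says that (since multiplication by $r$ on $A$ is injective) every torsion element of $A$ lies in $rA$. Together these imply that whenever two rational functions $f_1,f_2$ on $X$ share the same principal divisor, the flows $\vr_{f_1}$ and $\vr_{f_2}$ agree modulo $rA$ on every edge of a common refinement. A preliminary technical step is that, before using this, one must pass to a common refinement of the two models $\Gamma_{f_1}$ and $\Gamma_{f_2}$; on this common refinement the subtraction $\vr_{f_1}-\vr_{f_2}$ and the divisor subtraction make sense, and I use the observation in Definition \ref{def:flow_divisor} that all edges of a refinement lying over a single $e\in E(\Gamma)$ carry the same flow value modulo $rA$.

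For item (i), suppose $f_1,f_2$ are two rational functions with $\divi(f_i) = \D - r\D'$. On a common refinement $\Gamma'$ of $\Gamma_{f_1}$ and $\Gamma_{f_2}$ (which is still a refinement of $\Gamma$), the function $f_1 - f_2$ (formed from the difference of the two flows on $\Gamma'$) has trivial divisor. By Lemmas \ref{lem:rational-function-torsion} and \ref{lem:torsion_quotient}, $\vr_{f_1}(e')-\vr_{f_2}(e')\in rA$ for every $e'\in \ora{E}(\Gamma')$. In particular, for each $e\in \ora{E}(\Gamma)$ and any edge $e'$ of $\Gamma'$ over $e$, the values $\vr_{f_1}(e')$ and $\vr_{f_2}(e')$ agree modulo $rA$; combined with the observation recalled above, this shows that $\overline{\vr}_{\D'}$ is independent of the chosen rational function.

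For item (ii), if $[\D''] = [\D']$, write $\D'' - \D' = \divi(g)$ for some $A$-rational function $g$ on $X$. If $f$ realizes $\D - r\D' = \divi(f)$, then on a common refinement we have $\D - r\D'' = \divi(f) - r\,\divi(g) = \divi(f - rg)$, so $f - rg$ is a valid choice to compute $\overline{\vr}_{\D''}$. Its flow is $\vr_f - r\vr_g$, which reduces to $\vr_f$ modulo $rA$ on every edge; combining with item (i), this gives $\overline{\vr}_{\D''} = \overline{\vr}_{\D'}$.

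For item (iii), fix a rational function $f=(\Gamma_f,\ell_f,\vr_f)$ with $\divi(f) = \D - r\D'$, where $\Gamma_f$ is a refinement of $\Gamma$. For any $v \in V(\Gamma) \subset V(\Gamma_f)$, the oriented edges of $\Gamma_f$ with target $v$ are exactly the edges of $\Gamma_f$ refining the oriented edges $e\in \ora{E}(\Gamma)$ with $\tau(e) = v$ and incident to $v$. Hence
\[
\divi(\overline{\vr}_{\D'})(v) \;=\; \sum_{\substack{e\in \ora{E}(\Gamma)\\ \tau(e)=v}} \overline{\vr}_{\D'}(e) \;\equiv\; \divi(f)(v) \;=\; \D(v) - r\D'(v) \;\equiv\; \overline{D}(v) \pmod{rA},
\]
which shows $\overline{\vr}_{\D'}\in \flowp(\Gamma,\overline{D})$. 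The main obstacle throughout, and the reason for systematically moving to common refinements, is bookkeeping the fact that $\overline{\vr}_{\D'}$ is defined on $\Gamma$ while the ambient rational function $f$ lives on a possibly finer model; once one has verified that the reduction modulo $rA$ of $\vr_f$ is constant along each edge of $\Gamma$, the three parts follow almost automatically from the two lemmas mentioned at the start.
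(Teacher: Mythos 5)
Your proposal is correct and follows essentially the same route as the paper: items (i) and (ii) via Lemmas \ref{lem:rational-function-torsion} and \ref{lem:torsion_quotient} applied after passing to a common refinement (the paper uses $f+rh$ with $\divi(h)=\D'-\D''$ where you use $f-rg$ with $\divi(g)=\D''-\D'$, which is the same computation), and item (iii) by reducing $\divi(\vr_f)=\D-r\D'$ modulo $rA$ at each vertex of $\Gamma$.
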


\begin{proof}
       Let $h=(\Gamma_h,\ell_h,\vr_h)$ be another rational function on $X$ such that $\divi(h)=\divi(f)$. Upon taking a common refinement of $\G$, we can assume that $\Gamma_h=\Gamma_f$ and $\ell_f=\ell_h$. By Lemma \ref{lem:rational-function-torsion}, we have that $\vr_f-\vr_h$ is a torsion flow and hence, by Lemma \ref{lem:torsion_quotient}, this flow vanishes modulo $rA$. This finishes the proof of (i).
      
      Let $f=(\Gamma_f,\ell_f,\vr_f)$ and $h=(\Gamma_h,\ell_h,\vr_h)$ be rational functions on $X$ such that $\Gamma_f=\Gamma_h$, $\ell_f=\ell_h$, $\divi(f)=\D-r\D'$ and $\divi(h)=\D'-\D''$. It follows that $\divi(f+rh)=\D-r\D''$. Since $\vr_{f}$ and $\vr_{f+rh}=\vr_{f}+r\vr_{h}$ are equal modulo $rA$, this ends the proof of (ii).
           
     Let $f$ be a rational function on $X$ such that  $\D-r\D'=\divi(f)$. Hence $\overline{\D}=\overline{\divi(f)}$, which means that $\overline{D}=\overline{\divi(\vr_f)}$. Since $\overline{\divi(\varphi_f)}=\divi(\overline{\vr}_{\D'})$, we are done.
\end{proof}

\begin{corollary}
\label{cor:Phi_well}
 The map $\Phi$ is well-defined.
\end{corollary}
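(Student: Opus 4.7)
The plan is to assemble Corollary \ref{cor:Phi_well} as an immediate consequence of the three parts of Proposition \ref{prop:Phi_well}, which has just been established. To verify that $\Phi$ is a well-defined function, I need to check three things, and each of them corresponds directly to one item of that proposition.

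First I would observe that, for a fixed representative $\D'$ of a class $[\D'] \in \roots^{r,\trop}(X,\D)$, the construction of $\overline{\vr}_{\D'}$ in Definition \ref{def:flow_divisor} depends a priori on the auxiliary choice of a rational function $f = (\Gamma_f, \ell_f, \vr_f)$ on $X$ with $\divi(f) = \D - r\D'$. Independence from this choice is exactly the content of Proposition \ref{prop:Phi_well} (i). Next, I would note that the input of $\Phi$ is a class $[\D']$, not a divisor, so one must check that $\overline{\vr}_{\D'}$ depends only on the equivalence class; this is precisely Proposition \ref{prop:Phi_well} (ii).

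Finally, the target of $\Phi$ is declared to be $\flowp(\Gamma, \overline{D})$, so I need to verify that the $(A/rA)$-flow $\overline{\vr}_{\D'}$ actually lands in this set, i.e., that $\divi(\overline{\vr}_{\D'}) = \overline{D}$. This is the content of Proposition \ref{prop:Phi_well} (iii). Combining (i), (ii), and (iii), the assignment $[\D'] \mapsto \overline{\vr}_{\D'}$ produces a uniquely determined element of $\flowp(\Gamma, \overline{D})$, which is exactly what is meant by $\Phi$ being well-defined. I do not anticipate any obstacle here since all the work has been done in the preceding proposition; the corollary is essentially a bookkeeping statement that reads off the definition of a well-defined function from the three independence/target statements just proved.
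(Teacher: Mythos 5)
Your proposal is correct and matches the paper's intent exactly: the corollary is stated immediately after Proposition \ref{prop:Phi_well} precisely so that items (i), (ii), and (iii) handle, respectively, independence of the auxiliary rational function, independence of the representative of the class $[\D']$, and membership of $\overline{\vr}_{\D'}$ in the target $\flowp(\Gamma,\overline{D})$. The paper leaves this assembly implicit, and your write-up supplies the same bookkeeping.
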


    \begin{prop}
    \label{prop:Delta_Phi_identity}
    The composition $\Delta\circ \Phi$ is the identity.
    \end{prop}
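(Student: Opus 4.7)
The plan is as follows. Given $[\D']\in \Roots^{r,\trop}(X,\D)$, pick a rational function $f=(\Gamma_f,\ell_f,\vr_f)$ on $X$ with $\divi(f)=\D-r\D'$ used in the definition of $\Phi(\D')=\overline{\vr}_{\D'}$. We may further refine $\Gamma_f$ to assume $V(\Gamma_f)\supset V(\Gamma)\cup \Supp(\D')$. Writing $S=V(\Gamma_f)\setminus V(\Gamma)$, the definition of $\overline{\vr}_{\D'}$ says precisely that the pullback $(\overline{\vr}_{\D'})_S$ to $\Gamma_f$ (in the sense of Construction \ref{cons:GammaS}) equals $\overline{\vr_f}$, the reduction of $\vr_f$ modulo $rA$. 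By Corollary \ref{cor:Delta_refinement} and Proposition \ref{prop:Delta_lifting}, we therefore have
\[
\Delta(\Phi(\D'))=\Delta_\Gamma(\overline{\vr}_{\D'})=\Delta_{\Gamma_f}(\overline{\vr_f})=[\D_{\vr_f,r,D_f}],
\]
where the last divisor is computed from Definition \ref{def:divisor_flow} using $\vr_f$ itself as a lifting. It thus suffices to prove that $\D_{\vr_f}$ is equivalent to $\D'$ on $X$.

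Fix an orientation $\ora{\Gamma_f}$. Using $\divi(\vr_f)(v)=D_f(v)-r\D'(v)$ for every $v\in V(\Gamma_f)$ (both sides supported on $V(\Gamma_f)$), an orientation-aware substitution into Definition \ref{def:divisor_flow} yields
\[
\D_{\vr_f}(v)=\D'(v)-\sum_{\substack{e\in E(\ora{\Gamma_f})\\ \sigma(e)=v}}\vr_f(e),\qquad \D_{\vr_f}(q_e)=\vr_f(e),
\]
and $\D_{\vr_f}$ vanishes at every other point. Consequently,
\[
\D_{\vr_f}-\D'=\sum_{e\in E(\ora{\Gamma_f})}\vr_f(e)\bigl(q_e-\sigma(e)\bigr).
\]

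The remaining step is to exhibit an explicit rational function on $X$ with this divisor. Let $\Gamma'$ be the refinement of $\Gamma_f$ obtained by adding all points $q_e$ as vertices, so that each $e\in E(\ora{\Gamma_f})$ is split into $e_0$ from $\sigma(e)$ to $q_e$, of length $\ell_f(e)/r$, and $e_1$ from $q_e$ to $\tau(e)$, of length $(r-1)\ell_f(e)/r$. Define a flow $\vr'$ on $\Gamma'$ by $\vr'(e_0)=\vr_f(e)$ and $\vr'(e_1)=0$. Since refinement does not create new cycles and along any cycle $\gamma$ of $\Gamma_f$ we have
\[
\sum_{e\in \gamma}\gamma(e)\bigl(\vr'(e_0)\otimes \ell'(e_0)+\vr'(e_1)\otimes \ell'(e_1)\bigr)=\tfrac{1}{r}\sum_{e\in \gamma}\gamma(e)\vr_f(e)\otimes \ell_f(e)=0
\]
by the rationality of $f$, the triple $f':=(\Gamma',\ell',\vr')$ is a rational function on $X$. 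Computing $\divi(f')$ at vertices of $\Gamma_f$ and at the added points $q_e$ recovers exactly the divisor displayed above, so $\D_{\vr_f}-\D'$ is principal. Hence $\Delta(\Phi(\D'))=[\D_{\vr_f}]=[\D']$, as desired.

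The main obstacle is the orientation bookkeeping in the divisor computation: one must carefully split contributions at each vertex into incoming and outgoing oriented edges and use $\divi(\vr_f)=\D-r\D'$ to cancel $D_f$; once this is done, the construction of the witnessing rational function $f'$ is essentially forced by the shape of the residual divisor.
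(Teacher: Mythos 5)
Your proof is correct and follows essentially the same route as the paper: after using Corollary \ref{cor:Delta_refinement} and Proposition \ref{prop:Delta_lifting} to reduce to computing $\D_{\vr_f}$ from the flow $\vr_f$ of a chosen rational function $f$ with $\divi(f)=\D-r\D'$, you exhibit exactly the same witnessing rational function (flow $\vr_f(e)$ on the segment from $\sigma(e)$ to $q_e$ and $0$ beyond) that the paper calls $h=(\Gamma_S,\ell_S,\beta)$. The only cosmetic difference is that the paper normalizes so that $\Supp(\D')\subset V(\Gamma)$ and works on $\Gamma$ itself, while you carry out the identical computation on the refinement $\Gamma_f$.
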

    \begin{proof}
       Let $[\D']$ be a class in $\roots^{r,\trop}(X,D)$. To prove the result, let  $\vr$ be a lifting of the flow $\overline{\vr}_{\D'}=\Phi([\D'])$. We will prove that the divisor $\D_\vr$ is equivalent to $\D'$. Note that $[\D_{\vr}]=\Delta(\overline{\vr}_{\D'})=\Delta\circ\Phi([\D'])$.\par

       By Corollary \ref{cor:Delta_refinement} we can assume that the support of $\D'$ is contained in $V(\Gamma)$. We let $f=(\Gamma,\ell,\vr_f)$ be a rational function on $X$ such that  $\D-r\D'=\divi(f)$. By Definition \ref{def:flow_divisor}, we have that $\vr_f$ is a lifting of $\overline{\vr}_{\D'}$. By Proposition \ref{prop:Delta_lifting}, we can assume that $\vr=\vr_f$.\par 
       
       Consider the set $S= \{q_e\}_{e\in E(\ora{\Gamma})}$ (see Definition \ref{def:divisor_flow}) and define the flow $\beta$ on $\Gamma_S$ such that $\beta(e^{(0)}_S)=\vr(e)$ and $\beta(e^{(1)}_S)=0$ for every $e\in E(\ora{\Gamma})$. We will prove that $h=(\Gamma_S, \ell_S, \beta)$ is a rational function on $X$ such that $\D_{\vr}-\D'=\divi(h)$.

       We begin by proving that $h$ is a rational function. Since $f$ is a rational function, we have 
       \begin{align*}
       0 & =\sum_{e\in E(\ora{\Gamma)}}\gamma(e)\left(\beta(e^{(0)}_S)\otimes \frac{\ell(e)}{r}+\beta(e^{(1)}_S)\otimes\frac{\ell(e)}{r}\right) 
         =\sum_{e\in E(\ora{\Gamma})}\gamma(e)\left(\vr(e)\otimes \frac{\ell(e)}{r}\right)
       \end{align*}
       for every cycle $\gamma$. Hence $h$ is a rational function by Equation \eqref{eq:rationalfunction-gamma}.
       
       Now, let us prove that $\D_{\vr}-\D' = \divi(h)$. Indeed, for every vertex $v\in V(\Gamma)$, we have that
       \begin{align*}
       \D_{\vr}(v)-\D'(v) &= \frac{\D(v)-\sum_{\sigma(e)=v}(r-1)\vr(e)-\sum_{\tau(e)=v}\vr(e)}{r}-\D'(v)\\
                          & =\frac{(\D(v) -r\D'(v))-\sum_{\sigma(e)=v}(r-1)\vr(e)-\sum_{\tau(e)=v}\vr(e)}{r} \\
                          & = \frac{\divi(f)-\sum_{\sigma(e)=v}(r-1)\vr(e)-\sum_{\tau(e)=v}\vr(e)}{r}\\
                          & = \frac{-\sum_{\sigma(e)=v}\vr(e)+\sum_{\tau(e)=v}\vr(e)-\sum_{\sigma(e)=v}(r-1)\vr(e)-\sum_{\tau(e)=v}\vr(e)}{r}\\
                          & = -\sum_{\sigma(e)=v}\vr(e) = \divi(h)(v),
       \end{align*}
       where all the sums are over $e\in E(\ora{\Gamma})$. 
       %On the other hand, we have that $\divi(h)(v) = -\sum_{\sigma(e)=v}\vr(e)$. 
       Moreover, for every $e\in E(\ora{\Gamma})$ we have 
       \[
       \D_{\vr}(q_e)-\D'(q_e)=\vr(e) = \divi(h)(q_e).
       \]
       For every other point $p\in X\setminus (V(\Gamma)\cup S)$, we have $\D_{\vr}(p)=\D'(p)=\divi(h)(p)=0$. We deduce that $\D_{\vr}-\D'=\divi(h)$, and the proof is complete.
        \end{proof}

    We sum up the results obtained so far in the following theorem.
    
    \begin{theorem}
    \label{thm:Delta_Phi}
       The maps $\Delta$ and $\Phi$ in Equations \eqref{eq:Delta} and \eqref{eq:Phi} are well-defined, they are bijections and are inverse to one another.
    \end{theorem}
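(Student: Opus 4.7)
The plan is to assemble the theorem from the propositions and corollaries proved earlier in the section, since all the hard work has already been done piece by piece. Well-definedness of $\Delta$ is exactly the content of Corollary \ref{cor:Delta_well} (which in turn bundles together the independence of the choice of orientation in Proposition \ref{prop:Delta_orientation}, of the lifting in Proposition \ref{prop:Delta_lifting}, and the fact that the resulting divisor is actually an $r$-root in Proposition \ref{prop:Delta_root}). Likewise, well-definedness of $\Phi$ is exactly Corollary \ref{cor:Phi_well}, which repackages the three parts of Proposition \ref{prop:Phi_well} (independence of the chosen rational function $f$, independence of the representative $\D'$ of its class, and the identity $\divi(\overline{\vr}_{\D'}) = \overline{D}$).

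Next I would obtain the bijectivity of $\Delta$. Proposition \ref{prop:Delta_Phi_identity} gives $\Delta \circ \Phi = \mathrm{id}_{\Roots^{r,\trop}(X,\D)}$, so in particular $\Delta$ is surjective. Combined with the injectivity of $\Delta$ established in Proposition \ref{prop:Delta_injective}, this shows that $\Delta$ is a bijection from $\flowp(\G,\overline{D})$ to $\Roots^{r,\trop}(X,\D)$.

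Finally, having a two-sided inverse comes for free: from $\Delta \circ \Phi = \mathrm{id}$ and the fact that $\Delta$ is bijective, composing on the left with $\Delta^{-1}$ yields $\Phi = \Delta^{-1}$, so that also $\Phi \circ \Delta = \mathrm{id}_{\flowp(\G,\overline{D})}$ and $\Phi$ is a bijection inverse to $\Delta$. In particular there is no additional step to check: the four ingredients (the two well-definedness corollaries, Proposition \ref{prop:Delta_injective}, and Proposition \ref{prop:Delta_Phi_identity}) suffice. The only subtle point, and really the only obstacle in the section as a whole, was establishing the injectivity of $\Delta$ via the torsion argument in Proposition \ref{prop:Delta_injective}; at the level of Theorem \ref{thm:Delta_Phi} itself the proof is a short bookkeeping paragraph citing these prior results.
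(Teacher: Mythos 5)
Your proposal is correct and follows exactly the same route as the paper's proof: well-definedness from Corollaries \ref{cor:Delta_well} and \ref{cor:Phi_well}, injectivity of $\Delta$ from Proposition \ref{prop:Delta_injective}, and $\Delta\circ\Phi=\mathrm{id}$ from Proposition \ref{prop:Delta_Phi_identity}, from which bijectivity and the two-sided inverse follow by the standard formal argument. Nothing is missing.
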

    \begin{proof}
 By Corollaries \ref{cor:Delta_well} and \ref{cor:Phi_well}, the functions $\Delta$ and $\Phi$ are well-defined. The function $\Delta $ is injective by Proposition \ref{prop:Delta_injective} and $\Delta\circ \Phi$ is the identity by Proposition \ref{prop:Delta_Phi_identity}. Hence $\Delta$ and $\Phi$ are bijections and they are inverse to one another. 
    \end{proof}
    
\section{Moduli of roots of $A$-divisors}

In this section we use the universal poset of flows to construct a moduli space parametrizing roots of an $A$-divisor over a $n$-pointed tropical curve of genus $g$.

Next, we consider the universal setting. We construct a moduli space parametrizing roots of $A$-divisors on tropical curves. Throughout the section, we let $A$ be an abelian group and $r$ a non-zero integer such that the multiplication map $A\xrightarrow{\cdot r}A$ is injective. We let $\R=(a_1,\dots,a_n,b)$ be an $A$-ramification sequence and $g$ be a non-negative integer such that $a_1+\ldots+a_n+(2g-2)b\in rA$. We denote by $\ol{\R}$ the sequence whose entries are the entries of $\mathcal{R}$ taken modulo $rA$.  
 For a genus-$g$ stable tropical curve $X$ with $n$ legs having canonical model $(\Gamma,\ell)$, we define the divisor $\D_{\R,X}$ on $X$ to be the divisor induced by the divisor $D_{\R,\Gamma}$ on $\G$ (recall Definition \ref{def:DRGamma}). 
 
The first step is the definition of a generalized cone complex. 
For each pair $(\G,\ol{\vr})$ in the category $\flow_g(A/rA,\overline{\R})$, we define the cones
\[
\kappa_{(\G,\ol{\vr})}^{\circ} :=\mathbb{R}_{>0}^{E(\G)}\text{ and }\kappa_{(\G,\ol{\vr})} :=\mathbb{R}_{\geq0}^{E(\G)}.
\]
The vector space $\mathbb R^{E(\G)}$ has a canonical base indexed by the set $E(\G)$. 
In what follows we will denote by $x_e$ the coordinate corresponding to an edge $e\in E(\G)$.

Notice that each specialization $(\G,\ol{\vr})\to (\G',\ol{\vr}')$ induces a face morphism $\kappa_{(\G',\ol{\vr}')}\to\kappa_{(\G,\ol{\vr})}$.

\begin{definition}\label{def:moduli-root}
    We define the generalized cone complex $\Roots^{r, \trop}_g(A,\R)$ to be the colimit
\[
\Roots^{r,\trop}_g(A,\R):=\lim_{\longrightarrow} \kappa_{(\G,\ol{\vr})}=\bigsqcup\; \kappa_{(\G,\ol{\vr})}^{\circ}/\Aut(\Gamma,\ol{\vr}),
\]
where the colimit is taken over the category $\flow_g(A/rA,\overline{\mathcal R})$ and the union is taken over the poset $\flowp_g(A/rA,\overline{\mathcal R})$.
We define the extended generalized cone complex $\overline{\Roots}^{r, \trop}_g(A,\R)$ as the compactification of $\Roots^{r, \trop}_g(A,\R)$, that is
\[
\overline{\Roots}^{r,\trop}_g(A,\R):=\lim_{\longrightarrow} \overline{\kappa}_{(\G,\ol{\vr})}
\]
where $
\overline{\kappa}_{(\G,\ol{\vr})} :=\overline{\mathbb{R}}_{\geq 0}^{E(\G)}$ and $\overline{\mathbb{R}}:=\mathbb{R}\cup\{\infty\}$, and the colimit is taken over the category $\flow_g(A/rA,\overline{\mathcal R})$. When $A=\mathbb{Z}$ we will simply write $\Roots^{r,\trop}_g(\R)$ and $\overline{\Roots}^{r,\trop}_g(\R)$ instead of $\Roots^{r,\trop}_g(\mathbb{Z},\R)$ and $\overline{\Roots}^{r,\trop}_g(\mathbb{Z},\R)$.
  \end{definition}

There is a natural map 
\[
\pi_{g,\R}^{r, \trop}\col \Roots^{r,\trop}_g(A,\R)\to M^{\trop}_{g,n}
\]
induced by the map of cones
taking a point in the cone $\kappa_{(\G,\ol{\varphi})}$ with coordinates $(x_e)_{e\in E(\G)}$  to the point $(y_e)_{e\in E(\G)}$ of the cone $\sigma_\G=\mathbb R^{E(\G)}_{\ge0}$, where\footnote{
As we shall see later, the multiplying factor in the equation above is chosen to make the tropicalization map compatible with the geometric map. However, depending on the choice of compactification of the moduli of roots, this factor can be rescaled.} 
\begin{equation}\label{eq:ye}
y_e=\left|\frac{r}{\gcd(r,\ol{\vr}(e))}\right|\cdot x_e.
%& \text{ if }\vr(e)\neq0
    \end{equation}
(Recall that we have natural maps $\sigma_\G\to M_{g,n}^{\trop}$.)

\begin{theorem}\label{thm:main-moduliroot}
   The generalized cone complex  $\Roots^{r,\trop}_g(A,\R)$ satisfies the following properties.
   \begin{enumerate}[(i)]
       \item The points of $\Roots^{r,\trop}_g(A,\R)$ parametrize pairs $(X,[\D])$, where $X$ is a genus-$g$ stable tropical curve with $n$ legs and $\D$ is an $r$-root of $\D_{\R,X}$.
        %\item The generalized cone complex $\Roots^{r,\trop}_g(A,\R)$ is connected through codimension one.
       \item The map $\pi_{g,\R}^{r, \trop}$ is a map of generalized cone complexes  whose fiber over the class of a tropical curve $X$ is isomorphic to $\Roots^{r,\trop}(X,\D_{\R,X})/\Aut(X)$.
   \end{enumerate}
 
\end{theorem}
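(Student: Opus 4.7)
The plan is to use the bijection of Theorem \ref{thm:Delta_Phi} on each cone separately, and then check compatibility with the colimit structure and the automorphism quotients.

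First I would prove (i). A point of $\Roots^{r,\trop}_g(A,\R)$ is represented by a triple $((\Gamma,\overline{\vr}),(x_e)_{e\in E(\Gamma)})$ with $\overline{\vr}\in \flowp(\Gamma,\overline{D_{\R,\Gamma}})$ and $x_e>0$, taken modulo $\Aut(\Gamma,\overline{\vr})$. To such a triple I associate the stable tropical curve $X$ whose canonical model is $(\Gamma,y)$ with $y_e := |r/\gcd(r,\overline{\vr}(e))|\cdot x_e$ (this is canonical because $\Gamma$ is stable as an element of $\flowp_g(A/rA,\overline{\R})$ and all $y_e>0$), together with the class $\Delta_\Gamma(\overline{\vr})=[\D_\vr]$, where $\vr$ is any lifting of $\overline{\vr}$. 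By Proposition \ref{prop:Delta_lifting}, Proposition \ref{prop:Delta_orientation} and Proposition \ref{prop:Delta_root}, this class is a well-defined $r$-root of $\D_{\R,X}$, and clearly an automorphism of $(\Gamma,\overline{\vr})$ preserves both $X$ and $[\D_\vr]$, so the assignment descends to the quotient. For the inverse, a pair $(X,[\D])$ determines the canonical model $(\Gamma,\ell)$ of $X$; then $\Phi_\Gamma([\D])$ yields a unique flow $\overline{\vr}\in \flowp(\Gamma,\overline{D_{\R,\Gamma}})$ by Theorem \ref{thm:Delta_Phi}, and setting $x_e := \ell(e)/|r/\gcd(r,\overline{\vr}(e))|$ produces a preimage, well-defined up to the joint action of $\Aut(\Gamma,\ell,\overline{\vr})$.

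For (ii), the main point is to check that the rescaling $x_e \mapsto y_e = |r/\gcd(r,\overline{\vr}(e))|\cdot x_e$ is compatible with face morphisms. A specialization $(\Gamma,\overline{\vr})\to(\Gamma',\overline{\vr}')$ contracts a set $\mathcal{E}\subset E(\Gamma)$, and the induced face morphism $\kappa_{(\Gamma',\overline{\vr}')}\hookrightarrow \kappa_{(\Gamma,\overline{\vr})}$ sends the coordinates indexed by $\mathcal{E}$ to zero. Since for $e\notin \mathcal{E}$ one has $\overline{\vr}'(e)=\overline{\vr}(e)$ by definition of pushforward of flows, the rescaling coefficients match, and similarly the face morphisms of $M_{g,n}^{\trop}$ are compatible with these inclusions. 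This shows $\pi_{g,\R}^{r,\trop}$ is a morphism of generalized cone complexes.

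For the fiber, fix a tropical curve $X$ with canonical model $(\Gamma,\ell)$. Because all coordinates in $\kappa^\circ$ are strictly positive and the underlying graph in any element of $\flowp_g(A/rA,\overline{\R})$ is stable, any cone $\kappa^\circ_{(\Gamma'',\overline{\vr})}$ contributing to the fiber over the class of $X$ must have $\Gamma''\cong \Gamma$. Within such a cone, inverting the scaling determines the preimage point uniquely, so the fiber is in bijection with the set of pairs $(\overline{\vr},\ell)$ modulo $\Aut(\Gamma,\ell,\overline{\vr})$, i.e.\ with $\flowp(\Gamma,\overline{D_{\R,\Gamma}})/\Aut(\Gamma,\ell)=\flowp(\Gamma,\overline{D_{\R,\Gamma}})/\Aut(X)$. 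By Theorem \ref{thm:Delta_Phi}, this identifies with $\Roots^{r,\trop}(X,\D_{\R,X})/\Aut(X)$. The main obstacle will be the bookkeeping of the various automorphism groups ($\Aut(\Gamma)$, $\Aut(\Gamma,\overline{\vr})$, $\Aut(X)$) and making sure that the gluing across cones in the colimit, forced by specializations, is consistent with the bijections from Theorem \ref{thm:Delta_Phi}; this in turn should reduce to the naturality expressed in Corollary \ref{cor:Delta_refinement} and the push-forward compatibilities of Lemma \ref{lem:push-flow} and Remark \ref{rem:push-flow1}.
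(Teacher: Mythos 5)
Your proposal is correct and follows essentially the same route as the paper: apply the bijection of Theorem \ref{thm:Delta_Phi} cone by cone, use the rescaling $y_e=|r/\gcd(r,\overline{\vr}(e))|\cdot x_e$ to pass between cone coordinates and edge lengths, and identify the fiber over $[X]$ with $\flowp(\Gamma,\overline{D_{\R,\Gamma}})/\Aut(X)\cong\Roots^{r,\trop}(X,\D_{\R,X})/\Aut(X)$. If anything, you are slightly more explicit than the paper about the compatibility of the rescaling with face morphisms and about the inverse formula $x_e=\ell(e)/|r/\gcd(r,\overline{\vr}(e))|$.
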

\begin{proof}
  Given a point in $\kappa_{\Gamma,\ol{\vr}}$ with coordinates $(x_e)_{e\in E(\G)}$, we construct the pair $(X,[\Delta(\ol{\vr})])$, where $X=(\Gamma,\ell)$ is the tropical curve with associated function $\ell\col E(\G)\to \mathbb R_{>0}$ taking $e\in E(\G)$ to
\begin{equation}
\ell(e)=\left|\frac{r}{\gcd(r,\overline{\vr}(e))}\right|\cdot x_e,
\end{equation}
 and where $[\Delta(\ol{\vr})]$ is class of the divisor $\Delta(\ol{\vr}))\in \roots^{r,\trop}(X,\D_{\R,X})$ (recall Equation \eqref{eq:Delta}). A standard argument (see \cite[Proposition 5.12]{APPLMS}) shows that points in $\kappa_{(\Gamma,\ol{\vr})}^\circ$ that are identified via $\Aut(\Gamma,\ol{\vr})$ give rise to isomorphic pairs.
 %$(X,[\D'])$ and $(X_0,[\D'_0])$.  

 %We have that to each point in $\kappa_{(\Gamma,\vr)}^\circ/\Aut(\Gamma,\vr)$ there is an associated pair $(X,[\D'])$. 
  
  Vice-versa, let $X$ be a stable $n$-pointed genus-$g$ tropical curve with model $(\Gamma_X,\ell_X)$, and $\D$ be a $r$-root of $\D_{X,\R}$. By Theorem \ref{thm:Delta_Phi}, there exists a unique flow $\ol{\vr} \in \F(\Gamma_X, \overline{D}_{\R,\Gamma_X})$ such that $[\D] = \Delta(\overline{\vr})$. Hence the pair $(X,[\D])=(X,\Delta(\ol{\vr}))$ corresponds to the point $(x_e)=(\ell_X(e))_{e\in E(\G_X)}$ in $\kappa_{(\Gamma,\overline{\vr})}^\circ/\Aut(\Gamma,\ol{\vr})$. Moreover, if $(X',[\D'])$ is another pair such that $X$ and $X'$ are isomorphic tropical curve and this isomorphism takes $[\D]$ to $[\D']$, then the above construction takes $(X,[\D])$ and $(X',[\D'])$ to the same point in $\kappa_{(\Gamma,\overline{\vr})}^\circ/\Aut(\Gamma,\ol{\vr})$. This finishes the proof of (i).\par
   
   %The fact that $\Roots^{r,\trop}_g(A,\R)$ is connected through codimension one follows from Theorem \ref{thm:flowcon1} applied to $\flowp_g(A/rA, \overline{\R})$. This proves (ii).\par
   
   The map $\pi_{g,\R}^{r,\trop}$ is already defined as a map of generalized cone complexes. Let $X$ be a stable $n$-pointed genus-$g$ tropical curve and let $(\Gamma_X,\ell_X)$ be its canonical model. Consider the map \[
   h\col \Roots^{r,\trop}(X,\D_{\R, X})\to \Roots_g^{r,\trop}(A,\R)
   \]
   given by $h([\D])=[(X,[\D])]$. We have that $(\pi_{g,\R}^{r, \trop})^{-1}([X])=\text{Im}(h)$. Moreover, the group $\Aut(X)$ acts on  $\Roots^{r,\trop}(X,\D_{\R,X})$. Two $r$-roots  $[\D]$ and $[\D']$ in $\Roots^{r,\trop}(X,\D_{\R,X})$ have the same image via $h$ if and only if there is an automorphism $\alpha$ of $X$ such that $\alpha_*([\D])=[\D']$. This finishes the proof of (ii).
   \end{proof}

\begin{prop}\label{prop:homeoRprime}
Consider integers $r'$ and $d$ such that $r=dr'$ for some integer $d$. 
Let $\R'=(a_1,\ldots, a_n, \ell)$ be a ramification sequence such that $a_1+\ldots+a_n+(2g-2)\ell\in r'A$ and let $\R=(da_1,\ldots, da_n, d\ell)$. Then
there is a natural map of generalized cone complexes
\[
\Roots^{r',\trop}_g(A,\R')\to \Roots^{r,\trop}_g(A,\R)
\]
over $M_{g,n}^{\trop}$,
taking the class of $(X,[\D])$ to itself. This map is a homeomorphism onto its image.
\end{prop}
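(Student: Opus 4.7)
The plan is to build the map from a group homomorphism at the level of coefficients, lift it to a map of cone complexes via the functoriality established in Proposition~\ref{prop:rankmap}, and verify that it is the identity on pairs $(X,[\D])$. The homeomorphism statement will follow from injectivity considerations, which is where the main subtlety lies.

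First I would observe that multiplication by $d$ on $A$ descends to a well-defined homomorphism
\[
\mu_d \col A/r'A \lra A/rA, \qquad x+r'A \longmapsto dx+rA,
\]
since $d\cdot r'A = rA$; this map is injective because multiplication by $d$ is injective on $A$ (as $d$ divides $r$). By construction $\mu_d(\ol{\R'})=\ol{\R}$, so by Proposition~\ref{prop:rankmap} we obtain an injective map of ranked posets $(\mu_d)_*\col \flowp_g(A/r'A,\ol{\R'})\to \flowp_g(A/rA,\ol{\R})$ sending $(\G,\ol{\vr}')$ to $(\G, d\ol{\vr}')$. I then define the proposed map of cone complexes by sending the cone $\kappa_{(\G,\ol{\vr}')}$ to the cone $\kappa_{(\G,d\ol{\vr}')}$ via the identity on coordinates $(x_e)\mapsto(x_e)$. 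Commutativity with specializations is immediate since $(\mu_d)_*$ preserves them.

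To see that this map lives over $M_{g,n}^{\trop}$, I would use Equation~\eqref{eq:d_gcd} applied with $b=r'$ and a lift $a\in A$ of $\ol{\vr}'(e)$ to compute
\[
\gcd(r, d\ol{\vr}'(e)+rA)=\gcd(dr', da+dr'A)=d\cdot\gcd(r', a+r'A)=d\cdot\gcd(r',\ol{\vr}'(e)),
\]
so the two scaling factors in Equation~\eqref{eq:ye} satisfy $r/\gcd(r,d\ol{\vr}'(e))=r'/\gcd(r',\ol{\vr}'(e))$, and the identity on coordinates induces the same tropical curve on both sides. To check that $(X,[\D])$ is sent to itself, I would invoke Theorem~\ref{thm:Delta_Phi}: if $f$ is a rational function with $\divi(f)=\D_{\R',X}-r'\D$, then $\divi(df)=\D_{\R,X}-r\D$ exhibits $\D$ as an $r$-root of $\D_{\R,X}$, and the associated flows in $A/r'A$ and $A/rA$ are related precisely by $\mu_d$, matching the cone-level construction above.

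For the homeomorphism onto its image, the map is continuous and piecewise-linear (identity on each cone), so it suffices to show injectivity and that distinct source cones go to distinct target cones. Injectivity on the underlying set reduces to the following key lemma: for an automorphism $\alpha$ of $\G$ and flows $\ol{\vr}'_1,\ol{\vr}'_2$ in $\flowp_g(A/r'A,\ol{\R'})$, the equality $\alpha_*(d\ol{\vr}'_1)=d\ol{\vr}'_2$ in $A/rA$ forces $\alpha_*(\ol{\vr}'_1)=\ol{\vr}'_2$ in $A/r'A$. Lifting to $\vr'_i\in A$, the hypothesis gives $d(\alpha_*\vr'_1-\vr'_2)(e)\in rA=dr'A$ for every $e\in\ora{E}(\G)$, and injectivity of multiplication by $d$ yields $(\alpha_*\vr'_1-\vr'_2)(e)\in r'A$, which is the claim. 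Specializing to $\ol{\vr}'_1=\ol{\vr}'_2$ also gives $\Aut(\G,\ol{\vr}')=\Aut(\G,d\ol{\vr}')$, so the quotient cones match. The main (minor) obstacle in the whole argument is exactly this last verification that no new identifications appear in the target; it is precisely the injectivity of $\mu_d$, which in turn is a direct consequence of the standing hypothesis that multiplication by $r$ is injective on $A$.
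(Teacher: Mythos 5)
Your proposal is correct and follows essentially the same route as the paper: multiplication by $d$ on flows mod $r'A$, the identity on cone coordinates, and Equation \eqref{eq:d_gcd} to match the scaling factors over $M_{g,n}^{\trop}$. The only (harmless) variations are that you verify $(X,[\D])\mapsto(X,[\D])$ via the map $\Phi$ of Theorem \ref{thm:Delta_Phi} (passing from $f$ to $df$) where the paper instead shows directly that $\D_{d\vr,r}-\D_{\vr,r'}$ is principal using Lemma \ref{lem:segment}, and that you spell out the injectivity/stabilizer check for the homeomorphism claim, which the paper leaves implicit.
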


\begin{proof}
First notice that the multiplication map by $d$ is injective in the abelian group $A$, since so is the multiplication by $r$.
 We have a natural map
 \begin{align*}
     A/r'A&\to A/rA\\
       a+r'A &\mapsto da+rA.
 \end{align*}
  Then there exists a natural map of multiplication by $d$ given by
 \begin{align*}
  \flowp_{g}(A/r'A,\R'+r'A) &\to \flowp_g(A/rA, \R + rA)\\
                          \vr+r'A  & \mapsto d\vr+rA.
 \end{align*}
 
 Recall Definition \ref{def:divisor_flow} and Equation \eqref{eq:DphiDe}. 
 For a tropical curve $X$ with model $(\Gamma,\ell)$, the divisors  $\D_{d\vr}:=\D_{d\vr, r, D_{\Gamma,\R}}$ and $\D_{\vr}:=\D_{\vr,r',D_{\Gamma,\R'}}$ are equivalent. Indeed, we have
 \[
 \D_{d\vr}-\D_{\vr}=\sum_{e\in E(\ora{\G})}\vr(e)\left(-(d-1)\sigma(e)+dq_{e,r}-q_{e,r'}\right),
 \]
 where $q_{e,r}$ and $q_{e,r'}$ denote the points on $e\in E(\ora{\G})$ at distance $\frac{\ell(e)}{r}$ and $\frac{\ell(e)}{r'}$ from $\sigma(e)$, respectively. Thus Lemma \ref{lem:segment} (i) implies that the divisor $\D_{d\vr}-\D_{\vr}$ on $X$ is principal. This in turn induces a map of cones $\kappa_{\Gamma, \vr+r'A}\to \kappa_{\Gamma,d\vr+rA}$ taking a point in the cone $\kappa_{\Gamma, \vr+r'A}$ with coordinates $(x_e)_{e\in E(\G)}$ to the point of the cone $\kappa_{\Gamma,d\vr+rA}$ with the same coordinates $(x_e)_{e\in E(\G)}$.

The maps of cones $\kappa_{\Gamma, \vr+r'A}\to \kappa_{\Gamma,d\vr+rA}$ takes the point parametrizing a pair $(X,[\D_{\vr}])$ to the point parametrizing a pair $(X,[\D_{d\vr}])$. This follows by Equation \ref{eq:ye}, since, using Equation \eqref{eq:d_gcd}, we have that 
\[
\left|\frac{r'}{\gcd(r',\vr(e)+r'A)}\right|x_e=\left|\frac{r}{\gcd(r,d\vr(e)+rA}\right|x_e.
\]
Notice that this also proves that the map $\kappa_{\Gamma, \vr+r'A}\to \kappa_{\Gamma,d\vr+rA}$ is compatible with the forgetful maps onto $M_{g,n}^{\trop}$. 
Thus we get a map of generalized cone complexes which, by the description of the induced map between the cones, is a homeomorphism onto its image. This finishes the proof. 
\end{proof}

\begin{prop}\label{prop:multiplid}
Consider integers $r'$ and $d$ such that $r=dr'$, for some integer $d$. Let $\R=(a_1,\ldots, a_n,\ell)$ such that $a_1+\ldots+a_n+(2g-2)\ell\in rA\subset r'A$. Then there is a surjective natural map of generalized cone complexes
\[
 \Roots^{r,\trop}_g(A,\R)\to \Roots^{r',\trop}_g(A,\R)
\]
over $M_{g,n}^{\trop}$, 
taking the class of a pair $(X,[\D])$ to the class of the pair $(X, [d\D])$.
\end{prop}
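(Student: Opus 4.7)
The plan is to build the map cone-by-cone, lifting a map of indexing posets. Since $rA \subset r'A$, the natural surjection $A/rA \twoheadrightarrow A/r'A$ combined with Proposition \ref{prop:rankmap} gives a surjective map of ranked posets
\[
\flowp_g(A/rA, \ol{\R}) \to \flowp_g(A/r'A, \ol{\R}), \qquad (\G, \ol{\vr}) \mapsto (\G, \ol{\vr}'),
\]
where $\ol{\vr}'(e) := \vr(e) \bmod r'A$ for any lift $\vr \in C_1(\G, A)$ of $\ol{\vr}$. Since both cones $\kappa_{(\G, \ol{\vr})}$ and $\kappa_{(\G, \ol{\vr}')}$ have coordinate set $E(\G)$, the cone-level map will be $(x_e) \mapsto (\lambda_e x_e)$ with
\[
\lambda_e := \frac{d \, \gcd(r', \ol{\vr}'(e))}{\gcd(r, \ol{\vr}(e))},
\]
forced by the requirement that the diagram over $M_{g,n}^{\trop}$ commute: the two forgetful formulas $\frac{r}{\gcd(r, \ol{\vr}(e))} x_e$ and $\frac{r'}{\gcd(r', \ol{\vr}'(e))} \lambda_e x_e$ of Equation \eqref{eq:ye} then agree.

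The main obstacle is showing that $\lambda_e$ is a positive integer. My plan is to use identity \eqref{eq:d_gcd} to rewrite $d\gcd(r', \ol{\vr}'(e)) = \gcd(r, d\vr(e) + rA)$, and then invoke the following Bezout-type lemma, provable directly from the injectivity of multiplication by $r$ on $A$: if $j_1, j_2 \mid r$ and $a \in j_1 A \cap j_2 A$, then $a \in \lcm(j_1, j_2) A$ (write $\gcd(j_1, j_2) = \alpha j_1 + \beta j_2$, combine $a = j_1 b_1 = j_2 b_2$ to get $\gcd(j_1, j_2) a = \gcd(j_1, j_2) \lcm(j_1, j_2) c$, then cancel using that multiplication by $\gcd(j_1, j_2)$ is injective since it divides $r$). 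As a consequence, inside $\{j : j \mid r, \vr(e) \in jA\}$ the maximum $\gcd(r, \ol{\vr}(e))$ is divisible by every element; applying this to the divisor $\gcd(r, d\vr(e)+rA)$ of $r$ via a valuation-by-valuation analysis yields $\gcd(r, \ol{\vr}(e)) \mid d\gcd(r', \ol{\vr}'(e))$, as needed.

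With $\lambda_e \in \mathbb{Z}_{>0}$, functoriality of the poset assignment gives compatibility with face morphisms (edge contractions leave the remaining $\ol{\vr}$-values untouched) and automorphisms of pairs, so the family of cone maps assembles into a morphism of generalized cone complexes over $M_{g,n}^{\trop}$. To verify that it sends $(X, [\D])$ to $(X, [d\D])$, I trace through Theorem \ref{thm:Delta_Phi}: the flow $\ol{\vr}$ corresponding to $[\D]$ arises via Definition \ref{def:flow_divisor} from a rational function $f$ with $\divi(f) = \D_{\R, X} - r\D$; the same $f$ also satisfies $\divi(f) = \D_{\R, X} - r'(d\D)$, so its reduction modulo $r'A$ equals $\ol{\vr}'$, which by the bijection $\Phi$ at level $r'$ attaches precisely to $[d\D]$.

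For surjectivity, let $(X, [\D'])$ be a point in the target, so $r'\D' \sim \D_{\R, X}$. Since $\deg(\D_{\R, X}) \in rA = dr'A$ and multiplication by $r'$ is injective on $A$, the degree of $\D'$ lies in $dA$; Theorem \ref{thm:Delta_Phi} applied with $r$ replaced by $d$ then furnishes a $d$-root $\D$ of $\D'$ on $X$, which is automatically an $r$-root of $\D_{\R, X}$ mapping to $[\D']$. Since $\lambda_e > 0$ makes each cone map a bijection onto its target cone, this pointwise surjectivity upgrades to surjectivity as a map of generalized cone complexes.
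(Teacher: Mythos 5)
Your proof is correct and follows essentially the same route as the paper: the surjective poset map from Proposition \ref{prop:rankmap}, a cone-by-cone map rescaling $x_e$ by $\left|\frac{r\gcd(r',\vr(e)+r'A)}{r'\gcd(r,\vr(e)+rA)}\right|$ (your $\lambda_e$), and surjectivity deduced from surjectivity at the poset level. The only substantive differences are that you actually prove this factor is a positive integer via your lcm-closure lemma --- a fact the paper asserts without justification --- and that you verify $(X,[\D])\mapsto(X,[d\D])$ by tracing through $\Phi$ and rational functions rather than the paper's direct check that $d\D_{\vr,r}-\D_{\vr,r'}$ is principal via Lemma \ref{lem:segment}; both are sound.
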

\begin{proof}
   By Proposition \ref{prop:rankmap}, we have a natural surjective map
   \begin{align}
       \flowp_g(A/rA, \R+rA)&\to \flowp_g(A/r'A, \R+r'A)\label{eq:natual-surjective}\\
         \vr+rA&\mapsto \vr+r'A \nonumber.
   \end{align}
   For a tropical curve $X$ with model $(\Gamma,\ell)$, the divisors $d\D_{\vr,r}$ and $\D_{\vr,r'}$ are equivalent. Indeed, by Definition \ref{def:divisor_flow} and Equation \ref{eq:DphiDe}, we have
   \[
   d\D_{\vr,r}-\D_{\vr,r'} = \sum_{e\in E(\ora{\G})}\vr(e)\left(-(d-1)\sigma(e)+dq_{e,r}-q_{e,r'}\right),   \]
    where $q_{e,r}$ and $q_{e,r'}$ denote the points on $e\in E(\ora{\G})$ at distance $\frac{\ell(e)}{r}$ and $\frac{\ell(e)}{r'}$ from $\sigma(e)$, respectively.
   Thus Lemma \ref{lem:segment} implies that $d\D_{\vr,r}$ is equivalent to $\D_{\vr,r'}$.
    This induces a function $\kappa_{\Gamma,\vr+rA}\to\kappa_{\Gamma, \vr+r'A}$ that takes a point in the cone $\kappa_{\Gamma,\vr+rA}$ with coordinates $(x_e)_{e\in E(\Gamma)}$ to the point of the cone $\kappa_{\Gamma, \vr+r'A}$ with coordinates $(y_e)_{e\in E(\Gamma)}$, where
    \begin{equation}\label{eq:yexe}
    y_e=x_e\left | \frac{r\gcd(r',\vr(e)+r'A)}{r'\gcd(r,\vr(e)+rA)} \right |.
    \end{equation}
    Since the number multiplying $x_e$ in Equation \ref{eq:yexe} is a positive integer, this map is a map of cones that induces a map  of generalized cone complexes $\Roots^{r,\trop}_g(A,\R)\to \Roots^{r',\trop}_g(A,\R)$ over $M_{g,n}^{\trop}$. 
    The surjectivity of this map follows from the surjectivity of the map in Equation \ref{eq:natual-surjective}.
\end{proof}

\begin{prop}\label{prop:ring-change}
Let $f\col A\to B$ be a homomorphism of abelian groups and assume that the multiplication by $r$ is injective in $A$ and $B$. Let $\R=(a_1,a_2,\dots,a_n,\ell)$ be an $A$-ramification sequence. Then there is a natural map of generalized cone complexes 
\[
f_*\col \Roots^{r,\trop}_g(A,\R)\to \Roots^{r,\trop}_g(B,f(\R))
\]
over $M_{g,n}^{\trop}$, 
taking the class of a pair $(X,[\D])$ to class of the pair $(X,[f\circ\D)])$. If $f$ is injective  (respectively, surjective), then $f_*$ is injective (respectively, surjective). Here we denote by $f(\R)$ the $B$-ramification sequence $f(\R):=(f(a_1),f(a_2),\ldots, f(a_n),f(\ell))$.
\end{prop}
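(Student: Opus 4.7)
The plan is to construct $f_*$ cone by cone via Proposition \ref{prop:rankmap}, then assemble using the colimit definition of $\Roots^{r,\trop}_g(A,\R)$. First, since $f(rA)\subseteq rB$, the homomorphism $f$ descends to $\overline{f}\col A/rA\to B/rB$ satisfying $\overline{f}(\overline{\R})=\overline{f(\R)}$, and Proposition \ref{prop:rankmap} applied to $\overline{f}$ yields a map of ranked posets over $\mathcal G_{g,n}$:
\[
\overline{f}_*\col \flowp_{g}(A/rA,\overline{\R})\to \flowp_{g}(B/rB,\overline{f(\R)}),\qquad (\G,\ol{\vr})\mapsto (\G,\overline{f}\circ\ol{\vr}).
\]

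Next, for each $(\G,\ol{\vr})\in \flow_{g}(A/rA,\overline{\R})$ I would define a linear map of cones
\[
\kappa_{(\G,\ol{\vr})}\to \kappa_{(\G,\overline{f}\circ\ol{\vr})},\qquad (x_e)_{e\in E(\G)}\mapsto\left(\frac{\gcd(r,\overline{f}(\ol{\vr}(e)))}{\gcd(r,\ol{\vr}(e))}\cdot x_e\right)_{e\in E(\G)}.
\]
Each coefficient is a positive integer: setting $j=\gcd(r,\ol{\vr}(e))$, we have $\ol{\vr}(e)\in jA/rA$, so $\overline{f}(\ol{\vr}(e))\in jB/rB$ and hence $j$ divides $\gcd(r,\overline{f}(\ol{\vr}(e)))$. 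These cone maps are compatible with the face morphisms induced by specializations (which preserve $\ol{\vr}(e)$ on un-contracted edges) and with the natural inclusion $\Aut(\G,\ol{\vr})\subseteq \Aut(\G,\overline{f}\circ\ol{\vr})$, so they assemble into a morphism $f_*\col \Roots^{r,\trop}_{g}(A,\R)\to \Roots^{r,\trop}_{g}(B,f(\R))$ of generalized cone complexes. Compatibility with $\pi^{r,\trop}_{g,\R}$ is immediate, since
\[
\left|\frac{r}{\gcd(r,\overline{f}(\ol{\vr}(e)))}\right|\cdot\frac{\gcd(r,\overline{f}(\ol{\vr}(e)))}{\gcd(r,\ol{\vr}(e))}=\left|\frac{r}{\gcd(r,\ol{\vr}(e))}\right|,
\]
matching the direct projection to $\sigma_{\G}$ in Equation \eqref{eq:ye}.

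I would then check the geometric effect of $f_*$ on points. By Theorem \ref{thm:Delta_Phi}, a pair $(X,[\D])\in \Roots^{r,\trop}_{g}(A,\R)$ corresponds to a point of $\kappa^{\circ}_{(\G_X,\ol{\vr}_{\D})}$, where $\ol{\vr}_{\D}=\Phi_{\G_X}([\D])$. Lifting $\ol{\vr}_{\D}$ to an $A$-flow $\vr$, the $B$-flow $f\circ \vr$ is a lift of $\overline{f}\circ\ol{\vr}_{\D}$. Since the formula in Definition \ref{def:divisor_flow} is compatible with the group homomorphism $f$ (division by $r$ is unambiguous, as multiplication by $r$ is injective in $B$), we obtain $\D_{f\circ\vr}=f\circ\D_{\vr}\sim f\circ\D$, so $f_*$ sends the class of $(X,[\D])$ to the class of $(X,[f\circ\D])$, as required.

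Finally, the injectivity and surjectivity statements would be deduced by combining the analogous statements for $\overline{f}_*$ at the level of posets (via Proposition \ref{prop:rankmap}) with the fact that each cone map above is a positive-integer dilation on every coordinate and is therefore injective, and surjects onto the appropriately stretched sub-cone of its target. If $f$ is surjective, so is $\overline{f}$, giving surjectivity of $\overline{f}_*$ and hence of $f_*$; the injective case is parallel. The main obstacle I expect is the coherence check in the second step—verifying equivariance of the cone maps under specializations and automorphisms, so that the assignment is functorial over the indexing category $\flow_{g}(A/rA,\overline{\R})$—which proceeds along the lines of the corresponding verifications in Propositions \ref{prop:homeoRprime} and \ref{prop:multiplid}.
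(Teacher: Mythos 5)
Your proposal is correct and follows essentially the same route as the paper's proof: apply Proposition \ref{prop:rankmap} to the induced map $A/rA\to B/rB$ at the level of posets, define the cone maps by the same positive-integer scaling factor $\left|\gcd(r,f(\vr(e))+rB)/\gcd(r,\vr(e)+rA)\right|$, and deduce compatibility over $M_{g,n}^{\trop}$ and the injectivity/surjectivity claims from Proposition \ref{prop:rankmap}. Your write-up is in fact somewhat more detailed than the paper's (e.g.\ the explicit verification that the scaling factor is a positive integer and that $\D_{f\circ\vr}=f\circ\D_{\vr}$), which the paper leaves as "clear."
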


\begin{proof}
   By Proposition \ref{prop:rankmap}, we have a natural map $\flowp_{g}(A/rA, \R+rA)\to \flowp_g(B/rB, f(\R)+rB)$. Moreover, we have an induced function  $\kappa_{\Gamma,\vr+rA}\to\kappa_{\Gamma, f\circ\vr+rB}$ that takes a point in the cone $\kappa_{\Gamma,\vr+rA}$ with coordinates $(x_e)_{e\in E(\G)}$ to the point of the cone $\kappa_{\Gamma, f\circ\vr+rB}$ with coordinates $(y_e)_{e\in E(\Gamma)}$, where 
   \begin{equation}\label{eq:yexe2}
   y_e=x_e \left|\frac{\gcd (r,f(\vr(e))+rB)}{\gcd(r,\vr(e)+rA)}\right|.
   \end{equation}
   Notice that the number multiplying $x_e$ in Equation \ref{eq:yexe2} is a positive integer. Thus the above function is a map of cones inducing a map of generalized cone complexes $f_*\col \Roots^{r,\trop}_g(A,\R)\to \Roots^{r,\trop}_g(B,f(\R))$ over $M_{g,n}^{\trop}$. Clearly, the point parametring $(X,[\D])$ is sent to the point parametrizing $(X,[f\circ\D])$.  By Proposition \ref{prop:rankmap}, if $f$ is injective (respectively, surjective), then $f_*$ is injective (respectively, surjective).
\end{proof}

\begin{remark}
 Given a ramification sequence $\R=(a_1,\ldots, a_n,\ell)$ we denote by $\mathcal{M}_{g,n}^{r}(\R)$ the stack parametrizing pairs $(C,L)$ where $C$ is a pointed smooth curve of genus $g$ and $L$ is an invertible sheaf on $C$ such that $L^{r}=\omega^{\ell}_C(a_1P_1+\ldots+a_nP_n)$. 
 
The maps in Propositions \ref{prop:homeoRprime} and \ref{prop:multiplid}
have geometric analogues. Fix $r=dr'$. If $\R=d\R'$ for some $\R'$ then we have  a natural map
\begin{align*}
    \mathcal{M}_{g,n}^{r'}(\R')\to \mathcal{M}_{g,n}^{r}(\R)\\
     (C, \mathcal{L})  \mapsto (C,\mathcal{L}).
\end{align*}
This map is an isomorphism onto its image, and in fact its image is a union of irreducible components of $\mathcal{M}_{g,n}^{r}(\R)$.
We also have the natural map
\begin{align*}
    \mathcal{M}_{g,n}^{r}(\R)\to \mathcal{M}_{g,n}^{r'}(\R)\\
     (C, \mathcal{L})\mapsto (C,\mathcal{L}^{\otimes d}),
\end{align*}
which is an étale surjective map.
\end{remark}

\section{The tropicalization of the moduli space of roots}

In this section we consider the moduli space of roots on algebraic curves constructed by Jarvis and by Caporaso, Casagrande and Cornalba. We relate its Berkovich skeleton with the moduli space of roots on tropical curves constructed in the previous section.

Let $k$ be an algebraically closed ﬁeld.
We will always assume our curves to be nodal curves over $k$. We will denote by $g$ the (arithmetic) genus of a curve. A $n$-pointed curve is a curve together with $n$ distinct smooth points.  Recall that a pointed curve is \emph{stable} if so is its dual graph.  

Let $C$ be a curve. We denote by $C^{\sing}$ the singular locus of  $C$. Given a vertex $v$ of the dual graph of $C$, we let $C_v$ be the component of $C$ corresponding to $v$. 
Consider a subset $\Delta\subset C^{\sing}$ of nodes of $C$. We let $C_\Delta^\nu$ be the partial normalization of $C$ at $\Delta$. A \emph{blowup} of $C$ at the set $\Delta$ is a curve $C_\Delta$ which is the union of $C_\Delta^\nu$ and smooth connected rational components $E_p$ (called \emph{exceptional components}), for $p\in \Delta$, such that $E_p\cap C_\Delta^\nu$ is equal to the set of smooth points of $C_\Delta^\nu$ lying over the node $p\in \Delta$ of $C$. There is a natural map $\pi_\Delta\col C_\Delta\ra C$ contracting the exceptional components.

\subsection{Nets of limit $r$-roots}
\label{sec:nets}

Throughout this section we let $r$ be an integer.
We begin by recalling the definition of \emph{limit $r$-root} introduced in \cite{CCC}. 

\begin{definition}\label{def:r-root}(\cite[Definition 2.1.1]{CCC}). 
Let $C$ be a  curve and $N$ be an invertible sheaf on $C$ whose degree is a multiple of $r$.
A \emph{limit $r$-root} of $N$ is a triple $(C_\Delta,L,\alpha)$, where $\Delta$ is a subset of nodes of $C$, $L$ is an invertible sheaf on $C_\Delta$, and  $\alpha\col L^r\to \pi_\Delta^*(N)$ is a homomorphism, such that
\begin{enumerate}
  \item the degree of $L$ over every exceptional component of $C_\Delta$ is $1$;
  \item if we set  $\pi^\nu_\Delta:=\pi_\Delta|_{C^\nu_\Delta}$, then $\alpha|_{C^\nu_\Delta}$ factors through an isomorphism
  \[
  L|_{C^\nu_\Delta}^r\stackrel{\cong}{\to} (\pi^\nu_\Delta)^*(N)\left(-\sum_{p\in C^{\sing}}\sum_{q\in(\pi^\nu_\Delta)^{-1}(p)} u_q q\right),
  \]
  where $u_q$ are integers satisfying the following properties:
  \begin{enumerate}
       \item $u_q\in\{0,1,\dots,r-1\}$
      \item $u_q=0$ for $q\in \pi_\Delta^{-1}(p)$ if and only if $p\not\in \Delta$;
      \item $\sum_{q\in(\pi^\nu_\Delta)^{-1}(p)}u_q\equiv 0 \text{ mod } (r)$.
  \end{enumerate}  
\end{enumerate}
  For every $p\in C^{\sing}$, we call $\{u_q:q\in (\pi^{\nu}_\Delta)^{-1}(p)\}$ the \emph{set of vanishing orders at $p$} of the limit $r$-root.
\end{definition}

If $\mathcal C\to T$ is a family of nodal curves and $\mathcal N$ is an invertible sheaf on $\mathcal C/T$, there is a natural definition of a limit $r$-root $\mathcal L$ of $\mathcal N$, obtained by asking that the restriction $\mathcal L|_{C_t}$ of $\mathcal L$ to the fiber $C_t$ over any closed point $t\in T$ is a limit $r$-root of the restiction $\mathcal N|_{C_t}$ (see \cite[Section 2.1]{CCC}).

An equivalent definition of limit $r$-root using torsion-free rank-$1$ sheaves is given by Jarvis.

\begin{definition}(\cite[Definition 2.1.2]{Jarvis})\label{def:Jarvis}
Let $C$ be a  curve and $N$ an invertible sheaf on $C$ whose degree is a multiple of $r$.
   A \emph{limit $r$-root} of $N$ (in the sense of Jarvis) on $C$ is a pair  $(F,c)$, where $F$ is a torsion-free rank-$1$ sheaf over $C$ and $c$ is a homomorphism $c\colon F^r\to N$ whose cokernel has rank $r-1$ at every node of $C$ at which $F$ is not locally free. We denote by $\Delta(F)$ the locus of nodes of $C$ where the sheaf $F$ fails to be locally free.
\end{definition}

\begin{remark}
The two notions of limit $r$-roots are equivalent (see \cite[Theorem 4.2.3]{CCC}). The equivalence takes a triple $(C_\Delta,L,\alpha)$ as in Definition \ref{def:r-root} to the limit $r$-root $(F,c)$ (in the sense of Jarvis) with $F=(\pi_\Delta)_*(L)$ and $c=(\pi_\Delta)_*(\alpha)$.
\end{remark}

\begin{remark}
\label{rem:flow_fixed_at_node}
  Let $f\col \mathcal{C}\to T$ be a family of curves with $T$ connected and $\sigma\col T\to \mathcal{C}$ be a section of $f$ whose image lands in the singular locus of $f$. Let $\mathcal N$ be an invertible sheaf on $\mathcal C/T$.  Given a limit $r$-root of $\mathcal N$, 
  the set of vanishing orders at $\sigma(t)$ of $\mathcal L|_{C_t}$ for $t$ varying in $T$, is independent of $t$. This follows from \cite[Propositions 3.3.1 and 5.4.3]{Jarvis} (see also \cite[Section 3.3]{CCC}).
\end{remark} 

Next, consider an invertible sheaf $\mathcal N$ on the universal family over $\overline{\mathcal M}_{g,n}$. By \cite[Theorem 2.4.1 and 4.2.1]{CCC} there is a Deligne-Mumford stack $\overline{\mathcal S}^r_{g,n}(\mathcal N)\to \overline{\mathcal M}_{g,n}$ parametrizing isomorphism classes of limit $r$-roots of $\mathcal N|_C$, for $C$ varying through all $n$-pointed stable curves. 
The stack $\overline{\mathcal S}^r_{g,n}(\mathcal N)$ could fail to be normal (this happens if and only if $r$ is not prime). Jarvis gave a modular description of its normalization in terms of nets of limit roots. We recall some key results here, referring to \cite{JarvisNet} for more details.
  
  \begin{definition}\label{def:net}(\cite[Definition 2.4]{JarvisNet})
  Let $C$ be a nodal curve and $N$ an invertible sheaf on $C$. 
  A \emph{net of limit $r$-roots of $N$} is a collection $\net = \{F_d,c_{d,d'}\}$ for positive integers $d,d'$ dividing $r$ where each $F_{d}$ is a torsion free rank-$1$ sheaf and $c_{d,d'}$ are maps $c_{d,d'}\colon F_{d}^{d/d'}\to F_{d'}$  such that $(F_{d}, c_{d,d'})$ is a $d/d'$-root of $F_{d'}$, and such that the natural diagrams commute. 
     
     We stress that, given a net of limit $r$-roots $\{F_d,c_{d,d'}\}$, the pair $(F_r,c_{r,1})$ is a limit $r$-root of $N$ (in the sense of Jarvis). We write $\Delta(\net):=\Delta(F_r)$ (see Definition \ref{def:Jarvis}).
  \end{definition}

\begin{definition}
Let $r$ be an integer. Let $C$ be a curve and $N$  an invertible sheaf on $C$ whose degree is a multiple of $r$.
Consider a limit $r$-root $(C_\Delta,L,\alpha)$ of $N$ with vanishing order $\{u_q:q\in (\pi^{\nu}_\Delta)^{-1}(p)\}$ at $p\in C^{\sing}$. The \emph{combinatorial type} of $(C_\Delta,L,\alpha)$  is the pair $(\Gamma,\vr)$, where $\Gamma$ is the dual graph of $C$  and $\varphi$ is the flow on $\Gamma$ taking $e\in \ora{E}(\Gamma)$ to $\vr(e)=u_q$ for $q\in (\pi_\Delta^{\nu})^{-1}(p_e)$ and $q\in C_{\sigma(e)}$ where $p_e$ is the node in $C^{sing}$ corresponding to $e$.
\par 
  If $\net = \{F_d,c_{d,d'}\}$ is a net of limit $r$-root of $N$, then the \emph{combinatorial type} of the pair $(C,\net)$ is the combinatorial type of the limit $r$-root corresponding to $(F_r,c_{r,1})$.
\end{definition}

   The definition of a net of limit $r$-roots for families of nodal curves requires some additional technical conditions, see \cite[Definition 2.9]{JarvisNet}. These conditions are needed for the moduli space of nets to be well behaved. We will not give the precise definition, as we will only need its consequences, that are described in Equations \eqref{eq:local} and \eqref{eq:forgetful-map} below.

Consider an invertible sheaf over the universal family $\mathcal C\ra \overline{\mathcal M}_{g,n}$ over $ \overline{\mathcal M}_{g,n}$. Recall that every such sheaf is of the form $\mathcal N(\mathcal R)=\omega_{\C}^\ell(a_1\Sigma_1+\ldots +a_n\Sigma_n)$ for some $\mathbb Z$-ramification sequence $\R=(a_1,\ldots, a_n,\ell)$, where $\Sigma_i$ are the images of the sections of the universal family.

By \cite[Theorem 2.3]{JarvisNet},
for every $\mathbb Z$-ramification sequence $\R=(a_1,\ldots, a_n,\ell)$ such that $r$ divides $a_1+\ldots+a_n+\ell(2g-2)$, 
there exists a smooth Deligne-Mumford stack $\overline{\mathcal M}_{g,n}^r(\mathcal R)$ over 
$\overline{\mathcal M}_{g,n}$ parametrizing isomorphism classes of nets of limit $r$-roots of $\mathcal N(\mathcal R)$.

 Consider a point $[C, \net]$ of $\overline{\M}_{g,n}^r(\R)$ parametrizing the class of a pair $(C, \net)$, where $C$ is a stable curve and $\net$ is a net of limit $r$-roots of $\mathcal{N}(\R)|_C$. Let $(\Gamma,\varphi)$ be the combinatorial type of $(C, \net)$. By \cite[Theorem 2.2]{JarvisNet}, the completion of the local ring of $\overline{\M}_{g,n}^r(\R)$ at $[C, \mathfrak{F}]$ is given by
\begin{equation}\label{eq:local}
\widehat{\mathcal{O}}_{\overline{\M}_{g,n}^r(\R),[C, \mathfrak{F}]} \cong k[[\tau_e]]_{e\in \Delta(\mathfrak{F})}\otimes k[[t_e]]_{e\in E(\Gamma)\setminus\Delta(\mathfrak{F})}\otimes k[[t_1,\ldots, t_{3g-3+n-|E(\Gamma)}]].
\end{equation}

 The space $\overline{\mathcal M}_{g,n}^r(\mathcal R)$ is endowed with a natural forgetful map $\pi^r_{g,\R}\col \overline{\mathcal M}_{g,n}^r(\mathcal R)\to \overline{\mathcal M}_{g,n}$. 
Write the completion of the local ring of $\overline{\M}_{g,n}$ at the point $[C]$ as 
\[
\widehat{\mathcal{O}}_{\overline{\M}_{g,n},[C]}\cong  k[[t_e]]_{e\in E(\Gamma)}\otimes k[[t_1,\ldots, t_{3g-3+n-|E(\Gamma)}]].
\]
 Then the natural forgetful map $\overline{\M}_{g,n}^r(\R)\to \overline{\M}_{g,n}$ is given locally by 
\begin{equation}
\label{eq:forgetful-map}
\begin{aligned}
\widehat{\mathcal{O}}_{\overline{\M}_{g,n},[C]}&\to\widehat{\mathcal{O}}_{\overline{\M}_{g,n}^r(\R),[C, \mathfrak{F}]}\\
t_e&\mapsto \tau_e^{s_e},\quad \text{if $e\in \Delta(\mathfrak{F})$ }\\
t_e& \mapsto t_e,  \quad\text{ if $e\notin \Delta(\mathfrak{F})$ }\\
t_i& \mapsto t_i, \quad \forall i,
\end{aligned}
\end{equation}
where $s_e := r/\gcd(r, \varphi(e))$. These local equations follow from the technical conditions defining a family of nets of limit $r$-roots (see \cite[Definition 2.9]{JarvisNet} for the conditions and \cite[Theorem 2.2]{JarvisNet} for the results).

  By \cite[Theorem 2.3]{JarvisNet}, we have that $\overline{\mathcal M}_{g,n}^r(\mathcal R)$ is the normalization of $\overline{\mathcal S}^r_{g,n}(\mathcal R)$, where the normalization map $ \overline{\mathcal M}_{g,n}^r(\mathcal R)\to \overline{\mathcal S}^r_{g,n}(\mathcal R)$ takes a point $[C, \net]$ in $\overline{\mathcal M}_{g,n}^r(\mathcal R)$, with $\net = \{F_d,c_{d,d'}\}$, to the point of $\overline{\mathcal S}^r_{g,n}(\mathcal R)$ representing $(F_r,c_{r,1})$.

We let $\mathcal M_{g,n}^r(\R)$ be the open dense subset of $\overline{\mathcal M}_{g,n}^r(\R)$ parametrizing nets of limit roots over smooth curves. The  open embedding $\mathcal M_{g,n}^r(\R)\subset \overline{\mathcal M}_{g,n}^r(\R)$ 
gives rise to a stratification of $\overline{\mathcal M}_{g,n}^r(\mathcal N)$, which we can write as follows:
\[
\overline{\mathcal M}_{g,n}^r(\R=\sqcup_\Gamma (W_{\Gamma,1}\sqcup \dots\sqcup W_{\Gamma,m_\Gamma}),
\]
for $\Gamma$ varying over stable genus-$g$ graphs with $n$-legs and
where the loci $W_{\Gamma,1},\dots,W_{\Gamma,m_\Gamma}$ are the irreducible components of $\pi_{g,\R}^{-1}(\mathcal M_\Gamma)$, where $\M_{\Gamma}\subset \ol{\M}_{g,n}$ is the locus of $\overline{\M}_{g,n}$ parametrizing stable curves with dual graph isomorphic to $\Gamma$. Of course, we have $\mathcal M_{g,n}^r(\mathcal N)=\pi_{g,\R}^{-1}(\mathcal M_{g,n})$.

\begin{prop}\label{prop:strata}
If $W$ is a stratum of $\overline{\mathcal M}_{g,n}^r(\mathcal N)$, then the combinatorial type of every net of $r$-root parametrized by  a  point $w\in W$ is independent of $w$. Moreover, if $W'$ is another stratum of $\overline{\mathcal M}_{g,n}^r(\mathcal N)$ such that $W$ is contained in the closure of $W'$ in $\overline{\mathcal M}_{g,n}^r(\mathcal N)$ and if we let $(\Gamma_W,\vr_W)$ and $(\Gamma_{W'},\vr_{W'})$ the combinatorial types of a net of limit $r$-root parametrized by a  point of $W$ and $W'$, respectively, then $(\Gamma_W,\vr_W)$ specializes to $(\Gamma_{W'},\vr_{W'})$. %combinatorial type of any pair in $W$ specializes to the combinatorial type of any pair in $W'$.
\end{prop}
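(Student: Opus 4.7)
First I would establish that the combinatorial type is constant on a stratum $W$. Since $W$ is, by definition, an irreducible component of $\pi_{g,\R}^{-1}(\M_\Gamma)$, it is irreducible and hence connected, and the dual graph of the curve parametrized by any point of $W$ is canonically identified with $\Gamma$; so it remains to show that the associated flow $\vr$ does not depend on the chosen point. For each edge $e \in E(\Gamma)$ one may \'etale-locally on $W$ choose a section through the node $p_e$ of the universal curve, and Remark \ref{rem:flow_fixed_at_node} forces the vanishing orders at $p_e$ to be constant along such a section. Consequently $\vr(e)$ is locally constant on $W$, and the connectedness of $W$ yields the first claim.

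Next I would address the specialization statement. Assume $W \subset \overline{W'}$, pick $w \in W$, and examine the formal neighborhood of $w = [C,\net]$ using \eqref{eq:local} and \eqref{eq:forgetful-map}. These describe that neighborhood as a product of formal discs with coordinates $\{\tau_e\}_{e \in \Delta(\net)}$, $\{t_e\}_{e \in E(\Gamma_W) \setminus \Delta(\net)}$, and additional moduli parameters $t_i$. Moreover, for any specialization $\Gamma_W \to \Gamma'$, the locus $\pi_{g,\R}^{-1}(\M_{\Gamma'})$ meets this formal neighborhood precisely along the subscheme where $\tau_e = 0$ (or $t_e = 0$) for each edge $e$ surviving in $\Gamma'$, while the remaining such coordinates are left free (they correspond to the nodes smoothed in the passage from $\Gamma_W$ to $\Gamma'$). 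Setting $\Gamma' = \Gamma_{W'}$ and choosing the local branch associated to $W'$ produces a canonical specialization $\iota\col \Gamma_W \to \Gamma_{W'}$ which contracts exactly the edges of $E(\Gamma_W) \setminus E(\Gamma_{W'})$.

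To conclude, I would choose a morphism $\Spec R \to \overline{\M}_{g,n}^r(\R)$, with $R$ a complete DVR, sending the closed point to $w$ and the generic point to a point of $W'$. Over $\Spec R$, the nodes indexed by edges in $E(\Gamma_{W'}) \subset E(\Gamma_W)$ persist throughout the family and therefore define sections $\Spec R \to \mathcal{C}$ landing in the singular locus. Applying Remark \ref{rem:flow_fixed_at_node} to each such section yields $\vr_W(e) = \vr_{W'}(e)$ for every $e \in E(\Gamma_{W'})$, which is precisely the condition $\iota_*(\vr_W) = \vr_{W'}$ from the definition of specialization of flows. Hence $(\Gamma_W,\vr_W)$ specializes to $(\Gamma_{W'},\vr_{W'})$ in $\flowp_g(\mathbb{Z},\R)$.

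The main subtlety will lie in the middle step: one has to use \eqref{eq:local}--\eqref{eq:forgetful-map} to identify the local strata of $\overline{\M}_{g,n}^r(\R)$ near $w$ with specializations of $\Gamma_W$, in a manner compatible with the forgetful map to $\overline{\M}_{g,n}$, and to verify that the algebraic operation of smoothing a node corresponds to contracting the associated edge in the dual graph, so that the resulting change of combinatorial type is genuinely a specialization in the poset-theoretic sense of Section \ref{sec:poset}.
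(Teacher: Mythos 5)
Your proposal is correct and follows essentially the same route as the paper: for constancy on a stratum it uses irreducibility of $W$ together with \'etale-local sections through each node and Remark \ref{rem:flow_fixed_at_node}, and for the specialization statement it again applies that remark to the sections given by the persistent nodes along a connected family joining $W'$ to $W$. The paper's own proof of the second part is just a one-line appeal to the same remark, so your DVR argument merely spells out the details the paper leaves implicit.
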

\begin{proof}
The stratum $W$ can be written as $W=W_{\Gamma, j}$ for some $j$. Hence, for every point $[C]\in W$ the dual graph of $C$ is isomorphic to $\Gamma$.
   Since $W$ is irreducible, it is sufficient to prove that the combinatorial type of every net of limit $r$-root parametrized by a point $w\in W$ is isomorphic to the  combinatorial type of the net of limit $r$-root parametrized by the generic point of $W$. In turn, we can argue locally étale, and hence we can conclude using Remark \ref{rem:flow_fixed_at_node}, since we have a section for each node.\par 
    The second statement follows from the first and Remark \ref{rem:flow_fixed_at_node}.
\end{proof}

Consider a stratum $W$ of $\overline{\mathcal M}_{g,n}^r(\mathcal N)$. \emph{The combinatorial type} of $W$ is the combinatorial type of a net of limit $r$-root parametrized by any point of $W$. This definition is well-defined by Proposition \ref{prop:strata}. 
Notice that, if $(\Gamma_W,\vr_W)$ is the  combinatorial type of $W$, 
we have that $W$ lies over a stratum $\M_{\Gamma}$ of $\overline{\M}_{g,n}$ if and only if $\Gamma_W=\Gamma$.

 Consider a stable genus-$g$  graph $\Gamma$ with $n$ legs. We define
 \[
 \widetilde{\M}_{\Gamma}=\prod_{v\in V(\Gamma)}\mathcal M_{w(v),\deg_\Gamma(v)+|L^{-1}(v)|}.
 \]
 Recall that we have a natural quotient map $\widetilde{\M}_{\Gamma}\to \M_{\Gamma} $ such that $\M_{\Gamma}\cong\widetilde{\M}_{\Gamma}/\Aut(\Gamma)$. Notice that we can view  $\widetilde{\M}_{\Gamma}$ as the space parametrizing pairs $(C,\tau)$ where $C$ is a stable nodal curve with dual graph isomorphic to $\Gamma$ and $\tau\col \Gamma_C\to \Gamma$ is such an isomorphism.

  Let $W$ be a stratum of $\overline{\M}_{g,n}^r(\mathcal{N})$ of combinatorial type $(\Gamma,\vr)$. The product $W\times_{\mathcal M_\Gamma} \widetilde{\mathcal M}_{\Gamma}$ parametrizes triples $(C,\net,\tau)$ where $\net$ is a net of limit $r$-roots of $\mathcal N|_C$ and $\tau\col \Gamma_C\to \Gamma$ is an isomorphism. Notice that we have an action of $\Aut(\Gamma)$ on $W\times_{\mathcal M_\Gamma} \widetilde{\M}_\Gamma$. 
  All the connected components of $W\times_{\M_{\Gamma}} \widetilde{\M}_{\Gamma}$ have the same stabilizer, since the map $W\times_{\M_{\Gamma}} \widetilde{\M}_{\Gamma}\to W$ is Galois.

\begin{prop}
\label{prop:stabilizer}
Let $W$ be a stratum of $\overline{\M}_{g,n}^r(\mathcal{\mathcal{N}})$ of combinatorial type $(\Gamma,\vr)$. Then the stabilizer of the connected components of $W\times_{\mathcal M_\Gamma} \widetilde{\M}_\Gamma$ is contained in $\Aut(\Gamma,\vr)$. 
\end{prop}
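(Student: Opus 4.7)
The plan is to fix a connected component $Z$ of $W\times_{\M_\Gamma}\widetilde{\M}_\Gamma$, take $\alpha\in\Aut(\Gamma)$ in its stabilizer, and show that $\alpha_*\vr=\vr$. First I would define a map
\[
\Theta\col W\times_{\M_\Gamma}\widetilde{\M}_\Gamma\longrightarrow C_1(\Gamma,\mathbb{Z}/r\mathbb{Z})
\]
by $\Theta((C,\net),(C,\tau)):=\tau_*(\vr_C)$, where $(\Gamma_C,\vr_C)$ is the combinatorial type of $\net$ computed on the dual graph $\Gamma_C$ of $C$, and $\tau\col\Gamma_C\to\Gamma$ is the chosen isomorphism. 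In other words, $\Theta$ records a specific representative in $\Gamma$ of the isomorphism class ``combinatorial type of $\net$,'' obtained by transporting along $\tau$.

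Next I would argue that $\Theta$ is locally constant, hence constant on the connected component $Z$, with some value $\vr\in C_1(\Gamma,\mathbb{Z}/r\mathbb{Z})$. By Proposition \ref{prop:strata} and Remark \ref{rem:flow_fixed_at_node}, the combinatorial type of the net is unaffected by deformations that preserve the dual graph, i.e.\ as long as we remain in the stratum $W$. Since $\widetilde{\M}_\Gamma\to\M_\Gamma$ is étale, the projection $W\times_{\M_\Gamma}\widetilde{\M}_\Gamma\to W$ is étale as well, so the chart $\tau$ varies étale-locally in a way compatible with the constant combinatorial type; this suffices to conclude that $\Theta$ is locally constant.

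Finally I would use the stabilizing hypothesis. The action of $\alpha$ on the fiber product sends a point $z=((C,\net),(C,\tau))$ to $\alpha\cdot z=((C,\net),(C,\alpha\circ\tau))$, and therefore
\[
\Theta(\alpha\cdot z)=(\alpha\circ\tau)_*(\vr_C)=\alpha_*(\tau_*(\vr_C))=\alpha_*(\vr).
\]
Since $\alpha$ preserves $Z$, the left-hand side is also equal to $\vr$, giving $\alpha_*\vr=\vr$, i.e.\ $\alpha\in\Aut(\Gamma,\vr)$, as desired.

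The main obstacle is the middle step, namely the verification that $\Theta$ is locally constant on $W\times_{\M_\Gamma}\widetilde{\M}_\Gamma$. The combinatorial type of a net is a node-by-node invariant (determined by the vanishing orders $u_q$), so the delicate point is to confirm that as one varies $(C,\tau)$ in an étale neighborhood, the induced identifications of the nodes of $C$ with edges of $\Gamma$ fit together consistently with $\tau$, so that pushing forward by $\tau$ produces the same flow on $\Gamma$ throughout a small neighborhood. Once this is granted, the concluding manipulation is purely formal.
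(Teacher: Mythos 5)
Your argument is correct and is essentially the paper's proof repackaged: the paper likewise reduces to showing that the flow transported to $\Gamma$ via the marking $\tau$ is constant along a connected component, and it settles the ``delicate point'' you flag by observing that over $\widetilde{\M}_\Gamma$ the nodes of the universal curve are cut out by sections $\sigma_e$ indexed by $E(\Gamma)$, to which Remark \ref{rem:flow_fixed_at_node} applies edge by edge. The only ingredient missing from your write-up is that one observation about the sections, which holds by the construction of $\widetilde{\M}_\Gamma$ as a product of moduli of pointed curves.
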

\begin{proof}
   Consider two points $(C,\net,\tau_1)$ and $(C,\net,\tau_2)$ in the same connected component of $W\times_{\M_\G} \widetilde{\M}_{\Gamma}$, for some $\tau_1,\tau_2\in \Aut(\Gamma)$. Let $\mathcal C\to \mathcal M_\Gamma$ be the restriction of the universal family over $\overline{\mathcal M}_{g,n}$, and set $\widetilde{\mathcal C}=\mathcal C\times_{\mathcal M_\Gamma}\widetilde{\mathcal M}_\Gamma$. For every edge $e\in E(\Gamma)$ we have a  section $\widetilde{\mathcal M}_\Gamma\to \widetilde{\mathcal C}$ whose image is the locus of the node corresponding to $e$. This gives rise to sections $\sigma_e\col W\times_{\mathcal M_\Gamma} \widetilde{\M}_{\Gamma}\to %\mathcal{C}_{W\times \widetilde{\M}_{\Gamma}}
   W\times_{\mathcal M_\Gamma} \widetilde{\mathcal C}$. By Remark \ref{rem:flow_fixed_at_node}, we have that the flow of the combinatorial type of $\net$ has the same value on the edges $\tau_1^{-1}(e)$ and $\tau_2^{-1}(e)$. This means that $\tau_1^{-1}\circ\tau_2\in \Aut(\Gamma,\vr)$, which proves that the stabilizer of the connected components of $W\times_{\mathcal M_\Gamma} \widetilde{\M}_\Gamma$ is contained in $\Aut(\Gamma,\vr)$.
\end{proof}

\subsection{The skeleton of the moduli space of nets}

Let $\mathcal Y$ be a separated, connected, proper Deligne-Mumford stack over an algebraically closed field $k$. The \emph{Berkovich analytification} $\Y^{an}$ of $\Y$ is an analytic stack. We work with the topological space underlying $\Y^{an}$, whose points are morphisms $\Spec(K)\to \Y$ that extends to $\Spec(R)\to \Y$, where $K$ is a non Archimedean valued field extension of the trivially valued field $k$ and $R$ is the valuation ring of $K$, up to equivalence by further valued field extensions  (see \cite{Be} and \cite[Section 3]{U17}). We abuse notation and use $\Y^{\an}$ for both the stack and the topological space underlying it. One can choose a representative $\Spec(K)\to \Y$ with $K$ complete for each point in $\Y^{an}$.  
%There is a distinguished subspace $\Y^{\beth}\subset \Y^{an}$ consisting of points $\Spec(K)\to \Y$   If $\Y$ is proper then $\Y^{\beth}=\Y^{an}$. \par
  
  Let $U\subset \Y$ be a toroidal embedding of Deligne-Mumford stacks. Define the sheaves $\mon_{\Y}$ and $\eff_{\Y}$ as the \'etale sheaves over $\Y$ such that, for every \'etale morphism $V\to \Y$ from a scheme $V$, we have that $\mon_\Y(V)$ (respectively, $\eff_\Y(V)$) is the group of Cartier divisors on $V$ (respectively, the submonoid of effective Cartier divisors on $V$) supported on $V\setminus U_V$, where $U_V=U\times_\Y V$. \par
	For each stratum $W\subset \Y$ and a point $w\in W$, we have an action of the \'etale fundamental group $\pi_1^{\et}(W,w)$ on the stalk $\mon_{\Y,w}$ preserving $\eff_{\Y,w}$. The \emph{monodromy group} $H_W$ is defined as the image of $\pi_1^{\et}(W,w)$ in $\Aut(\mon_{\Y,w})$.\par
   For each stratum $W\subset \Y$, there is an associated extended cone
\[
\ol{\sigma}_W:=\Hom_{\text{monoids}}(\eff_{\Y,w},\ol{\RR}_{\geq0}),
\]
where $\ol{\RR}=\RR\cup\{\infty\}$.
 The \emph{skeleton} of $\Y$ is the extended generalized cone complex:
\[
\ol{\Sigma}(\mathcal Y):=\lim_{\lra}\ol{\sigma}_W,
\]
where the arrows $\ol{\sigma}_W\to\ol{\sigma}_{W'}$ are given by the inclusions $W'\subset \ol{W}$, where $\ol W$ is the closure of $W$ in $\Y$, and by the monodromy group $H_W$ when $W=W'$. For more details, we refer to \cite{T} and \cite[Section 6]{ACP}.

There is a retraction map ${\bf p}_{\mathcal Y}\col \Y^{\an}\ra \ol{\Sigma}(\mathcal Y)$ defined as follows. Let $\psi\col \Spec(R)\to \Y$ be a point in $\Y^{\an}$, with $R$ complete. Let $w\in \Y$ be the image of the closed point in $\Spec(R)$ and $W$ be the stratum of $\Y$ containing $w$. We have a chain of maps
\begin{equation}
\label{eq:p_retraction}
\eff_{\Y,w}\stackrel{\epsilon}{\lra} \widehat{\O}_{\Y,w}\stackrel{\psi^\#}{\lra} R\stackrel{\nu_R}{\lra} \ol{\RR}_{>0},
\end{equation}
where $\epsilon$ is the map that takes an effective divisor to its local equation and $\nu_R$ is the valuation of $R$. The composition is a morphism of monoids. We  define ${\bf p}_{\Y}(\psi)\in \ol{\Sigma}(\Y)$ as the equivalence class of $\nu_R\circ\psi^\#\circ\epsilon\in\ol{\sigma}_{W}$ (see \cite[Section 6]{ACP} for details, in particular  \cite[Propositions 6.1.4. and 6.2.6]{ACP}).

The inclusions $\mathcal M_{g,n}\subset \ol{\mathcal M}_{g,n}$ and $\M_{g,n}^r(\R)\subset  \overline{\M}_{g,n}^r(\R)$ are toroidal embeddings of Deligne-Mumford stacks  (see \cite[Section 3.3]{ACP} and Proposition \cite[Theorem 2.4.2]{JarvisNet}). We let $\ol{\Sigma}(\ol{\mathcal {M}}_{g,n})$ and  $\ol{\Sigma}(\ol{\M}_{g,n}^{r}(\R))$  be the skeleta of $\ol{\mathcal  M}_{g,n}$ and $\ol{\mathcal  M}_{g,n}^r(\R)$, respectively.

The forgetful map $\pi_{g,\mathcal R}\colon \ol{\mathcal  M}_{g,n}^r(\R)\ra \ol{\mathcal M}_{g,n}$ induces a natural map 
\[
\pi^{\an}_{g,\mathcal R}\col\ol{\mathcal M}_{g,n}^r(\R)^{\an} \ra \ol{\mathcal M}_{g,n}^{\an}.
\]
Since  $\ol{\M}_{g,n}^r(\R)$ and $\ol{\M}_{g,n}$ are proper and the forgetful map is toroidal (by Equation \ref{eq:forgetful-map}), we have, by \cite[Proposition 6.1.8]{ACP}, that the map $\pi^{\an}_{g,\mathcal R}$ restricts to a map of generalized extended cone complexes 
\[
\ol{\Sigma}(\pi^{\an}_{g,\mathcal R})\col  \ol{\Sigma}(\ol{\M}_{g,n}^r(\R)) \ra  \ol{\Sigma}(\ol{\mathcal M}_{g,n}).
\]
Moreover, we have the following functorial property with respect to the retraction map:
\begin{equation}\label{prop:funct}
{\bf p} _{\ol{\M}_{g,1}}\circ\pi^{\an}_{g,\mathcal R}=\ol{\Sigma}(\pi^{\an}_{g,\mathcal R})\circ{\bf p} _{\ol{\M}_{g,n}^r(\R)}.
\end{equation}

%\begin{prop}\label{prop:funct}
%The map $\pi^{\an}\col\ol{\mathcal M}_{g,n}^r(\R)^{\an} \ra \ol{\mathcal M}_{g,n}^{\an}$ restricts to a map of generalized extended cone complexes $\ol{\Sigma}(\pi)\col  \ol{\Sigma}(\ol{\M}_{g,n}^r(\R)) \ra  \ol{\Sigma}(\ol{\mathcal M}_{g,n})$. We have 
%\[
%{\bf p} _{\ol{\M}_{g,1}}\circ\pi^{\an}=\ol{\Sigma}(\pi)\circ{\bf p} _{\ol{\M}_{g,n}^r(\R)}.
%\] 
%\end{prop}

%\begin{proof}
%Since  $\ol{\M}_{g,n}^r(\R)$ and $\ol{\M}_{g,n}$ are proper, then $\ol{\M}_{g,n}^r(\R)^{an}=\ol{\M}_{g,n}^r(\R)^\beth$ and $\ol{\M}_{g,n}^{an}=\ol{\M}_{g,b}^{\beth}$. The result follows just combining \cite[Proposition 6.1.8]{ACP} and Proposition \ref{}.
%\end{proof}

We define the \emph{tropicalization map}
\begin{equation}\label{eq:tropmap}
\trop_{\ol{\M}_{g,n}^r(\R)}\col \overline{\M}_{g,n}^r(\R)^{\an}\to \overline{\Roots}_{g}^{r,\trop}(\R)
\end{equation}
as follows.
 Fix a point $\psi\col \Spec(R)\to \overline{\M}_{g,n}^r(\R)$ of $\overline{\M}_{g,n}^r(\R)^{\an}$ with $R$ complete and let $w=[C,\net]$ be  the image  of the closed point of $\Spec(R)$ via $\psi$ (this is the class of a pair consisting of a curve $C$ and a net $\net$ of limit $r$-roots on $C$). We get a ring homomorphism
\[
\psi^\#\col \widehat{\O}_{\overline{\M}_{g,n}^r(\R),w}\to R.
\]
Let $(\Gamma,\vr)$ be the combinatorial type of $(C,\net)$. Recall Equation \eqref{eq:local}. We define a length function $\ell\col E(\Gamma)\to \ol{\RR}_{>0}$ given by 
\begin{equation}
\label{eq:trop_length_function}
    \ell(e)=\begin{cases}          s_e\nu_R(\psi^\#(\tau_e)) & \text{ if } e \in \Delta(\net), \\
          \nu_R(\psi^\#(t_e)) & \text{ if } e \notin \Delta(\net).
    \end{cases}
\end{equation}  
We define $\trop_{\ol{\M}_{g,n}^r(\R)}(\psi)=[X,[\D_\vr]]$, where $X$ is the (possibly extended) tropical curve with model $(\Gamma,\ell)$. Note that the point $(x_e)\in \kappa_{(\Gamma,\vr)}$ that corresponds to $[X,[\D_{\vr}]]$ is given by 
\begin{equation}
\label{eq:trop_length_function1}
    x_e=\begin{cases}
          \nu_R(\psi^\#(\tau_e)) & \text{ if } e \in \Delta(\net), \\
          \nu_R(\psi^\#(t_e)) & \text{ if } e \notin \Delta(\net).
    \end{cases}
\end{equation}

\begin{corollary}\label{cor:monodromy}
Let $W$ be a stratum of $\overline{\M}_{g,n}^r(\mathcal{R})$ of combinatorial type $(\Gamma,\vr)$.
  Then the monodromy of the stratum $W$ is contained in $\Aut(\Gamma,\vr)$.
\end{corollary}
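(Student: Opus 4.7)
The strategy is to realize the monodromy group $H_W$ as the Galois group of a connected component of the étale cover $W \times_{\M_\Gamma} \widetilde{\M}_\Gamma \to W$, and then invoke Proposition \ref{prop:stabilizer}. The starting observation is that for $\overline{\M}_{g,n}$ itself the monodromy of $\M_\Gamma$ equals $\Aut(\Gamma)$ and is trivialized by the étale cover $\widetilde{\M}_\Gamma \to \M_\Gamma$; moreover this monodromy acts by permutation of the branches of the boundary divisor at $[C]$, and these branches are indexed by $E(\Gamma)$.

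The next step is to transfer this picture to $\overline{\M}_{g,n}^r(\R)$. By the local description \eqref{eq:local}, the branches of the boundary divisor of $\overline{\M}_{g,n}^r(\R)$ at $w = [C,\net]$ are again indexed by $E(\Gamma)$: they are the vanishing loci $\{\tau_e = 0\}$ for $e \in \Delta(\net)$ and $\{t_e = 0\}$ for $e \notin \Delta(\net)$. In particular $H_W$ embeds in $\Aut(\Gamma)$ as the subgroup of permutations of these branches, and by \eqref{eq:forgetful-map} this embedding is compatible with the standard embedding $H_{\M_\Gamma} = \Aut(\Gamma) \hookrightarrow \Aut(\Gamma)$ coming from $\overline{\M}_{g,n}$, since the branch $\{\tau_e = 0\}$ upstairs covers the branch $\{t_e = 0\}$ downstairs with ramification $s_e$. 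Consequently the cover $W \times_{\M_\Gamma} \widetilde{\M}_\Gamma \to W$, being the pullback along $W \to \M_\Gamma$ of the Galois cover that trivializes the monodromy of $\M_\Gamma$, trivializes the action of $H_W$ on $\mon_{\overline{\M}_{g,n}^r(\R),w}$. Therefore, for any connected component $\widetilde W$ of this pulled-back cover, the Galois group of $\widetilde W \to W$ coincides with $H_W$.

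Finally, Proposition \ref{prop:stabilizer} states that the stabilizer of any connected component of $W \times_{\M_\Gamma} \widetilde{\M}_\Gamma$ inside $\Aut(\Gamma)$ is contained in $\Aut(\Gamma,\vr)$, which yields the desired inclusion $H_W \subseteq \Aut(\Gamma,\vr)$. The delicate point I expect to be the main obstacle is the identification of $H_W$ with the Galois group of $\widetilde W \to W$: one has to rule out that the $r$-root structure could contribute monodromy beyond what is already present in $\overline{\M}_{g,n}$. This is handled by the explicit local form \eqref{eq:forgetful-map} of $\pi_{g,\R}$, which shows that each branch of the boundary divisor of $\overline{\M}_{g,n}^r(\R)$ at $w$ is a ramified cover of a single branch of the boundary divisor of $\overline{\M}_{g,n}$ at $[C]$ with the same edge label, so every monodromy permutation of the branches upstairs is inherited from a monodromy permutation downstairs, and therefore is realized by the Galois action on $\widetilde{\M}_\Gamma \to \M_\Gamma$.
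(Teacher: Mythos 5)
Your argument is correct and follows essentially the same route as the paper: both proofs pull back to the Galois cover $W\times_{\M_\Gamma}\widetilde{\M}_\Gamma\to W$, observe that the monodromy on $\Eff_{\ol{\M}_{g,n}^r(\R),w}$ (whose generators are indexed by $E(\Gamma)$ via Equation \eqref{eq:local}) is trivialized there, identify $H_W$ with the stabilizer of a connected component, and conclude by Proposition \ref{prop:stabilizer}. The only difference is that you justify the trivialization explicitly via the branch-matching of Equation \eqref{eq:forgetful-map}, where the paper simply cites the corresponding triviality statement for $\widetilde{\M}_\Gamma$ from \cite{ACP}.
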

\begin{proof}
   The sheaf $\text{Eff}_{\overline{\M}_{g,n}^r(\mathcal{R})}$ is trivial on $W\times_{\M_{\Gamma}} \widetilde{\M}_{\Gamma}$, because it is trivial on $\widetilde{\M}_{\Gamma}$ (see \cite[proof of Propostion 7.2.1]{ACP}). In particular, this sheaf is trivial on every connected component of $W\times_{\M_{\Gamma}} \widetilde{\M}_{\Gamma}$, so the monodromy of $W$ coincides with the stabilizer of one (every) connected component. The result follows from Proposition \ref{prop:stabilizer}.
\end{proof}

The following is the main result of the section.
Recall Definition \ref{def:moduli-root}.
\begin{theorem}
\label{thm:tropj}
There is a morphism of extended generalized cone complexes
\[
\Phi_{\ol{\M}_{g,n}^r(\R)}\col \ol{\Sigma}(\ol{\M}_{g,n}^r(\R))\to \ol{\Roots}_{g}^{r,\trop}(\R),
\]
and a commutative diagram:
\begin{eqnarray*}
\SelectTips{cm}{11}
\begin{xy} <16pt,0pt>:
\xymatrix{
\ol{\M}_{g,n}^r(\R)^{\an} \ar@/^2pc/[rrrr]^{\trop_{\ol{\M}_{g,n}^r(\R)}} \ar[d]_{\pi^{\an}_{g,\mathcal R}} \ar[rr]^{{\bf p}_{\ol{\M}_{g,n}^r(\R)}\;}   
  && \ar[rr]^{{\Phi}_{\ol{\M}_{g,n}^r(\R)}} \ar[d]_{\ol{\Sigma}(\pi^{\an}_{g,\mathcal R})} \ol{\Sigma}(\ol{\M}_{g,n}^r(\R)) && \ol{\Roots}_{g}^{r,\trop}(\R)   \ar[d]_{\pi^{\trop}_{g,\mathcal R}} \\
\ol{\M}_{g,n}^{\an}  \ar@/_2pc/[rrrr]_{\trop_{\ol{\M}_{g,n}}} \ar[rr]^{{\bf p}_{\ol{ {\M}}_{g,n}}\;}            & &  \ar[rr]^{{\Phi}_{\ol{\M}_{g,n}}}         \ol{\Sigma}(\ol{{\M}}_{g,n})  && \ol{M}_{g,n}^{trop}  
 }
\end{xy}
\end{eqnarray*}
\end{theorem}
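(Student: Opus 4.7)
The plan is to construct $\Phi_{\ol{\M}_{g,n}^{r}(\R)}$ one cone at a time using the Jarvis local equations to pin down the map, and then verify the diagram by comparing coordinate formulas on each piece.

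First I would define $\Phi$ on each cone. Let $W$ be a stratum of $\ol{\M}_{g,n}^r(\R)$ of combinatorial type $(\Gamma,\vr)\in\flow_g(\ZZ/r\ZZ,\ol{\R})$. Equation \eqref{eq:local} identifies the completed local ring at a point of $W$ as a power series ring with boundary parameters $\{\tau_e\}_{e\in\Delta(\net)}\cup\{t_e\}_{e\notin\Delta(\net)}$, so evaluating a monoid homomorphism on these generators gives a canonical identification $\ol{\sigma}_W\cong\ol{\RR}_{\geq 0}^{E(\Gamma)}$. I will define $\Phi_{\ol{\M}_{g,n}^r(\R)}$ on $\ol{\sigma}_W$ to be the identity map $\ol{\RR}_{\geq 0}^{E(\Gamma)}=\ol{\kappa}_{(\Gamma,\vr)}$ composed with the quotient to $\ol{\Roots}_{g}^{r,\trop}(\R)$. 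To see that this assembles into a well-defined morphism of extended generalized cone complexes I need two compatibilities. Proposition \ref{prop:strata} handles face morphisms: when $W\subset\overline{W'}$ with combinatorial types $(\Gamma_W,\vr_W)$ and $(\Gamma_{W'},\vr_{W'})$, it provides a specialization $(\Gamma_W,\vr_W)\to(\Gamma_{W'},\vr_{W'})$ in $\flow_g(\ZZ/r\ZZ,\ol{\R})$, and the induced face morphism between tropical cones matches $\ol{\sigma}_{W'}\to\ol{\sigma}_W$ from the skeleton. Corollary \ref{cor:monodromy} handles monodromy: the monodromy of $W$ sits inside $\Aut(\Gamma,\vr)$, which is exactly the group by which one quotients to form $\kappa_{(\Gamma,\vr)}^\circ$ in $\Roots_g^{r,\trop}(\R)$.

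Next I would verify the top triangle $\Phi_{\ol{\M}_{g,n}^r(\R)}\circ{\bf p}_{\ol{\M}_{g,n}^r(\R)}=\trop_{\ol{\M}_{g,n}^r(\R)}$. For $\psi\col\Spec(R)\to\ol{\M}_{g,n}^r(\R)$ whose closed-point image lies in $W$, Equation \eqref{eq:p_retraction} says ${\bf p}_{\ol{\M}_{g,n}^r(\R)}(\psi)\in\ol{\sigma}_W$ is the monoid homomorphism sending each boundary parameter to its valuation under $\nu_R\circ\psi^\#$. Under the identification from the construction step, this is precisely the tuple $(x_e)$ of Equation \eqref{eq:trop_length_function1}, so $\Phi$ sends it to $\trop_{\ol{\M}_{g,n}^r(\R)}(\psi)$ by construction. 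Then I would verify the right square. On $\ol{\sigma}_W$, Equation \eqref{eq:forgetful-map} shows that $\ol{\Sigma}(\pi^{\an}_{g,\mathcal R})$ multiplies the $e$-coordinate by $s_e=r/\gcd(r,\vr(e))$ when $e\in\Delta(\net)$ and acts as the identity when $e\notin\Delta(\net)$; on the tropical side, Equation \eqref{eq:ye} shows that $\pi^{\trop}_{g,\mathcal R}$ multiplies the $e$-coordinate by $|r/\gcd(r,\vr(e))|$. Since $\vr(e)=0$ for $e\notin\Delta(\net)$, these two scalings coincide on every edge, so the right square commutes.

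Finally, the bottom triangle $\trop_{\ol{\M}_{g,n}}=\Phi_{\ol{\M}_{g,n}}\circ{\bf p}_{\ol{\M}_{g,n}}$ is the classical identification of \cite{ACP}, and the left square is Equation \eqref{prop:funct}. Pasting the four inner commutativities produces the full diagram, including the outer curved equality $\pi^{\trop}_{g,\mathcal R}\circ\trop_{\ol{\M}_{g,n}^r(\R)}=\trop_{\ol{\M}_{g,n}}\circ\pi^{\an}_{g,\mathcal R}$. I expect the main technical hurdle to be the construction step itself, namely proving that the cone-wise identity map glues into an honest morphism of generalized cone complexes; this requires that the poset of strata and the monodromy of each stratum of $\ol{\M}_{g,n}^r(\R)$ be captured precisely by $\flowp_g(\ZZ/r\ZZ,\ol{\R})$ and the groups $\Aut(\Gamma,\vr)$, which is exactly the content of Proposition \ref{prop:strata} and Corollary \ref{cor:monodromy}. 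With those ingredients in hand, the remaining verifications are clean coordinate computations.
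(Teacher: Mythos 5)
Your proposal is correct and follows essentially the same route as the paper: the cone-wise construction of $\Phi_{\ol{\M}_{g,n}^r(\R)}$ via the identification $\ol{\sigma}_W\cong\ol{\kappa}_{(\Gamma,\vr)}$ coming from Equation \eqref{eq:local}, glued using Proposition \ref{prop:strata} for faces and Corollary \ref{cor:monodromy} for monodromy, and the coordinate comparison of Equations \eqref{eq:trop_length_function1}, \eqref{eq:forgetful-map} and \eqref{eq:ye} for the commutativities. The only cosmetic difference is that you verify the right square directly and deduce the outer curved equality by pasting, whereas the paper checks the outer equality $\trop_{\ol{\M}_{g,n}}\circ\pi^{\an}_{g,\mathcal R}=\pi^{\trop}_{g,\mathcal R}\circ\trop_{\ol{\M}_{g,n}^r(\R)}$ directly; the underlying computation ($\nu_R(\psi^\#t_e)=s_e\nu_R(\psi^\#\tau_e)$ versus the factor $|r/\gcd(r,\vr(e))|$) is identical.
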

\begin{proof}
  Let $w=[C,\net]$ be a point in a stratum $W$ of $\ol{\M}_{g,n}^r(\R)$ of combinatorial type $(\Gamma,\vr)$. Then there is an isomorphism of monoids $\Eff_{\ol{\M}_{g,n}^r(\R),w}\to\mathbb{Z}_{\geq0}^{E(\Gamma)}$ via Equation \eqref{eq:local}, hence $\ol{\sigma}_W$ is isomorphic  to $\ol{\sigma}_{(\Gamma,\vr)}$. By \cite[Proposition 6.2.6]{ACP} and Corollary \ref{cor:monodromy}, we get a map 
\[
\ol{\Sigma}(\ol{\M}_{g,n}^r(\R))=\coprod_{W} \ol{\sigma}^\circ_W/H_W\to\coprod_{(\Gamma,\vr)}\ol{\sigma}^\circ_{(\Gamma,\vr)}/\Aut(\Gamma,\vr).
\]
Moreover, by Proposition \ref{prop:strata}, if $W$ and $W'$ are strata with combinatorial types $(\Gamma,\vr)$ and $(\Gamma',\vr')$, respectively, we have that $(\Gamma,\vr)\geq (\Gamma',\vr')$ in the poset $\flowp_g(\mathbb{Z}/r\mathbb{Z},\mathcal R)$  if, and only if,   $W$ is contained in the closure of $\overline{W'}$ in $\ol{\M}_{g,n}^r(\R)$. This induces a map of generalized cone complexes $\Phi_{\ol{\M}_{g,n}^r(\R)}$ from $\ol{\Sigma}(\ol{\M}_{g,n}^r(\R))$ to $\overline{\Roots}_{g}^{r,\trop}(\R)$.\par
    Let $\psi\col \Spec(R)\to \ol{\M}_{g,n}^r(\R)$ be a point in $\ol{\M}_{g,n}^r(\R)^{\an}$, with $R$ complete. Let $W$ be the stratum of $\ol{\M}_{g,n}^r(\R)$ containing the image $w$ via $\psi$ of the closed point in $\Spec(R)$. The set $E(\Gamma)$ can be seen as a monoid basis of the free monoid $\Eff_{\ol{\M}_{g,n}^r(\R),w}$. Write  $\trop_{\ol{\M}_{g,n}^r(\R)}(\psi)=[X,[\D_\vr]]\in\ol{\Roots}_{g}^{r,\trop}(\R)$ and let
    $(x_e)_{e\in E(\Gamma)}$ be the corresponding point in $\overline{\sigma}_{\Gamma,\vr}^{\circ}$. 
    
    By Equation \eqref{eq:p_retraction}, we have that ${\bf p}_{\ol{\M}_{g,n}^r(\R)}(\psi)$ is the point of $\overline{\sigma}^\circ_W$ given by the coordinates
    \[
    (\nu_R(\psi^\#(\tau_e)) , \nu_R(\psi^\#(t_{e'}))),
    \]
    for $e\in \Delta(\net)$ and $e'\in E(\Gamma)\setminus\Delta(\net)$.
    Clearly, this is the same point as the point $(x_e)_{e\in e(\Gamma)}$ given by Equation \eqref{eq:trop_length_function1}.
It follows that $\trop_{\ol{\M}_{g,n}^r(\R)}=\Phi_{\ol{\M}_{g,n}^r(\R)}\circ{\bf p}_{\ol{\M}_{g,n}^r(\R)}$.

The fact that the square in the left hand side of the diagram in the statement is commutative follows from Equation \eqref{prop:funct}.

Finally, we have that $\trop_{\ol{\M}_{g,n}}\circ\pi^{\an}(\psi)$ is the tropical curve $(\Gamma,\ell')$ where $\ell'(e)=\nu_R(\psi^\#t_e)$. On the other hand, $\pi^{\trop}\circ \trop_{\ol{\M}_{g,n}^r(\R)}(\psi)$ is the tropical curve $(\Gamma,\ell)$ where $\ell$ is as in Equation \ref{eq:trop_length_function}. Since $\nu_R(\psi^\#t_e)=s_e\nu_R(\psi^\#\tau_e)$ by Equation \eqref{eq:forgetful-map}, we deduce that $\ell'(e)=\ell(e)$ if $e\in \Delta(\net)$.  It is also clear that   $\ell'(e)=\ell(e)$ if $e\notin \Delta(\net)$, hence the tropical curves $(\Gamma,\ell')$ and $(\Gamma,\ell)$ are equivalent, thus
\[
\trop_{\ol{\M}_{g,n}}\circ\pi^{\an}_{g,\mathcal R}=\pi^{\trop}_{g,\mathcal R}\circ \trop_{\ol{\M}_{g,n}^r(\R)}.
\]
This concludes the proof.
\end{proof}

\begin{remark}
  Usually, the map ${\Phi}_{\ol{\M}_{g,n}^r(\R)}$ is not  an isomorphism, as there is still some combinatorial data missing to give a finer description of the stratification of $\overline{\M}_{g,n}^r(\R)$.  For the case  $\R=(0,\ldots,0,1)$, this is fixed by the sign function considered in \cite{CMP}.
\end{remark}

\bibliographystyle{amsalpha}
\bibliography{bibli.bib}

\medskip

\noindent{\small Alex Abreu \\
Universidade Federal Fluminense, Rua Prof. M. W. de Freitas, S/N\\ 
Niter\'oi, Rio de Janeiro, Brazil. 24210-201.\\
email: alex\_abreu@id.uff.br, alexbra1@gmail.com
}

\medskip

\noindent{\small Marco Pacini \\
Universidade Federal Fluminense, Rua Prof. M. W. de Freitas, S/N\\ 
Niter\'oi, Rio de Janeiro, Brazil. 24210-201.\\
email: marco\_pacini@id.uff.br, pacini.uff@gmail.com
}

\medskip

\noindent{\small Matheus Secco \\
Departamento de Matem\'atica, PUC-Rio, Rua Marqu\^es de S\~ao Vicente 225\\
Rio de Janeiro, Rio de Janeiro, Brazil. 22451-900. \\
email: matheussecco@mat.puc-rio.br, matheussecco@gmail.com}

\end{document}